\documentclass[12pt]{amsart}

\usepackage{amssymb,latexsym,amsthm, amsmath, comment, fullpage, wrapfig}
\usepackage[all]{xy}
\usepackage{graphicx, comment,  tikz-cd, libertine}

\ifx\pdfoutput\undefined
\usepackage{hyperref}
\else
\usepackage[pdftex,colorlinks=true,linkcolor=blue,urlcolor=blue]{hyperref}
\fi

\usepackage{mathtools,bm,booktabs,enumitem,microtype,longtable,array}
\usepackage[nameinlink,capitalise,noabbrev]{cleveref}

\hypersetup{
  citecolor=blue,
  pdftitle={Failure of entropic selection for diagonal matrix completion in deep linear networks},
  pdfauthor={Rodrigo Trevino},
  pdfsubject={Deep linear networks, matrix completion, entropy, free-energy gradient flow}
}

\setlist[itemize]{topsep=4pt,itemsep=2pt,parsep=1pt}
\setlist[enumerate]{topsep=4pt,itemsep=3pt,parsep=1pt}

\newtheorem{theorem}{Theorem}[section]
\newtheorem{proposition}[theorem]{Proposition}
\newtheorem{lemma}[theorem]{Lemma}
\newtheorem{corollary}[theorem]{Corollary}

\theoremstyle{definition}
\newtheorem{definition}[theorem]{Definition}

\theoremstyle{remark}
\newtheorem{remark}[theorem]{Remark}

\newcommand{\Md}{\mathbb{M}_d}
\newcommand{\GLd}{\mathrm{GL}_d(\mathbb{R})}

\newcommand{\diag}{\operatorname{diag}}

\newcommand{\grad}{\operatorname{grad}}
\newcommand{\Tr}{\operatorname{Tr}}
\newcommand{\rank}{\operatorname{rank}}
\newcommand{\Crit}{\operatorname{Crit}}
\newcommand{\sgn}{\operatorname{sgn}}
\newcommand{\Aop}{\mathcal{A}}
\newcommand{\Sn}{\mathcal{S}_N}
\newcommand{\cN}{c_N}
\newcommand{\eps}{\varepsilon}
\newcommand{\ip}[2]{\left\langle #1,#2\right\rangle_F}
\newcommand{\normF}[1]{\left\lVert #1\right\rVert_F}
\newcommand{\normg}[1]{\left\lVert #1\right\rVert_{g^N}}

\title{Failure of entropic selection for diagonal matrix completion in deep linear networks}
\author{Rodrigo Trevi\~no}
\date{\today}

\begin{document}

\begin{abstract}
We study the free-energy gradient flow associated with the depth-$N$ deep linear network on the space of invertible real $d\times d$ matrices.  The loss observes only the diagonal,
\[
E_d(W)=\frac12\sum_{i=1}^d(W_{ii}-1)^2,
\]
and the regularizer is the Boltzmann entropy of the balanced factorization fiber computed by Menon and Yu.  This is a natural higher-dimensional version of a matrix-completion problem posed by Menon as a test of whether entropy selects among a noncompact family of minimizers.

We first isolate the general obstruction to such a selection principle.  Up to a uniformly bounded term, the entropy is a logarithmic sum of the maximal exterior-volume expansions of $W$.  This yields an exact criterion for boundedness of $E-\beta^{-1}S_N$ in terms of the maximum entropy on energy sublevel sets, as well as practical sufficient conditions based on logarithmic, superlogarithmic, and polynomial growth of the energy.  We prove that the entropy tends to $+\infty$ along every nontrivial affine ray through a full-rank matrix.  Consequently, for a squared linear matrix-sensing loss, the free energy is bounded below if and only if the observation operator is injective; in the injective case it is coercive in norm.  We also distinguish norm coercivity from attainment on the open full-rank manifold and give a compactness criterion under which low-temperature minimizers select entropy maximizers on the minimizing set of the energy.

For diagonal completion, every width $d\geq2$, depth $N>2$, and inverse temperature $\beta>0$, we prove that the free energy is unbounded below and has exactly $2^d$ full-rank critical points, one in each diagonal sign chamber.  Every critical point is diagonal and hyperbolic.  Its unstable dimension is $\binom d2$ and its stable dimension is $d(d+1)/2$.  We also prove that a full-rank trajectory cannot converge to a finite rank-deficient matrix.  Corank at least two is excluded by an infinite free-energy barrier, while the corank-one boundary is repelling because the entropy produces a positive linear drift in the smallest singular value.  The flow is forward complete.  Every bounded orbit converges to one of the $2^d$ saddles, and the union of their stable sets has Lebesgue measure zero.  Consequently, almost every full-rank initial condition has an unbounded forward orbit.

To separate this failure from the low-rank selection displayed by ordinary training, we also analyze the unregularized flow at width two.  No fixed full-rank initial condition can have a nonzero rank-one completion as its terminal state.  Nevertheless, for explicit scaled families $W_0^{\eps}=\eps\widehat W_0$ with fixed full-rank $\widehat W_0$, the terminal matrix remains full rank for every sufficiently small $\eps>0$ but converges to a rank-one completion as $\eps\to0$.

Thus finite-temperature fiber entropy does not provide an equilibrium selection principle for diagonal completion, or more generally for any genuinely underdetermined linear matrix-sensing problem on the unrestricted matrix space.
\end{abstract}

\keywords{Deep linear networks, matrix completion, implicit regularization, Boltzmann entropy, Riemannian gradient flow, stable manifolds}
\subjclass[2020]{37N40, 53C20, 15A18, 68T07}

\maketitle
\newpage
\setcounter{tocdepth}{1}
\tableofcontents

\section{Introduction}

A deep linear network represents a matrix $W$ as a product of $N$ trainable factors.  On balanced factorizations, Euclidean gradient flow in factor space projects to a Riemannian gradient flow for the end-to-end matrix.  The metric depends on the depth and can therefore alter which minimizer is reached when a loss has a degenerate minimizing set.  This provides a finite-dimensional model for implicit regularization in overparametrized learning; see \cite{AroraCohenHazan2018,BahEtAl2022,CohenMenonVeraszto2023,Menon2025}.

Menon and Yu introduced a Boltzmann entropy $S_N$ for the deep linear network by taking the logarithm of the volume of the balanced parameter-space fiber over an end-to-end matrix \cite{MenonYu2026}.  Given a loss (or energy) $E$ and inverse temperature $\beta>0$, this leads to the free energy
\begin{equation*}
F_\beta(W)=E(W)-\frac1\beta S_N(W),
\end{equation*}
where $S_N(W)$ is the entropy, and to the Riemannian gradient flow
\begin{equation}
  \label{eq:gradFlow}
\dot W=-\grad_{g^N}F_\beta(W).
\end{equation}
For spectral confining losses, Chen, Kotwal, and Menon characterize the equilibria and dynamics of this flow and prove strict concavity of the entropy in singular-value coordinates \cite{ChenKotwalMenon2025}.  The nonspectral case, including matrix completion, is explicitly left open there.

The motivating completion loss is
\begin{equation}\label{eq:completion-loss-intro}
E_d(W)=\frac12\sum_{i=1}^d(W_{ii}-1)^2.
\end{equation}
Its minimizers form the noncompact affine space of matrices with diagonal equal to $1$.  Menon posed the $2\times2$ version both as a low-rank selection problem for the ordinary deep-linear flow and as a test of whether the free energy selects among degenerate minimizers \cite[Sections 14.1--14.2]{Menon2025}.  Numerical work on the unregularized flow shows attraction toward low-rank completions in small examples \cite{CohenMenonVeraszto2023}.

The main conclusion of this paper is that the free-energy minimization proposal fails structurally for \eqref{eq:completion-loss-intro}.  The free energy has no minimizer, all of its finite full-rank equilibria are saddles, finite rank-deficient limits are impossible, and generic trajectories are unbounded.

To separate this finite-temperature failure from the low-rank behavior of ordinary training, \cref{sec:zero-temperature} analyzes the unregularized flow at width two.  We prove that a full-rank trajectory cannot converge exactly to a nonzero rank-one completion.  We then exhibit scaled full-rank initializations $W_0^{\eps}=\eps\widehat W_0$, with fixed full-rank $\widehat W_0$, whose terminal matrices are full rank for every sufficiently small $\eps>0$ but converge to a rank-one completion as $\eps\to0$.  On a symmetric invariant plane the endpoint can be computed exactly: at every depth $N>2$ the terminal smaller singular value is asymptotic to an explicit positive multiple of $\eps$, while at depth two the mixed-sign family has an explicit positive multiple of $\eps^2$ and the same-sign family retains a nondegenerate rank-two limit.  We also prove persistence under a nonlinear nonsymmetric equal-diagonal perturbation; when the initial singular-frame mismatch is nonzero, the difference of the terminal off-diagonal entries is asymptotic to an explicit nonzero multiple of $\eps^{2/N}$.  The same-sign symmetric-plane endpoints recover, in width two, a recent result of Shin and Yun \cite{ShinYun2026}; see \cref{rem:SY-correspondence}.

The failure is not caused merely by degeneracy of the minimizing set.  A compact family of minimizers can support a genuine low-temperature selection principle: provided minimizers of $F_\beta$ exist and remain precompact, their limit points maximize the entropy on the set of energy minimizers.  What is fatal here is the combination of noncompact low-energy directions and unbounded entropy along those directions.

The entropy growth is weak in one sense and strong in another.  It is only logarithmic in the size of the matrix, so any superlogarithmically confining energy dominates it.  But it diverges along every nontrivial affine ray through a full-rank matrix.  Therefore any loss that is bounded on such a ray has free energy unbounded below.  This applies in particular to every noninjective squared linear matrix-sensing problem on the unrestricted matrix space.  In \cref{sec:general-energy} we prove the sharp dichotomy
\[
\frac12\|\mathcal L(W)-y\|_2^2-\frac1\beta S_N(W)
\text{ is bounded below}
\mbox{ if and only if }
\mathcal L\text{ is injective}.
\]
When $\mathcal L$ is injective, the quadratic sensing loss dominates the logarithmic entropy and the free energy is coercive in norm.  We also formulate an exact criterion in terms of the entropy available on energy sublevel sets and explain what additional control near the rank boundary is needed for a minimizer to be attained inside $\GLd$.

\subsection{Main results}

We reserve $d$ for the matrix width and $N$ for the network depth.  The entropy $S_N$ is defined precisely in \cref{sec:setup}. The following are the main results of the paper.
\begin{theorem}
  \label{thm:main}
Fix $d\geq2$, an integer depth $N>2$, and $\beta>0$.  Let
\[
F_\beta(W)=\frac12\sum_{i=1}^d(W_{ii}-1)^2-\frac1\beta S_N(W),
\qquad W\in\GLd,
\]
and let $W(t)$ solve the depth-$N$ free-energy flow \eqref{eq:gradFlow}.  Then the following statements hold.

\begin{enumerate}[label=\textup{(\roman*)}]
\item The free energy is unbounded below:
\[
\inf_{W\in\GLd}F_\beta(W)=-\infty.
\]

\item There are exactly $2^d$ critical points in $\GLd$.  Every critical point is diagonal.  More precisely, for each sign vector
\[
\epsilon=(\epsilon_1,\ldots,\epsilon_d)\in\{-1,1\}^d
\]
there is a unique $x^\epsilon\in(0,\infty)^d$ such that
\[
W_\epsilon=\diag(\epsilon_1x_1^\epsilon,\ldots,\epsilon_dx_d^\epsilon)
\]
is critical, and these are all the critical points.

\item Every $W_\epsilon$ is a nondegenerate saddle. In addition, the stable and unstable spaces of the linearized free-energy flow satisfy
\[
\dim E^s=\frac{d(d+1)}2,
\qquad
\dim E^u=\binom d2.
\]

\item No solution starting in $\GLd$ can converge to a finite rank-deficient matrix.

\item Every full-rank solution is defined for all $t\geq0$ and remains in its initial connected component of $\GLd$.

\item Every bounded forward orbit converges to one of the $2^d$ critical points.  The set of initial conditions with bounded forward orbit has Lebesgue measure zero in $\Md$.  Consequently, for Lebesgue-almost every $W_0\in\GLd$,
\[
\sup_{t\geq0}\normF{W(t)}=\infty.
\]
\end{enumerate}
\end{theorem}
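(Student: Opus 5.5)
The proof rests on the explicit structure of the Menon--Yu entropy and of the metric $g^N$, both of which depend only on the singular values. I take from \cref{sec:setup} the formula
\[
S_N(W)=c_N+\tfrac{N-1}{2}\sum_i\log\sigma_i+\sum_{i<j}\log\frac{\sigma_i^{2/N}-\sigma_j^{2/N}}{\sigma_i^{2}-\sigma_j^{2}}\cdot(\text{const}),
\]
or its equivalent as a logarithmic sum of maximal exterior-volume expansions up to a bounded term. I also use the bi-orthogonal invariance of $g^N$.

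Part (i) follows from the general ray result. Take $W_0=I$ and the ray $I+tE_{12}$. The energy is identically zero along it. The largest singular value grows like $t$ while the determinant stays equal to $1$, so the entropy, viewed as a sum of logarithms of exterior-volume expansions, tends to $+\infty$. Hence $F_\beta\to-\infty$ along the ray.

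For part (ii) I compute $\grad E_d=\mathcal A_{N,W}(\diag(W_{ii}-1))$, where $\mathcal A_{N,W}$ is the inverse metric operator, and $\grad S_N$. Since $S_N$ is spectral, $\grad S_N=U\,\diag(\phi(\sigma))V^T$ in the singular frame $W=U\Sigma V^T$. The critical point equation $\beta\,\diag(W_{ii}-1)=\mathcal A^{-1}\grad S_N$ then says that the off-diagonal part of a spectral, frame-equivariant quantity vanishes.

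I would first show that any critical $W$ commutes appropriately with its diagonal projection. The step I expect to be hardest is showing it is diagonal. The plan there is to write the Euclidean derivative as $dS_N=U\,\diag(\psi(\sigma))V^T$, so that the equation becomes $U\,\diag(\psi)V^T=\beta\,D$ with $D$ diagonal. Then $D$ has singular values $|\psi_i|/\beta$, and $D^TD$ commutes with $W^TW$. For $N>2$, the strict monotonicity or concavity result of Chen--Kotwal--Menon should make $\psi$ injective on distinct singular values. From this I would derive that $W$ is diagonal up to signed permutation, and then diagonal itself because the diagonal of $W$ has entries of the same sign pattern as $D$.

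On diagonal matrices the equation decouples into $\beta x_i(x_i\epsilon_i-1)\epsilon_i=-\psi_i(x)$. I would solve it by showing that the restricted free energy on each sign chamber of diagonal matrices is strictly convex and coercive. Coercivity holds because the entropy is $\sim\log$ near $0$, with positive coefficient pushing away, while the quadratic term dominates at infinity. This gives a unique $x^\epsilon$.

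For part (iii) I linearize at $W_\epsilon$ and split the tangent space into diagonal directions and off-diagonal $2\times2$ blocks $(E_{ij},E_{ji})$. On the diagonal, the Hessian is positive definite by the convexity just established, giving $d$ stable directions. On each pair block, the energy contributes nothing. The entropy Hessian in the block has the form $\begin{pmatrix}a&b\\b&a\end{pmatrix}$, and for $N>2$ a computation gives eigenvalues of opposite sign. The sign-indefinite piece comes from the growth of $\log(\sigma_i-\sigma_j)$-type terms. This yields one stable and one unstable direction per pair, so $\dim E^s=d+\binom d2$ and $\dim E^u=\binom d2$. Because $g^N$ is positive definite, hyperbolicity and these indices transfer from the Hessian to the linearized flow.

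For parts (iv) and (v), along the flow $F_\beta$ is nonincreasing. If $\sigma_{d-1}\to0$, then $S_N\to-\infty$ with an infinite barrier, so corank $\ge2$ is excluded. For corank one, I compute $\frac{d}{dt}\sigma_d\ge c>0$ near $\sigma_d=0$ on bounded sets, using the positive linear entropic drift. This rules out convergence to a rank-deficient limit, and it also shows the solution never reaches $\det=0$ in finite time, so it remains in its component. Finite-time blowup is excluded by a Grönwall bound on $\normF{W}$, since the entropy gradient has at most linear growth in the metric.

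For part (vi), a bounded orbit has an $\omega$-limit set that is compact and, by (iv), contained in $\GLd$. It consists of critical points by the LaSalle/Łojasiewicz argument for analytic gradient flows, so the orbit converges to some $W_\epsilon$. The set of initial conditions converging to $W_\epsilon$ is its global stable manifold, of dimension $d(d+1)/2<d^2$. It is a countable union of images of the local stable manifold under the flow, hence Lebesgue-null. Taking the union over the $2^d$ saddles completes the proof.
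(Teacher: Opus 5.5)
Your proposal has two genuine gaps, one in the forward completeness of (v) and one in the existence half of (ii). A few smaller points follow at the end.

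\textbf{Forward completeness.} The Gr\"onwall step fails as written. A Gr\"onwall bound on $\normF{W}$ needs the whole vector field to grow at most linearly, and you checked only the entropy part. The energy part $-\Aop_{N,W}(E_d'(W))$ has Frobenius norm up to $N\sigma_1^{2-2/N}\normF{E_d'(W)}$. You can use the decrease of $F_\beta$ and the logarithmic entropy bound to get $\normF{E_d'(W)}^2=2E_d(W)\leq C+C\log(1+\normF{W})$. Even so, you only reach $\frac{d}{dt}\normF{W}\lesssim\normF{W}^{2-2/N}\sqrt{\log(2+\normF{W})}$. This is superlinear for $N>2$ and does not exclude finite-time blowup.

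The missing idea is to measure radial growth in the metric $g^N$ itself. One has $\bigl|\frac{d}{dt}\sigma_1^{1/N}\bigr|\leq N^{-1/2}\normg{\dot W}$ (\cref{lem:radial-length}). By Cauchy--Schwarz and the dissipation identity, the squared $g^N$-length on $[0,T]$ is at most $T\bigl(F_\beta(W(0))-F_\beta(W(T))\bigr)$. Up to the first time $\sigma_1=R$, this is at most $T\bigl(C_0+C_1\log(1+R)\bigr)$. This forces $R^{1/N}\lesssim\sqrt{T\log R}$, which is the contradiction used in \cref{prop:no-finite-infinity}.

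\textbf{Existence in each sign chamber.} Your coercivity claim is false for the actual entropy $S_N=\sum_{i<j}q_N(\sigma_i,\sigma_j)$. The term $\frac{N-1}{2}\sum_i\log\sigma_i$ in the formula you wrote is not present. As a result, $\Phi_\epsilon$ blows up only when two coordinates vanish. At a face with exactly one zero coordinate it has a finite limit (\cref{prop:entropy-boundary-general}). Strict convexity then only gives a minimizer on the closed orthant, possibly on such a face. At depth two, where the inward slope is finite, this really happens, for instance with $d=3$, $\epsilon=(1,1,-1)$, $\beta>1$.

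What excludes the face when $N>2$ is the asymptotic $\partial_bq_N(a,b)\sim\frac1Na^{-2/N}b^{2/N-1}$. It gives $g_i(x)\to+\infty$ as $x_i\downarrow0$, so the inward slope of $\Phi_\epsilon$ is $-\infty$ (\cref{lem:boundary-derivative,lem:Phi-boundary}).

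\textbf{Smaller points.}
\begin{itemize}
\item The same asymptotic shows the corank-one drift is $\dot\sigma_d\geq c\,\sigma_d$, not $\dot\sigma_d\geq c>0$. The weaker bound still suffices.
\item In (vi), statement (iv) alone does not put the $\omega$-limit set inside $\GLd$: it excludes convergence to the rank boundary, not accumulation on it. What you need is this barrier, uniform on bounded sets once $\sigma_{d-1}\geq m$ is secured by \cref{lem:entropy-lower-trajectory,lem:two-small}; see \cref{prop:bounded-precompact}.
\item Your diagonality step closes more directly than through ``diagonal up to signed permutation.'' That intermediate claim does not follow from the commutation relations when singular values repeat. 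Instead, write the polar decompositions $W=QH$ and $S_N'(W)=QG$. The critical equation gives $G=\beta(D^TD)^{1/2}$, which is diagonal. Hence $Q=\beta DG^{-1}$ is a diagonal sign matrix. Your commutation argument, together with the strict reverse ordering of $g$, then makes $H$ diagonal, as in \cref{thm:critical-diagonal}.
\end{itemize}
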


The theorem does not assert that $\normF{W(t)}$ has a limit or tends monotonically to infinity.  It also does not exclude an unbounded trajectory for which some singular values tend to zero while others diverge.  We call this possible behavior \emph{singular escape at infinity}.  The theorem excludes only convergence to a finite singular matrix.

\subsection{Mechanism and proof strategy}

Four elementary mechanisms drive the proof.

First, a zero-loss shear has one singular value tending to infinity.  The entropy grows logarithmically along this family, so the free energy tends to $-\infty$.

Second, strict concavity of the entropy in singular-value coordinates forces the polar factor of a critical matrix to align with the coordinate axes.  This makes every critical point diagonal.  On each diagonal sign chamber the free energy becomes a strongly convex function of the positive diagonal magnitudes, giving exactly one point per sign vector.

Third, the full matrix Hessian is indefinite at each of these diagonal points.  Every unordered coordinate pair contributes one stable and one unstable off-diagonal mode.  The $d$ diagonal modes are stable.  This produces a Morse index $\binom d2$.

Fourth, the entropy has two different effects near the rank boundary.  If at least two singular values vanish, the entropy tends to $-\infty$, so the free energy tends to $+\infty$ and cannot be approached by a descending trajectory.  If exactly one singular value vanishes, the entropy instead creates a positive drift of order $\sigma_d$, while the energy contribution is of order $\sigma_d^{2-2/N}=o(\sigma_d)$ when $N>2$.  The corank-one boundary is therefore repelling.

The remainder of the paper makes these points precise and then uses the Lyapunov identity, compactness, LaSalle's principle, and the stable-manifold theorem to obtain generic unboundedness.

\subsection{Relation to prior work and scope}

Implicit regularization in matrix factorization has been studied from several complementary viewpoints.  At depth two, nuclear-norm behavior was identified under important symmetry and commuting-measurement assumptions by Gunasekar et al.\ \cite{GunasekarEtAl2017}.  Arora et al.\ studied the additional bias created by deeper factorizations and emphasized that it is not generally captured by a fixed norm \cite{AroraCohenHuLuo2019}.  Cohen, Menon, and Veraszto developed the Riemannian description of deep-linear matrix completion and supplied rigorous and numerical evidence relating its geometry to low-rank outcomes \cite{CohenMenonVeraszto2023}.  More recently, Shin and Yun studied the factor gradient flow started from a balanced deterministic family of initializations under block-diagonal observations, derived exact equations for the limiting singular values, and showed that at depth at least three the stable rank of the limit tends to one as the initialization scale tends to zero, in contrast with depth two \cite{ShinYun2026}.  In width two their diagonal case coincides with the symmetric-plane analysis of \cref{sec:zero-temperature}; see \cref{rem:SY-correspondence}.

All of those low-rank statements concern the ordinary, unregularized training flow, which is the formal $\beta=\infty$ limit of the system studied here.  Menon and Yu subsequently computed the balanced-fiber entropy \cite{MenonYu2026}, and Chen, Kotwal, and Menon analyzed the resulting free energy for spectral losses, proving in particular the strict concavity theorem used below \cite{ChenKotwalMenon2025}.  They explicitly left nonspectral losses such as matrix completion open.

Our principal contribution concerns the finite-$\beta$ free-energy flow for the simplest diagonal observation pattern.  Its conclusions do not contradict low-rank selection by the ordinary flow.  Rather, they show that an arbitrarily small but fixed entropic drift changes the global long-time problem.  The width-two analysis in \cref{sec:zero-temperature} makes the contrast explicit: at $\beta=\infty$, there are scaled full-rank families whose terminal smaller singular value is asymptotic to an explicit positive multiple of $\eps$ for $N>2$ and whose terminal matrices converge to rank-one completions as $\eps\to0$, while at every fixed finite $\beta$ the generic orbit is unbounded.  We work throughout with the reduced balanced dynamics and use the full-rank fiber entropy only in the finite-temperature problem.

\vspace{.1in}

\noindent \textbf{Tool and computational resource disclosure:} Following the Leiden Declaration's recommendations, I am disclosing that ChatGPT Sol and Claude Fable were used in the preparation of this paper.

\vspace{.1in}

\noindent \textbf{Acknowledgements:} This work was supported by a Career Transition Grant from Coefficient Giving.

\section{Deep-linear geometry and the entropy}\label{sec:setup}
Most of the material in this section can be gathered from the great survey \cite{Menon2025}.
\subsection{The balanced end-to-end flow}
Let $\Md$ denote the real $d\times d$ matrices, equipped with the Frobenius inner product
\[
\ip{P}{Q}=\Tr(P^TQ).
\]
A depth-$N$ deep linear network is an $N$-tuple $(W_N,\ldots,W_1)$ representing the end-to-end matrix
\[
W=W_NW_{N-1}\cdots W_1.
\]
Balanced factorizations satisfy
\[
W_{p+1}^TW_{p+1}=W_pW_p^T,
\qquad p=1,\ldots,N-1.
\]
They form invariant manifolds for factor-space gradient flow.  On the full-rank balanced manifold, the end-to-end dynamics close on $\GLd$ and are determined by the positive self-adjoint operator
\begin{equation*}
\Aop_{N,W}(P)
=
\sum_{k=1}^N
(WW^T)^{\frac{N-k}{N}}
P
(W^TW)^{\frac{k-1}{N}}.
\end{equation*}
For a smooth function $H:\GLd\to\mathbb R$, the corresponding Riemannian gradient is
\begin{equation*}
\grad_{g^N}H(W)=\Aop_{N,W}(H'(W)),
\end{equation*}
where $H'(W)$ is the Euclidean gradient.  These facts follow from the balanced reduction and Riemannian-submersion results in \cite{BahEtAl2022,Menon2025,MenonYu2026}.

If
\[
W=U\Sigma V^T,
\qquad
\Sigma=\diag(\sigma_1,\ldots,\sigma_d),
\qquad
\sigma_i>0,
\]
then the singular frame $u_iv_j^T$ diagonalizes $\Aop_{N,W}$:
\begin{equation}\label{eq:A-spectrum}
\Aop_{N,W}(u_iv_j^T)=a_{ij}(\sigma)u_iv_j^T,
\end{equation}
where
\begin{equation*}
a_{ii}(\sigma)=N\sigma_i^{2-2/N}
\end{equation*}
and, for $i\neq j$,
\begin{equation}\label{eq:aij-off}
a_{ij}(\sigma)
=
\frac{\sigma_i^2-\sigma_j^2}
{\sigma_i^{2/N}-\sigma_j^{2/N}}.
\end{equation}
The quotient in \eqref{eq:aij-off} is interpreted by continuity when $\sigma_i=\sigma_j$.  It is positive.  Thus $\Aop_{N,W}$ is positive definite on $\GLd$. The Riemannian metric is
\begin{equation*}
g_W^N(P,Q)=\ip{P}{\Aop_{N,W}^{-1}(Q)}.
\end{equation*}
Given the free energy $F_\beta$, the flow studied in this paper is therefore
\begin{equation}\label{eq:free-flow}
\dot W
=-\Aop_{N,W}\bigl(F_\beta'(W)\bigr)
=-\Aop_{N,W}\bigl(E_d'(W)\bigr)
+\frac1\beta\Aop_{N,W}\bigl(S_N'(W)\bigr).
\end{equation}

\subsection{The Boltzmann entropy}

Menon and Yu compute the volume of the balanced factorization fiber over a full-rank matrix \cite{MenonYu2026}.  Up to an additive constant depending only on $N$ and $d$, the resulting entropy is
\begin{equation}\label{eq:entropy-matrix}
S_N(W)=\Sn(\sigma(W)),
\end{equation}
where
\begin{equation*}
\Sn(\sigma_1,\ldots,\sigma_d)
=
\frac12\sum_{1\leq i<j\leq d}
\log
\frac{\sigma_i^2-\sigma_j^2}
{\sigma_i^{2/N}-\sigma_j^{2/N}}.
\end{equation*}
The expression extends real-analytically across positive repeated singular values \cite[Theorem 1.8]{ChenKotwalMenon2025}.  Since it is symmetric in the singular values, it defines a smooth orthogonally invariant function on $\GLd$.

The diagonal-completion energy and its Euclidean gradient are
\begin{equation*}
E_d(W)=\frac12\sum_{i=1}^d(W_{ii}-1)^2,
\qquad
E_d'(W)=\diag(W_{11}-1,\ldots,W_{dd}-1).
\end{equation*}
The free energy is
\begin{equation*}
F_\beta(W)=E_d(W)-\frac1\beta S_N(W).
\end{equation*}

\begin{remark}[Rank-deficient strata]\label{rem:rank-strata}
The entropy formula \eqref{eq:entropy-matrix} is the full-rank fiber entropy.  When we take limits toward a rank-deficient matrix, we study the boundary behavior of this full-rank function and of the full-rank flow.  We do not identify that boundary value with the volume of a fiber in a lower-rank balanced stratum, whose dimension and normalization differ.
\end{remark}
The following entropy result of Chen, Kotwal, and Menon will be used in the equilibrium analysis below.
\begin{theorem}[Strict concavity in singular-value coordinates \cite{ChenKotwalMenon2025}]\label{thm:strict-concavity}
If $N>2$ and $d\geq2$, then
\[
\nabla^2\Sn(\sigma)\prec0
\qquad
\text{for every }\sigma\in(0,\infty)^d.
\]
More generally, strict concavity holds for every $(N,d)\neq(2,2)$.
\end{theorem}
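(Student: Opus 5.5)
Since $\Sn$ is a sum of pairwise terms, the plan is to reduce the statement to a two-variable computation and then assemble the pairs. Write
\[
\Sn(\sigma)=\tfrac12\sum_{i<j}\psi(\sigma_i,\sigma_j),\qquad
\psi(x,y)=\log\frac{x^2-y^2}{x^{2/N}-y^{2/N}} .
\]
The Hessian quadratic form of $\Sn$ at $\sigma$, evaluated on $v\in\mathbb R^d$, is $\tfrac12\sum_{i<j}Q_{ij}(v_i,v_j)$, where $Q_{ij}$ is the Hessian form of $\psi$ at $(\sigma_i,\sigma_j)$. It therefore suffices to prove that $\psi$ is strictly concave on $(0,\infty)^2$ when $N>2$. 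Then each $Q_{ij}$ is negative definite in $(v_i,v_j)$. Since $d\ge2$, every index lies in some pair, so vanishing of the sum forces every $v_i=0$, and hence $\nabla^2\Sn\prec0$.

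To handle $\psi$, I would substitute $p=x^{2/N}$ and $q=y^{2/N}$. The ratio then becomes the positive polynomial $\sum_{k=0}^{N-1}p^kq^{N-1-k}$, which shows at once that $\psi$ is real-analytic across $x=y$ and symmetric. Two structural facts cut the $2\times2$ Hessian check down to one scalar inequality.

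\begin{itemize}
\item \emph{Homogeneity.} The function $\phi=e^\psi$ is homogeneous of degree $\alpha=2-2/N$, so $\psi(\lambda x,\lambda y)=\psi(x,y)+\alpha\log\lambda$. The second derivative of $\psi$ in the radial direction $(x,y)$ is therefore $-\alpha<0$, so the Hessian always has one negative direction.
\item \emph{Determinant.} Strict concavity is then equivalent to $\det\nabla^2\psi>0$. Write $\psi=\alpha\log x+g(t)$ with $t=y/x\in(0,1]$ (by symmetry) and $g(t)=\log\frac{1-t^2}{1-t^{2/N}}$. A direct computation expresses $\det\nabla^2\psi$, up to the positive factor $x^{-4}$, as a combination of $\alpha$, $g'(t)$ and $g''(t)$.
\end{itemize}

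This leaves a one-variable inequality on $(0,1]$, which I would state in the form
\[
\alpha\,\bigl(t^2g''(t)+tg'(t)\bigr)+\ldots<0,
\]
with the omitted terms filled in by the computation. Equivalently, $g(e^{-s})$ satisfies a differential inequality of the type ``$\alpha h''<-(h')^2$''.

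The main obstacle is proving this scalar inequality uniformly in $t$ and $N$. My first attempt would work in the variables $p,q$ and use the log-moment structure of $\sum_k p^kq^{N-1-k}$. In logarithmic coordinates $u=\log p$, $w=\log q$, the Hessian of $\log\sum_k e^{ku+(N-1-k)w}$ is the variance of a discrete distribution on $\{0,\dots,N-1\}$, and it is convex. Converting from logarithmic back to linear coordinates subtracts a diagonal term proportional to $\tfrac{2}{N}$ times the gradient. The task is then a variance bound of the shape
\[
\operatorname{Var}_\mu(k)\;<\;\frac{N}{2}\,\mathbb E_\mu[k]\,\mathbb E_\mu[N-1-k]\cdot\frac{\text{const}}{N-1}
\]
for the geometric weights $\mu_k\propto r^k$ with $r=p/q$. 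This bound can be checked explicitly because the moments have closed forms in $r$.

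The boundary cases serve as sanity checks. At $N=2$ the weights are uniform on $\{0,1\}$, $\psi=\log(x+y)$, and the determinant vanishes identically along $(1,-1)$, which is exactly the $(2,2)$ exception. For $N=2$ and $d\ge3$, the degenerate directions $e_i-e_j$ of the individual pairs have no common nonzero vector in the sum, which recovers the general statement for $(N,d)\neq(2,2)$.
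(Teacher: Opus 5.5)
The paper does not prove this theorem; it quotes it from Chen--Kotwal--Menon. Your argument therefore has to stand on its own.

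Your reductions are correct. The pairwise decomposition is right, and the radial second derivative equals $-\alpha$, so negative definiteness of each pair Hessian is equivalent to $\det\nabla^2\psi>0$. The determinant you left unfilled is simple. With $\psi=\alpha\log x+g(y/x)$,
\[
x^4\det\nabla^2\psi=-\alpha g''(t)-g'(t)^2 ,
\]
so the condition is strict concavity of $t\mapsto e^{g(t)/\alpha}$. In the variable $h(s)=g(e^{-s})$ it reads $\alpha(h''+h')+h'^2<0$, not $\alpha h''<-(h')^2$. Your log-moment translation is also right, and the unspecified constant is $1$: the condition is exactly
\[
\operatorname{Var}_\mu(k)<\frac{N}{2(N-1)}\,\mathbb{E}_\mu[k]\,\mathbb{E}_\mu[N-1-k],\qquad \mu_k\propto r^k,\ 0\le k\le N-1 .
\]
Your $N=2$ remarks are correct too. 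For $d\ge3$, the relations $v_i+v_j=0$ on every pair force $v=0$ through any triple of indices.

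\textbf{The gap.} The proposal stops exactly where the content of the theorem lies. Saying the bound ``can be checked explicitly because the moments have closed forms in $r$'' is not a proof. Substituting $\mathbb{E}_\mu[k]=\frac{r}{1-r}-\frac{Nr^N}{1-r^N}$ and the corresponding variance formula gives a two-parameter inequality in $(r,N)$ with no visible sign structure. Nothing in the proposal shows why it holds uniformly.

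\textbf{A way to close it.} Put $n=N-1$ and $v(s)=\sum_{k=0}^{n}s^k$, and clear denominators. With $m=sv'/v$, the inequality becomes $P(s)>0$ for
\[
P(s)=(n-1)s\,v'(s)^2-2n\,s\,v(s)v''(s)+n(n-1)\,v(s)v'(s)
=\frac{v(s)^2}{s}\Bigl[(n+1)\,m\,(n-m)-2n\operatorname{Var}_\mu(k)\Bigr].
\]
\begin{enumerate}
\item The symmetry $s\mapsto 1/s$, which swaps $k$ and $n-k$, makes $P$ palindromic of degree $2n-2$.
\item Expanding the products, for $1\le\ell\le n$ the coefficient of $s^{\ell-1}$ is $\frac{\ell(\ell+1)}{6}\bigl(3n^2+1-(3n+1)\ell\bigr)$. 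This is positive for $\ell<n$. Only the middle coefficient, $-\frac{(n-1)n(n+1)}{6}$, is negative.
\item Pairing symmetric terms and using $s^i+s^{2n-2-i}\ge 2s^{n-1}$ gives $P(s)\ge s^{n-1}P(1)$.
\item The uniform case gives $P(1)=\frac{(n-1)n^2(n+1)^2}{12}$. This is the inequality $\frac{n(n+2)}{12}<\frac{n(n+1)}{8}$, which holds exactly when $N>2$.
\end{enumerate}
With this step supplied, your outline becomes a complete, self-contained proof of the statement.
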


We also use the standard differential calculus of singular-value functions.  If $f$ is a smooth symmetric function on $(0,\infty)^d$ and
\[
\Phi(W)=f(\sigma(W)),
\]
then at an SVD $W=U\Sigma V^T$,
\begin{equation}\label{eq:spectral-gradient}
\Phi'(W)=U\diag(\partial_1f,\ldots,\partial_df)V^T.
\end{equation}
At repeated singular values, symmetry makes the right-hand side independent of the chosen singular basis.  We give the second-order formula needed later in \cref{sec:hessian}.  General treatments of this calculus include \cite{Lewis1996,LewisSendov2001,DrusvyatskiyKempton2015}.

\section{Pairwise entropy calculus}\label{sec:pairwise}

The entropy is a sum of two-variable terms.  It is useful to isolate them.

\begin{definition}\label{def:qN}
For $a,b>0$, define
\begin{equation}\label{eq:qN}
q_N(a,b)
=
\frac12\log
\frac{a^2-b^2}{a^{2/N}-b^{2/N}},
\end{equation}
with the analytic continuation at $a=b$.  Put
\[
\cN=1-\frac1N.
\]
Then
\[
\Sn(\sigma)=\sum_{i<j}q_N(\sigma_i,\sigma_j).
\]
\end{definition}

\begin{lemma}[Basic identities]\label{lem:q-basic}
For every integer $N\geq2$:
\begin{enumerate}[label=\textup{(\roman*)}]
\item $q_N$ is symmetric and real analytic on $(0,\infty)^2$, including the diagonal, and
\[
q_N(a,a)=\frac12\log N+\cN\log a.
\]

\item For $t>0$,
\begin{equation}\label{eq:q-homogeneity}
q_N(ta,tb)=q_N(a,b)+\cN\log t.
\end{equation}

\item The function $q_N$ is strictly increasing in each argument.

\item If $a\geq b>0$ and $r=b/a$, then
\begin{equation}\label{eq:q-ratio}
q_N(a,b)=\cN\log a+h_N(r),
\qquad
h_N(r)=\frac12\log\frac{1-r^2}{1-r^{2/N}}.
\end{equation}
The function $h_N$ extends continuously to $[0,1]$, with
\[
h_N(0)=0,
\qquad
h_N(1)=\frac12\log N.
\]
In particular, $h_N$ is bounded.
\end{enumerate}
\end{lemma}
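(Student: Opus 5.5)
The plan is to reduce everything to the one-variable function $\phi(x)=\frac{x^N-1}{x-1}=1+x+\cdots+x^{N-1}$ through a change of variables, and to read off each item from it.

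\textbf{Analyticity and the diagonal value (i).} For $a,b>0$ set $s=a^{2/N}$ and $t=b^{2/N}$. Then
\[
\frac{a^2-b^2}{a^{2/N}-b^{2/N}}=\frac{s^N-t^N}{s-t}=\sum_{k=0}^{N-1}s^{N-1-k}t^k .
\]
The right-hand side is a polynomial in $(s,t)$ that is symmetric and strictly positive on $(0,\infty)^2$. The maps $a\mapsto a^{2/N}$ are real analytic on $(0,\infty)$. So $q_N$ is one half the logarithm of a positive analytic function, and this gives symmetry and analyticity across the diagonal; this expression is the analytic continuation. At $a=b$ the sum equals $Ns^{N-1}=Na^{2-2/N}$. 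Hence $q_N(a,a)=\frac12\log N+\cN\log a$.

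\textbf{Homogeneity (ii).} Replacing $(a,b)$ by $(ta,tb)$ multiplies $s$ and $t$ by $t^{2/N}$. The polynomial is homogeneous of degree $N-1$, so the sum picks up a factor $t^{2(N-1)/N}$. Taking $\frac12\log$ adds $\cN\log t$.

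\textbf{Monotonicity (iii).} Each term $s^{N-1-k}t^k$ is nondecreasing in $s$. The $k=0$ term $s^{N-1}$ is strictly increasing because $N\ge2$. So the sum is strictly increasing in $s$, hence in $a$. By symmetry the same holds in $b$.

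\textbf{The ratio form (iv).} Apply (ii) with $t=1/a$ to get $q_N(a,b)=\cN\log a+q_N(1,r)$, and note $q_N(1,r)=h_N(r)$ for $r<1$. For the endpoints:
\begin{itemize}
\item At $r=1$, item (i) gives the value $\frac12\log N$.
\item At $r=0$, we have $h_N(r)=\frac12\log\sum_k \rho^k$ with $\rho=r^{2/N}$. This is continuous on $[0,1]$ and equals $\frac12\log1=0$ at $\rho=0$.
\end{itemize}
Continuity on the compact interval $[0,1]$ then gives boundedness. In fact $h_N$ is monotone by (iii), so $0\le h_N\le\frac12\log N$.

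There is no real obstacle. The only point needing care is to say explicitly that the analytic continuation at $a=b$ is the polynomial expression, so that the value in (i) is justified rather than computed as a formal limit.
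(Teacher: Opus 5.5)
Your proof is correct and is essentially the paper's argument: substitute $a^{2/N}$ and $b^{2/N}$, and use the geometric-sum identity $\frac{x^N-y^N}{x-y}=\sum_{m=0}^{N-1}x^{N-1-m}y^m$ to get symmetry, positivity, analyticity across the diagonal and strict monotonicity; homogeneity and the ratio form then follow by scaling out $a$. The only differences are cosmetic: you get $h_N(1)=\frac12\log N$ from the polynomial continuation rather than l'Hospital's rule, which is slightly cleaner, and you reuse the letter $t$ both for $b^{2/N}$ and for the scaling parameter in (ii), so one of them should be renamed.
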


\begin{proof}
Set $x=a^{2/N}$ and $y=b^{2/N}$.  The divided-difference identity
\begin{equation}\label{eq:geometric-sum}
\frac{a^2-b^2}{a^{2/N}-b^{2/N}}
=
\frac{x^N-y^N}{x-y}
=
\sum_{m=0}^{N-1}x^{N-1-m}y^m
\end{equation}
shows symmetry, positivity, analyticity across $x=y$, and strict increase in each positive variable.  At $a=b$, the sum is $Na^{2-2/N}$.  Homogeneity and \eqref{eq:q-ratio} follow by factoring out $a^{2-2/N}$.  The endpoint values of $h_N$ follow directly, using l'Hospital's rule at $r=1$.
\end{proof}

Write
\begin{equation*}
g_i(\sigma)=\partial_i\Sn(\sigma).
\end{equation*}

\begin{corollary}[Positivity of the entropy gradient]\label{cor:g-positive}
For $d\geq2$ and every $\sigma\in(0,\infty)^d$,
\[
g_i(\sigma)>0,
\qquad i=1,\ldots,d.
\]
\end{corollary}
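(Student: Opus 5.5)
Since $\Sn(\sigma)=\sum_{i<j}q_N(\sigma_i,\sigma_j)$, differentiating in $\sigma_i$ gives
\[
g_i(\sigma)=\sum_{j\neq i}\partial_1 q_N(\sigma_i,\sigma_j),
\]
where we use the symmetry of $q_N$ from \cref{lem:q-basic}(i) to write every term with $\sigma_i$ in the first slot. Because $d\geq2$, this sum has at least one term. It therefore suffices to show that $\partial_1 q_N(a,b)>0$ for all $a,b>0$, including the diagonal $a=b$.

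To prove this, I will use the substitution from the proof of \cref{lem:q-basic}. With $x=a^{2/N}$ and $y=b^{2/N}$,
\[
q_N(a,b)=\tfrac12\log P(x,y),
\qquad
P(x,y)=\sum_{m=0}^{N-1}x^{N-1-m}y^m .
\]
The polynomial $P$ is positive on $(0,\infty)^2$. Its derivative
\[
\partial_x P=\sum_{m=0}^{N-2}(N-1-m)x^{N-2-m}y^m
\]
is strictly positive there, since $N\geq2$ guarantees the $m=0$ term $(N-1)x^{N-2}>0$. The chain rule then gives
\[
\partial_1 q_N(a,b)=\frac{\partial_xP}{2P}\cdot\frac{2}{N}a^{2/N-1}>0 .
\]
This formula is valid uniformly, including at $a=b$, because it is a composition of analytic maps and needs no separate continuation argument.

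Summing over $j\neq i$ then gives $g_i(\sigma)>0$. The only point needing care is that strict monotonicity of $q_N$, as stated in \cref{lem:q-basic}(iii), does not by itself rule out a vanishing derivative. This is why I compute $\partial_x P$ explicitly rather than quoting monotonicity.
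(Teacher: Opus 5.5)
Your proof is correct and follows the paper's route: write $g_i$ as the sum of the $d-1$ pair derivatives and show each one is positive using the geometric-sum identity \eqref{eq:geometric-sum}. Computing $\partial_xP>0$ explicitly is a worthwhile refinement. The paper simply cites strict monotonicity from \cref{lem:q-basic}, and that property alone does not rule out a vanishing derivative.
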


\begin{proof}
Every $g_i$ is the sum of the partial derivatives with respect to the $i$th variable of the $d-1$ pair terms involving $\sigma_i$.  Each derivative is positive by \cref{lem:q-basic}.
\end{proof}

\begin{lemma}[Strict reverse ordering of the gradient]\label{lem:gradient-order}
Assume $N>2$.  If $\sigma_i\neq\sigma_j$, then
\begin{equation}\label{eq:gradient-order}
(\sigma_i-\sigma_j)(g_i(\sigma)-g_j(\sigma))<0.
\end{equation}
Consequently,
\[
g_i(\sigma)=g_j(\sigma)
\quad\Longleftrightarrow\quad
\sigma_i=\sigma_j.
\]
\end{lemma}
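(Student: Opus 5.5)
The plan is to reduce to the two coordinates $\sigma_i,\sigma_j$ and use the pair structure of $\Sn$. Write
\[
g_i-g_j=\bigl(\partial_1q_N(\sigma_i,\sigma_j)-\partial_2q_N(\sigma_i,\sigma_j)\bigr)+\sum_{k\neq i,j}\bigl(\partial_1q_N(\sigma_i,\sigma_k)-\partial_1q_N(\sigma_j,\sigma_k)\bigr).
\]
It then suffices to show two separate facts for $a>b>0$ and every $c>0$. First, the pair term satisfies $\partial_1q_N(a,b)<\partial_2q_N(a,b)$. Second, $a\mapsto\partial_1q_N(a,c)$ is strictly decreasing, i.e.\ $\partial_1^2q_N<0$ on $(0,\infty)^2$. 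Both sums are then negative when $\sigma_i>\sigma_j$, which gives \eqref{eq:gradient-order}. The equivalence follows immediately, since equality of the $\sigma$'s trivially gives equality of the $g$'s by symmetry of $\Sn$.

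For the pair term I will use homogeneity \eqref{eq:q-homogeneity}. Euler's identity gives $a\partial_1q_N+b\partial_2q_N=\cN$. With the ratio form \eqref{eq:q-ratio}, $r=b/a$, I get $\partial_2q_N=h_N'(r)/a$ and $\partial_1q_N=(\cN-rh_N'(r))/a$. The inequality $\partial_1q_N<\partial_2q_N$ then becomes $\cN<(1+r)h_N'(r)$ for $r\in(0,1)$. Differentiating,
\[
h_N'(r)=-\frac{r}{1-r^2}+\frac{(1/N)r^{2/N-1}}{1-r^{2/N}},
\]
so the claim reduces to a one-variable inequality. I will verify it in the variable $s=r^{2/N}$ by clearing denominators, and I expect a convexity or AM--GM argument to work. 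As a sanity check, the right side tends to $+\infty$ as $r\to0$, and as $r\to1$ both sides should approach $\cN$ only at the endpoint, which is consistent with $N>2$ being needed.

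For the monotonicity of $\partial_1q_N(\cdot,c)$, I will write $q_N(a,c)=\frac12\log\sum_{m=0}^{N-1}x^{N-1-m}y^m$ with $x=a^{2/N}$, $y=c^{2/N}$. In the log variable $u=\log a$, the sum $\sum_m e^{(2/N)(N-1-m)u}y^m$ is a positive exponential sum, so $\log$ of it is convex in $u$. Convexity gives $\partial_u(\log)$ increasing, but I need $\partial_a q=\frac1a\partial_u q$ to be decreasing. So I need the sharper bound $\partial_u^2 q<\partial_u q$, that is, $\operatorname{Var}(\lambda)<\frac12\mathbb E(\lambda)$ under the Gibbs weights on exponents $\lambda_m=\frac2N(N-1-m)\in[0,2-2/N]$.

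For that bound, since $\lambda\in[0,L]$ with $L=2-2/N$, I have $\operatorname{Var}\le\mathbb E[\lambda(L-\lambda)]\le\mathbb E[\lambda]\,L-\mathbb E[\lambda^2]$. The crude estimate $\operatorname{Var}\le \mathbb E\lambda^2-(\mathbb E\lambda)^2$ is not enough when $L>1/2$. I will therefore try a direct computation. The function $\partial_aq(a,c)=\frac12\bigl(\frac{2a}{a^2-c^2}-\frac{(2/N)a^{2/N-1}}{a^{2/N}-c^{2/N}}\bigr)$ is explicit. Scaling $c=1$ and writing $a^{2/N}=x$, I will reduce $\partial_a^2q<0$ to a polynomial inequality in $x$, checked via the geometric-sum form \eqref{eq:geometric-sum}.

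I expect this last step to be the main obstacle. If it resists direct attack, I will try a fallback: invoke \cref{thm:strict-concavity} restricted to the two coordinates $(\sigma_i,\sigma_j)$ along the direction $e_i-e_j$. This gives $(\sigma_i-\sigma_j)(g_i-g_j)$ negative along a path from the symmetric point. Integrating $\frac{d}{dt}(g_i-g_j)$ along $\sigma+t(e_i-e_j)$ from the point where the two coordinates coincide, where $g_i=g_j$ by symmetry, yields \eqref{eq:gradient-order} at once. That route is cleaner, and I will adopt it if the calculus becomes messy.
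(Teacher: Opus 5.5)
Your fallback argument is correct and is essentially the paper's proof. The paper lets $\tau$ swap coordinates $i$ and $j$, uses $\nabla\Sn(\tau\sigma)=\tau\nabla\Sn(\sigma)$, and applies strict monotonicity of the gradient of a strictly concave function to the pair $\sigma,\tau\sigma$, which gives $2(\sigma_i-\sigma_j)(g_i-g_j)<0$. Since $\sigma-\tau\sigma=(\sigma_i-\sigma_j)(e_i-e_j)$, the segment joining these two points is exactly your line through the symmetric midpoint. Your integration of $(e_i-e_j)^T\nabla^2\Sn\,(e_i-e_j)<0$ from that midpoint is the same monotonicity inequality written out.

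Your primary pairwise route is genuinely different. The decomposition of $g_i-g_j$ into the $(i,j)$ pair term plus the terms $\partial_1q_N(\sigma_i,\sigma_k)-\partial_1q_N(\sigma_j,\sigma_k)$ is correct, and so is the logic once both claims hold.

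The first claim is elementary. For $a>b>0$,
\[
\partial_1q_N(a,b)-\partial_2q_N(a,b)=\frac1{a-b}-\frac1N\cdot\frac{a^{2/N-1}+b^{2/N-1}}{a^{2/N}-b^{2/N}},
\]
and negativity is equivalent to $\int_b^a s^{2/N-1}\,ds<\frac{a-b}2\bigl(a^{2/N-1}+b^{2/N-1}\bigr)$. This is the trapezoid inequality for the strictly convex function $s\mapsto s^{2/N-1}$, and it is where $N>2$ enters.

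The second claim is true, but your reduction misstates it. With $q=\frac12\log Z$ you have $\partial_uq=\frac12\mathbb E\lambda$ and $\partial_u^2q=\frac12\operatorname{Var}\lambda$. What you need is therefore $\operatorname{Var}\lambda<\mathbb E\lambda$, not $\operatorname{Var}\lambda<\frac12\mathbb E\lambda$. The inequality you wrote is false for $N=3$ when $a/c$ is small, since $\operatorname{Var}\lambda/\mathbb E\lambda\to 2/N$ as $a/c\to0$. Separately, the bound $\operatorname{Var}\lambda\le\mathbb E[\lambda(L-\lambda)]$ is not a valid inequality: it fails for any nondegenerate law supported on $\{0,L\}$.

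The cleanest justification of the second claim is that $\partial_1^2q_N$ is the $(1,1)$ entry of $\nabla^2\Sn$ in width two. It therefore follows from \cref{thm:strict-concavity} with $d=2$.

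What the pairwise route buys is a reduction of the lemma, in every width, to the single two-variable function $q_N$. In width two the first claim alone proves the lemma, with no appeal to the concavity theorem. The paper's route is shorter, but it uses the full $d$-variable strict concavity.
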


\begin{proof}
Let $\tau$ exchange coordinates $i$ and $j$, and put $\widetilde\sigma=\tau\sigma$.  Symmetry gives $\nabla\Sn(\widetilde\sigma)=\tau\nabla\Sn(\sigma)$.  By \cref{thm:strict-concavity},
\[
\bigl(\nabla\Sn(\sigma)-\nabla\Sn(\widetilde\sigma)\bigr)
\cdot(\sigma-\widetilde\sigma)<0
\]
when $\sigma\neq\widetilde\sigma$.  The left-hand side is
\[
2(\sigma_i-\sigma_j)(g_i-g_j).
\]
This proves \eqref{eq:gradient-order}.  The converse implication follows from symmetry when $\sigma_i=\sigma_j$.
\end{proof}

\begin{lemma}[Boundary derivative]\label{lem:boundary-derivative}
Fix $0<m<M<\infty$.  Uniformly for $a\in[m,M]$, as $b\downarrow0$,
\begin{equation*}
\partial_bq_N(a,b)
=
\frac1N a^{-2/N}b^{2/N-1}\bigl(1+o(1)\bigr).
\end{equation*}
Equivalently, with $p=2-2/N$,
\begin{equation}\label{eq:boundary-drift-pair}
N b^p\partial_bq_N(a,b)
=
a^{-2/N}b+o(b).
\end{equation}
The exact identity is
\begin{equation}\label{eq:boundary-drift-exact}
N b^p\partial_bq_N(a,b)
=
\frac{b}{a^{2/N}-b^{2/N}}
-
\frac{N b^{3-2/N}}{a^2-b^2}.
\end{equation}
\end{lemma}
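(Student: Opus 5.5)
The plan is to differentiate $q_N$ exactly, which yields the identity \eqref{eq:boundary-drift-exact}, and then read off the asymptotics from it. Write
\[
q_N(a,b)=\tfrac12\log(a^2-b^2)-\tfrac12\log\bigl(a^{2/N}-b^{2/N}\bigr)
\]
for $a>b$; this is the relevant regime, since $b\downarrow0$ with $a\geq m$. Differentiating in $b$ gives
\[
\partial_bq_N(a,b)=-\frac{b}{a^2-b^2}+\frac{\frac1N b^{2/N-1}}{a^{2/N}-b^{2/N}}.
\]
Multiplying by $Nb^p=Nb^{2-2/N}$, the second term becomes $b/(a^{2/N}-b^{2/N})$. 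The first becomes $-Nb^{3-2/N}/(a^2-b^2)$. This is exactly \eqref{eq:boundary-drift-exact}.

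For the asymptotics I would treat the two terms of \eqref{eq:boundary-drift-exact} separately, uniformly in $a\in[m,M]$.

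\textbf{First term.} For $b\leq m/2$ we have $a^{2/N}-b^{2/N}\geq a^{2/N}\bigl(1-(b/m)^{2/N}\bigr)$. Hence
\[
\frac{b}{a^{2/N}-b^{2/N}}=a^{-2/N}b\,\bigl(1+O(b^{2/N})\bigr),
\]
and the $O$-constant depends only on $m$. Because $a^{-2/N}\leq m^{-2/N}$ is bounded, the error is $o(b)$ uniformly in $a$.

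\textbf{Second term.} Since $a^2-b^2\geq m^2/2$ for small $b$, this term is $O(b^{3-2/N})$. We have $3-2/N>1$ for every $N\geq2$ (indeed $3-2/N\geq2$), so it is $o(b)$ uniformly.

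Adding the two terms gives \eqref{eq:boundary-drift-pair}. To get the first form of the statement, divide by $Nb^p$ and note that $a^{-2/N}b/(Nb^p)=\frac1N a^{-2/N}b^{2/N-1}$. The relative error is $o(b)/(a^{-2/N}b)=o(1)$ uniformly, because $a^{-2/N}\geq M^{-2/N}>0$.

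There is no real obstacle. The only points needing care are keeping all error bounds uniform in $a$, using both $m>0$ and $M<\infty$, and checking that the exponent $3-2/N$ exceeds $1$.
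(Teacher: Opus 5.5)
Your proposal is correct and follows the paper's proof. You differentiate $q_N$ in the smaller argument, multiply by $Nb^p$ to obtain \eqref{eq:boundary-drift-exact}, and note that the second term dominates uniformly for $a\in[m,M]$; you also spell out the uniform error bounds the paper leaves implicit, and your check that $3-2/N>1$ shows the asymptotic holds for every $N\geq2$, not only for the case $N>2$ that the paper's proof mentions.
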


\begin{proof}
Differentiate \eqref{eq:qN} with respect to the smaller argument:
\[
\partial_bq_N(a,b)
=
-\frac{b}{a^2-b^2}
+
\frac1N\frac{b^{2/N-1}}{a^{2/N}-b^{2/N}}.
\]
The second term is dominant when $N>2$, and its expansion is uniform for $a\in[m,M]$.  Multiplication by $Nb^p$ gives \eqref{eq:boundary-drift-exact} and \eqref{eq:boundary-drift-pair}.
\end{proof}

\begin{proposition}[Exterior-volume representation and logarithmic bounds]\label{lem:entropy-upper}
Order the singular values as $\sigma_1\geq\cdots\geq\sigma_d>0$.  Then
\begin{equation}\label{eq:entropy-weighted-singular-exact}
S_N(W)
=
\cN\sum_{i=1}^{d-1}(d-i)\log\sigma_i
+R_N(W),
\end{equation}
where
\begin{equation}\label{eq:R-bound}
0\leq R_N(W)\leq\frac12\binom d2\log N.
\end{equation}
Equivalently,
\begin{equation}\label{eq:entropy-exterior-exact}
S_N(W)
=
\cN\sum_{k=1}^{d-1}
\log\left\|\bigwedge\nolimits^kW\right\|_{\mathrm{op}}
+R_N(W).
\end{equation}
Consequently,
\begin{equation}\label{eq:entropy-exterior-upper}
S_N(W)
\leq
C_{N,d}
+
\cN\sum_{k=1}^{d-1}
\log^+\left\|\bigwedge\nolimits^kW\right\|_{\mathrm{op}},
\end{equation}
and
\begin{equation}\label{eq:entropy-upper}
S_N(W)
\leq
C_{N,d}
+
\binom d2\cN\log(1+\sigma_1(W)).
\end{equation}
\end{proposition}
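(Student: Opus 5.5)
The plan is to expand the pairwise sum of \cref{def:qN} with the ordering $\sigma_1\geq\cdots\geq\sigma_d$ and apply \eqref{eq:q-ratio} to each pair $i<j$, taking $a=\sigma_i$, $b=\sigma_j$ and $r_{ij}=\sigma_j/\sigma_i\in(0,1]$. This gives
\[
S_N(W)=\sum_{i<j}\bigl(\cN\log\sigma_i+h_N(r_{ij})\bigr).
\]
For fixed $i$, the index $j$ ranges over the $d-i$ values $j>i$. Collecting the logarithmic terms therefore yields $\cN\sum_{i=1}^{d-1}(d-i)\log\sigma_i$, and we define $R_N(W)=\sum_{i<j}h_N(r_{ij})$. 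Since the singular values are symmetric functions of $W$, $R_N$ is well defined; at ties one uses the analytic continuation $h_N(1)=\tfrac12\log N$, which is consistent with \cref{lem:q-basic}(i).

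The only substantive step is the bound \eqref{eq:R-bound}, namely $0\leq h_N(r)\leq\tfrac12\log N$ on $[0,1]$. From \eqref{eq:geometric-sum} with $y=r^{2/N}x$, normalized at $a=1$, we get
\[
e^{2h_N(r)}=\sum_{m=0}^{N-1}s^m,\qquad s=r^{2/N}\in[0,1].
\]
This sum is at least $1$, since the $m=0$ term equals $1$ and the others are nonnegative. It is at most $N$, since each term is at most $1$. Hence $0\leq h_N\leq\tfrac12\log N$, and summing over the $\binom d2$ pairs gives \eqref{eq:R-bound}. The same formula also shows that $h_N$ is increasing, although this is not needed here. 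I would present this step as the key check, even though it is elementary.

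For \eqref{eq:entropy-exterior-exact}, I will use the standard fact that $\|\bigwedge^kW\|_{\mathrm{op}}=\sigma_1\cdots\sigma_k$. Then
\[
\sum_{k=1}^{d-1}\log\|\textstyle\bigwedge^kW\|_{\mathrm{op}}=\sum_{k=1}^{d-1}\sum_{i\leq k}\log\sigma_i=\sum_{i=1}^{d-1}(d-i)\log\sigma_i,
\]
because $\log\sigma_i$ appears once for each $k$ with $i\leq k\leq d-1$, that is, $d-i$ times.

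The bound \eqref{eq:entropy-exterior-upper} follows from $\log\leq\log^+$ together with $R_N\leq\tfrac12\binom d2\log N=:C_{N,d}$. For \eqref{eq:entropy-upper}, I would return to \eqref{eq:entropy-weighted-singular-exact} and bound each $\log\sigma_i$ by $\log^+\sigma_i\leq\log(1+\sigma_1)$. Since $\sum_{i=1}^{d-1}(d-i)=\binom d2$, the claimed estimate follows with the same constant.
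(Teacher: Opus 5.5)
Your proposal is correct and follows the paper's proof essentially step by step. You apply \eqref{eq:q-ratio} pairwise, bound $0\leq h_N\leq\frac12\log N$ via the geometric sum \eqref{eq:geometric-sum}, and use $\|\bigwedge^kW\|_{\mathrm{op}}=\sigma_1\cdots\sigma_k$, exactly as the paper does. The only difference is cosmetic and concerns \eqref{eq:entropy-upper}: you bound each $\log\sigma_i$ by $\log(1+\sigma_1)$ directly rather than using $\|\bigwedge^kW\|_{\mathrm{op}}\leq\sigma_1^k$, so you need no adjustment of the constant when $\sigma_1\leq1$.
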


\begin{proof}
For $i<j$, write $r_{ij}=\sigma_j/\sigma_i\in(0,1]$.  By \eqref{eq:q-ratio},
\[
q_N(\sigma_i,\sigma_j)
=
\cN\log\sigma_i+h_N(r_{ij}),
\]
where $0\leq h_N(r)\leq\frac12\log N$ by \eqref{eq:geometric-sum}.  Summing over $i<j$ gives \eqref{eq:entropy-weighted-singular-exact}, with
\[
R_N(W)=\sum_{i<j}h_N(r_{ij}),
\]
and proves \eqref{eq:R-bound}.

The singular values of the induced map $\bigwedge^kW$ are all products $\sigma_{i_1}\cdots\sigma_{i_k}$, so
\[
\left\|\bigwedge\nolimits^kW\right\|_{\mathrm{op}}
=\sigma_1\cdots\sigma_k.
\]
Therefore
\[
\sum_{k=1}^{d-1}
\log\left\|\bigwedge\nolimits^kW\right\|_{\mathrm{op}}
=
\sum_{k=1}^{d-1}\sum_{i=1}^k\log\sigma_i
=
\sum_{i=1}^{d-1}(d-i)\log\sigma_i,
\]
which proves \eqref{eq:entropy-exterior-exact}.  Replacing each logarithm by its positive part yields \eqref{eq:entropy-exterior-upper}.  Finally,
\[
\left\|\bigwedge\nolimits^kW\right\|_{\mathrm{op}}
\leq\sigma_1(W)^k,
\]
and $\sum_{k=1}^{d-1}k=\binom d2$, giving \eqref{eq:entropy-upper} after an adjustment of the constant for $\sigma_1\leq1$.
\end{proof}

\section{When is the free energy bounded below?}\label{sec:general-energy}

The diagonal-completion loss is one example of a broader obstruction.  In this section we separate three issues that are sometimes conflated:
\begin{enumerate}[label=\textup{(\roman*)}]
\item whether $F_\beta$ is bounded from below;
\item whether $F_\beta(W)\to+\infty$ as $\normF{W}\to\infty$;
\item whether the sublevel sets of $F_\beta$ are compact in the full-rank state space $\GLd$.
\end{enumerate}
The first property rules out arbitrarily negative values, the second gives confinement in norm, and the third is the natural condition for an equilibrium selection principle on the open manifold $\GLd$.  Boundedness from below alone does not prevent divergent gradient trajectories: the function $f(x)=e^{-x}$ is bounded below on $\mathbb R$, but its negative-gradient flow satisfies $\dot x=e^{-x}$ and has $x(t)\to+\infty$.

\subsection{The logarithmic entropy scale}

The exterior-volume formula in \cref{lem:entropy-upper} shows that the fiber entropy grows at most logarithmically.  It is convenient to write
\begin{equation}\label{eq:Psi-def}
\Psi(W)
=
\sum_{k=1}^{d-1}
\log^+\left\|\bigwedge\nolimits^k W\right\|_{\mathrm{op}},
\qquad
\log^+x=\max\{\log x,0\}.
\end{equation}
Then \cref{lem:entropy-upper} gives
\begin{equation}\label{eq:entropy-Psi-upper}
S_N(W)\leq C_{N,d}+\cN\Psi(W),
\end{equation}
and, more crudely,
\begin{equation}\label{eq:entropy-norm-upper-general}
S_N(W)
\leq
C_{N,d}+\alpha_{N,d}\log(1+\normF{W}),
\qquad
\alpha_{N,d}=\cN\binom d2.
\end{equation}
The coefficient in the norm estimate is sharp at the level of a uniform radial bound: along $W=tI_d$,
\begin{equation*}
S_N(tI_d)=\alpha_{N,d}\log t+C_{N,d}.
\end{equation*}
Thus polynomial growth of the energy is far more than is needed to dominate the entropy.  The natural threshold is logarithmic.

\subsection{An exact criterion from energy sublevel sets}

Let $E:\GLd\to\mathbb R$ be bounded below.  Adding a constant to $E$ changes $F_\beta$ by the same constant and has no effect on boundedness or on the gradient flow, so we normalize $E\geq0$.  Define the \emph{entropy profile of the energy} by
\begin{equation*}
\Theta_E(r)
=
\sup\{S_N(W):W\in\GLd,\ E(W)\leq r\},
\qquad r\geq0,
\end{equation*}
with values in $\mathbb R\cup\{+\infty\}$.

\begin{proposition}[Exact sublevel criterion]\label{prop:Theta-criterion}
For every $\beta>0$,
\begin{equation}\label{eq:Theta-duality}
\sup_{W\in\GLd}\bigl(S_N(W)-\beta E(W)\bigr)
=
\sup_{r\geq0}\bigl(\Theta_E(r)-\beta r\bigr)
\end{equation}
in the extended real numbers.  Consequently,
\begin{equation}\label{eq:Theta-bounded-iff}
F_\beta\text{ is bounded below}
\quad\Longleftrightarrow\quad
\sup_{r\geq0}\bigl(\Theta_E(r)-\beta r\bigr)<\infty.
\end{equation}
\end{proposition}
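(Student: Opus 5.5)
The identity \eqref{eq:Theta-duality} is a layer-cake (sublevel-set) decomposition of a supremum, and I would prove it by establishing the two inequalities separately in $\mathbb R\cup\{+\infty\}$. Throughout I use only the normalization $E\geq0$ and the definition of $\Theta_E$; no property of $S_N$ beyond its being real-valued on $\GLd$ is needed. Note that $\Theta_E(r)$ might be $-\infty$ if the sublevel set is empty, but $\Theta_E(r)>-\infty$ for every $r\geq r_0:=\inf E$.

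For the inequality ``$\leq$'', fix $W\in\GLd$ and set $r=E(W)\geq0$. Then $W$ belongs to the sublevel set $\{E\leq r\}$, so $S_N(W)\leq\Theta_E(r)$ and hence
\[
S_N(W)-\beta E(W)\leq\Theta_E(r)-\beta r\leq\sup_{r'\geq0}\bigl(\Theta_E(r')-\beta r'\bigr).
\]
Taking the supremum over $W$ gives the bound.

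For ``$\geq$'', fix $r\geq0$ and any $W$ with $E(W)\leq r$. Since $\beta>0$, we have $-\beta E(W)\geq-\beta r$, so
\[
S_N(W)-\beta r\leq S_N(W)-\beta E(W)\leq\sup_{\GLd}\bigl(S_N-\beta E\bigr).
\]
Taking the supremum over such $W$ yields $\Theta_E(r)-\beta r\leq\sup_{\GLd}(S_N-\beta E)$. This also covers the case $\Theta_E(r)=+\infty$, which forces the right-hand side to be $+\infty$. Empty sublevel sets contribute $-\infty$ and are harmless.

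For the consequence \eqref{eq:Theta-bounded-iff}, use $F_\beta=E-\beta^{-1}S_N=-\beta^{-1}(S_N-\beta E)$. Thus $\inf F_\beta=-\beta^{-1}\sup(S_N-\beta E)$, and $F_\beta$ is bounded below exactly when this supremum is finite. It is never $-\infty$, since $\GLd$ is nonempty.

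The only point requiring care, though it is mild, is the bookkeeping in the extended reals: the cases $\Theta_E(r)=\pm\infty$ must be handled consistently, and the argument uses $\beta>0$ in the monotonicity step.
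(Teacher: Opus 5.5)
Your proof is correct and follows the paper's argument exactly: the ``$\leq$'' direction by taking $r=E(W)$, the ``$\geq$'' direction by using $-\beta E(W)\geq-\beta r$ on $\{E\leq r\}$, and the consequence from $\inf F_\beta=-\beta^{-1}\sup(S_N-\beta E)$. One harmless slip, in an aside you never use: if $\inf E$ is not attained, the sublevel set at $r_0=\inf E$ is empty, so $\Theta_E(r_0)=-\infty$ and the claim should read $r>r_0$.
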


\begin{proof}
For any $W\in\GLd$, setting $r=E(W)$ gives
\[
S_N(W)-\beta E(W)
\leq
\Theta_E(r)-\beta r,
\]
so the left-hand side of \eqref{eq:Theta-duality} is at most the right-hand side.  Conversely, if $E(W)\leq r$, then
\[
S_N(W)-\beta r
\leq
S_N(W)-\beta E(W).
\]
Taking the supremum over such $W$ gives
\[
\Theta_E(r)-\beta r
\leq
\sup_{W\in\GLd}\bigl(S_N(W)-\beta E(W)\bigr).
\]
Now take the supremum over $r$.  Since
\[
\inf F_\beta
=-\frac1\beta
\sup_W\bigl(S_N(W)-\beta E(W)\bigr),
\]
\eqref{eq:Theta-bounded-iff} follows.
\end{proof}

\begin{corollary}\label{cor:Theta-consequences}
The following statements hold.
\begin{enumerate}[label=\textup{(\roman*)}]
\item If $\Theta_E(r_0)=+\infty$ for some finite $r_0$, then $F_\beta$ is unbounded below for every finite $\beta>0$.
\item If $\Theta_E(r)\leq ar+C$ for all $r\geq0$, then $F_\beta$ is bounded below for every $\beta\geq a$.
\item If $\Theta_E(r)<\infty$ for finite $r$ and $\Theta_E(r)=o(r)$ as $r\to\infty$, then $F_\beta$ is bounded below for every $\beta>0$.
\end{enumerate}
\end{corollary}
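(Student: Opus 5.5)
All three statements will be read off from the identity \eqref{eq:Theta-bounded-iff} of \cref{prop:Theta-criterion}, which says that $F_\beta$ is bounded below exactly when $\sup_{r\geq0}(\Theta_E(r)-\beta r)<\infty$. Throughout I keep the normalization $E\geq0$. Note that $\Theta_E$ is nondecreasing in $r$, since the sublevel sets increase.

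For (i), suppose $\Theta_E(r_0)=+\infty$. Then $\Theta_E(r_0)-\beta r_0=+\infty$ for every finite $\beta$, because $\beta r_0$ is finite. So the supremum in \eqref{eq:Theta-duality} is $+\infty$, and \eqref{eq:Theta-bounded-iff} shows that $F_\beta$ is unbounded below. Equivalently, one may pick $W_n$ with $E(W_n)\leq r_0$ and $S_N(W_n)\to\infty$, which gives $F_\beta(W_n)\leq r_0-\beta^{-1}S_N(W_n)\to-\infty$.

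For (ii), suppose $\Theta_E(r)\leq ar+C$ and $\beta\geq a$. Then
\[
\Theta_E(r)-\beta r\leq (a-\beta)r+C\leq C
\]
for all $r\geq0$. Hence the supremum is at most $C$, and the proposition gives $\inf F_\beta\geq -C/\beta$.

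For (iii), fix $\beta>0$. Since $\Theta_E(r)=o(r)$, there is $R$ with $\Theta_E(r)\leq \beta r/2$ for $r\geq R$, so $\Theta_E(r)-\beta r\leq0$ there. On $[0,R]$ I use monotonicity and finiteness:
\[
\Theta_E(r)-\beta r\leq\Theta_E(R)<\infty.
\]
This bound requires $\Theta_E(R)>-\infty$, which holds once the domain is nonempty. If some sublevel set is empty, $\Theta_E=-\infty$ there, and those $r$ contribute nothing to the supremum. The supremum is therefore finite.

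The main care point is this bounded range $[0,R]$ in (iii). Finiteness of $\Theta_E$ at each finite $r$, together with monotonicity, is exactly what rules out blow-up there; no continuity of $\Theta_E$ is needed.
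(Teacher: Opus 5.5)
Your proof is correct and follows the paper's route: every part is read off from \eqref{eq:Theta-bounded-iff}, and in (iii) you choose $R$ with $\Theta_E(r)\leq\beta r/2$ for $r\geq R$, exactly as the paper does. Your observation that $\Theta_E$ is nondecreasing, so that $\sup_{[0,R]}\Theta_E=\Theta_E(R)<\infty$, makes explicit what the paper's appeal to ``finiteness of $\Theta_E$ on the compact interval $[0,R]$'' leaves implicit, since pointwise finiteness alone would not bound the supremum; that step needs only $\Theta_E(R)<\infty$, not the condition $\Theta_E(R)>-\infty$ that you flag.
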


\begin{proof}
Each statement follows immediately from \eqref{eq:Theta-bounded-iff}.  For part \textup{(iii)}, choose $R$ so that $\Theta_E(r)\leq \beta r/2$ for $r\geq R$ and use finiteness of $\Theta_E$ on the compact interval $[0,R]$.
\end{proof}

The criterion pinpoints the obstruction in diagonal completion: there is unlimited entropy already at zero loss.  In the notation above,
\begin{equation*}
\Theta_{E_d}(0)=+\infty.
\end{equation*}

\subsection{Affine flat directions}

The preceding failure is structural for any loss with a nontrivial affine direction on which the energy stays bounded.

\begin{lemma}[Entropy diverges on every nontrivial affine ray]\label{lem:affine-ray}
Let $W_0\in\GLd$ and $0\neq H\in\Md$.  Then $W_0+tH$ is invertible for all but finitely many $t\in\mathbb R$, and
\begin{equation}\label{eq:affine-ray-entropy}
S_N(W_0+tH)\longrightarrow+\infty
\qquad\text{as }|t|\to\infty
\end{equation}
through those values for which $W_0+tH\in\GLd$.
\end{lemma}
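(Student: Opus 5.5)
The first claim concerns invertibility away from finitely many $t$. The function $p(t)=\det(W_0+tH)$ is a polynomial in $t$ of degree at most $d$, and $p(0)=\det W_0\neq0$. So $p$ is not identically zero, and it has finitely many real roots.

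For the divergence I will use the exact representation \eqref{eq:entropy-weighted-singular-exact}:
\[
S_N(W)=\cN\sum_{i=1}^{d-1}(d-i)\log\sigma_i(W)+R_N(W),
\]
where $R_N\ge0$ and the singular values are ordered decreasingly. The task is to show that $\sum_{i=1}^{d-1}(d-i)\log\sigma_i(W_0+tH)\to+\infty$. The difficulty is that the smaller singular values of $W_0+tH$ may tend to zero, so some of the logarithms can be very negative. I handle this by using the weighted sum in its equivalent exterior form \eqref{eq:entropy-exterior-exact}, $\sum_{k=1}^{d-1}\log\|\bigwedge^k W\|_{\mathrm{op}}$, and bounding each term below separately.

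The key observation concerns the matrix entries of $\bigwedge^k(W_0+tH)$, which are the $k\times k$ minors of $W_0+tH$. Each is a polynomial in $t$, and none of these polynomials depends on the sign of $t$ in any special way. I then bound each exterior power from below.
\begin{itemize}
\item For $k=1$: $\|W_0+tH\|_{\mathrm{op}}\ge |t|\,\|H\|_{\mathrm{op}}-\|W_0\|_{\mathrm{op}}$, which tends to $\infty$ linearly, since $H\ne0$.
\item For $2\le k\le d-1$: I need $\|\bigwedge^k(W_0+tH)\|_{\mathrm{op}}$ to be bounded below by a positive constant for large $|t|$. The operator norm dominates the largest entry in absolute value, so it suffices to find one $k\times k$ minor that is not identically zero as a polynomial in $t$. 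A nonzero polynomial is bounded away from $0$ for large $|t|$; indeed it tends either to a nonzero constant or to $\infty$ in absolute value.
\end{itemize}
The required minor exists because $p(0)\ne0$. By Laplace expansion, $\det(W_0+tH)$ is a combination of products of $k\times k$ minors with complementary minors. If all $k\times k$ minors vanished identically in $t$, then $p$ would vanish identically, which is a contradiction. So some $k\times k$ minor $m_k(t)$ is a nonzero polynomial, and $|m_k(t)|\ge c_k>0$ for $|t|\ge T_k$.

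Combining these bounds, for $|t|$ large,
\[
S_N(W_0+tH)\ge \cN\Bigl(\log\bigl(|t|\|H\|_{\mathrm{op}}-\|W_0\|_{\mathrm{op}}\bigr)+\sum_{k=2}^{d-1}\log c_k\Bigr),
\]
which tends to $+\infty$. The only delicate step is the lower bound on the intermediate exterior powers. I expect the Laplace-expansion argument to settle it, and it works uniformly for both $t\to+\infty$ and $t\to-\infty$. For $d=2$ the sum over $k\ge2$ is empty and only the $k=1$ term is needed.
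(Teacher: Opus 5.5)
Your proof is correct and follows the paper's argument: use the exterior-volume representation \eqref{eq:entropy-exterior-exact}, show the $k=1$ term grows like $\log|t|$, and bound $\|\bigwedge^k(W_0+tH)\|_{\mathrm{op}}$ below by a positive constant for $2\le k\le d-1$ using a $k\times k$ minor that is a nonzero polynomial in $t$. The only difference is cosmetic: the paper obtains that minor directly from invertibility of $W_0$ at $t=0$, whereas you deduce it from the generalized Laplace expansion of $\det(W_0+tH)$, and both routes are valid.
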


\begin{proof}
The function $t\mapsto\det(W_0+tH)$ is a polynomial that is nonzero at $t=0$, so it is not identically zero and has only finitely many real roots.

For $1\leq k\leq d-1$, the matrix of $\bigwedge^k(W_0+tH)$ in the standard exterior basis consists of the $k\times k$ minors of $W_0+tH$, each of which is a polynomial in $t$.  Since $W_0$ is invertible, at least one $k\times k$ minor is nonzero at $t=0$.  The corresponding polynomial is not identically zero and hence, for all sufficiently large $|t|$, its absolute value is bounded below by a positive constant.  Therefore
\begin{equation}\label{eq:wedge-lower-affine}
\left\|\bigwedge\nolimits^k(W_0+tH)\right\|_{\mathrm{op}}
\geq c_k>0
\end{equation}
for large $|t|$.

For $k=1$, choose an entry $H_{ij}\neq0$.  Then
\[
|(W_0+tH)_{ij}|\longrightarrow\infty,
\]
so $\|W_0+tH\|_{\mathrm{op}}\to\infty$.  The exterior-volume representation \eqref{eq:entropy-exterior-exact} now has one term tending to $+\infty$, while all remaining terms are bounded below by \eqref{eq:wedge-lower-affine}; its remainder is uniformly bounded.  This proves \eqref{eq:affine-ray-entropy}.
\end{proof}

\begin{corollary}[Affine-ray obstruction]\label{cor:affine-energy-obstruction}
Suppose that for some $W_0\in\GLd$ and $0\neq H\in\Md$,
\begin{equation*}
\sup_{|t|\geq t_0}E(W_0+tH)<\infty.
\end{equation*}
Then $F_\beta$ is unbounded below for every finite $\beta>0$.  In particular, this holds if $E$ is constant on a nontrivial affine ray through a full-rank matrix.
\end{corollary}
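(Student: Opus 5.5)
The plan is to combine \cref{lem:affine-ray} with the hypothesis of the corollary along the given ray. Put $M=\sup_{|t|\geq t_0}E(W_0+tH)<\infty$ and let $T$ be the finite set of real $t$ at which $W_0+tH$ is singular; \cref{lem:affine-ray} shows $T$ is finite. Fix a finite $\beta>0$. For every $t$ with $|t|\geq t_0$ and $t\notin T$, the matrix $W_0+tH$ lies in $\GLd$ and satisfies
\[
F_\beta(W_0+tH)=E(W_0+tH)-\frac1\beta S_N(W_0+tH)\leq M-\frac1\beta S_N(W_0+tH).
\]

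Next I would let $|t|\to\infty$ through such values; this is possible because $T$ is finite. By \eqref{eq:affine-ray-entropy}, $S_N(W_0+tH)\to+\infty$, and $\beta$ is finite and positive, so the right-hand side tends to $-\infty$. Hence $\inf_{\GLd}F_\beta=-\infty$. The same conclusion can be read off from \cref{prop:Theta-criterion}: with the normalization $E\geq0$, the estimate above gives $\Theta_E(M)=+\infty$, and \cref{cor:Theta-consequences}(i) applies. The direct argument is shorter and does not need the normalization, so I would use it.

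For the ``in particular'' clause, suppose $E(W_0+tH)=c$ for all $t$ in the ray for which $W_0+tH\in\GLd$. Then the supremum in the hypothesis equals $c$, which is finite, and the first part applies. If the ray is one-sided, say $t\geq0$, I would note that the proof of \cref{lem:affine-ray} gives divergence as $t\to+\infty$ alone. That proof uses only that one entry $|(W_0+tH)_{ij}|$ tends to infinity and that the minor polynomials are bounded below for large $|t|$, and both facts hold in each direction separately. So the argument goes through with the supremum taken over $t\geq t_0$ only.

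No step here is a real obstacle; the corollary follows directly from \cref{lem:affine-ray}. The only care needed is to restrict $t$ to values where $W_0+tH$ is full rank, so that $F_\beta$ is defined there, and finiteness of $T$ handles this.
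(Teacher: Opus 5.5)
Your proposal is correct and follows the paper's proof, which combines the entropy divergence \eqref{eq:affine-ray-entropy} from \cref{lem:affine-ray} with the assumed upper bound on the energy along the ray. Restricting to the cofinitely many $t$ with $W_0+tH\in\GLd$ and treating a one-sided ray are both fine. For the one-sided case you do not need to revisit the lemma's proof: its conclusion already covers $t\to+\infty$ alone, since that forces $|t|\to\infty$.
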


\begin{proof}
Combine \eqref{eq:affine-ray-entropy} with the assumed upper bound for the energy.
\end{proof}

\subsection{A dichotomy for linear matrix sensing}

Let $\mathcal L:\Md\to\mathbb R^m$ be linear, let $y\in\mathbb R^m$, and consider the squared sensing loss
\begin{equation*}
E_{\mathcal L,y}(W)
=
\frac12\|\mathcal L(W)-y\|_2^2.
\end{equation*}

\begin{theorem}[Linear-sensing dichotomy]\label{thm:linear-sensing-dichotomy}
For every $d\geq2$, $N\geq2$, and $\beta>0$, the free energy
\[
F_{\beta,\mathcal L,y}(W)
=
E_{\mathcal L,y}(W)-\frac1\beta S_N(W)
\]
is bounded below on $\GLd$ if and only if $\mathcal L$ is injective.  If $\mathcal L$ is injective, then
\begin{equation*}
F_{\beta,\mathcal L,y}(W)\longrightarrow+\infty
\qquad\text{as }\normF{W}\to\infty.
\end{equation*}
\end{theorem}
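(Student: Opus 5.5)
The proof splits into the two directions of the dichotomy. Coercivity in the injective case comes out of the same estimate that gives boundedness.

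\emph{Noninjective case.} Suppose $\mathcal L$ is not injective and choose $0\neq H\in\ker\mathcal L$. Fix any $W_0\in\GLd$, for instance $W_0=I_d$. Along the affine line $W_0+tH$ the loss is constant:
\[
E_{\mathcal L,y}(W_0+tH)=\tfrac12\|\mathcal L(W_0)-y\|_2^2 .
\]
The hypothesis of \cref{cor:affine-energy-obstruction} therefore holds trivially. Equivalently, \cref{lem:affine-ray} gives $S_N(W_0+tH)\to+\infty$ along the invertible values of $t$, so $F_{\beta,\mathcal L,y}(W_0+tH)\to-\infty$. This direction needs only results already proved.

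\emph{Injective case.} Suppose $\mathcal L$ is injective. Then $\mathcal L$ is bounded below on the finite-dimensional space $\Md$: there is $\lambda>0$ with $\|\mathcal L(W)\|_2\geq\lambda\normF{W}$, where $\lambda$ is the minimum of $\|\mathcal L\|$ over the Frobenius unit sphere, positive by compactness. By the triangle inequality,
\[
E_{\mathcal L,y}(W)\geq \tfrac12\bigl(\lambda\normF{W}-\|y\|_2\bigr)_+^2 .
\]
For an upper bound on the entropy I use \eqref{eq:entropy-norm-upper-general}: $S_N(W)\leq C_{N,d}+\alpha_{N,d}\log(1+\normF{W})$. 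Combining the two,
\[
F_{\beta,\mathcal L,y}(W)\geq \tfrac12\bigl(\lambda\normF{W}-\|y\|_2\bigr)_+^2-\tfrac1\beta C_{N,d}-\tfrac{\alpha_{N,d}}\beta\log(1+\normF{W}).
\]
The right-hand side is a function of $\rho=\normF{W}$ alone. It is continuous on $[0,\infty)$ and tends to $+\infty$, since quadratic growth beats logarithmic growth. It is therefore bounded below, and it diverges as $\rho\to\infty$. This yields both boundedness below and coercivity in norm at once. The same conclusion could be phrased through \cref{cor:Theta-consequences}(iii), using $\Theta_E(r)=O(\log r)$, but the direct estimate is shorter.

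\emph{Main obstacle.} The delicate point is that the entropy lower bound is not needed and must not be assumed. $S_N$ tends to $-\infty$ near the corank-$\geq2$ boundary, which only helps $F$, so only the upper bound \eqref{eq:entropy-norm-upper-general} enters. One should also check that this bound holds uniformly on all of $\GLd$, including small $\sigma_1$. That is exactly the constant adjustment made in the proof of \cref{lem:entropy-upper}.
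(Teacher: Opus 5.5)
Your proposal is correct and follows the paper's proof essentially step for step. The noninjective direction uses a kernel direction together with \cref{cor:affine-energy-obstruction}, and the injective direction combines a lower bound $\|\mathcal L(W)\|_2\geq\lambda\normF{W}$ with the logarithmic entropy bound \eqref{eq:entropy-norm-upper-general}. The only cosmetic difference is that you bound the loss below by $\tfrac12(\lambda\normF{W}-\|y\|_2)_+^2$ via the triangle inequality, whereas the paper uses $\|u-v\|_2^2\geq\tfrac12\|u\|_2^2-\|v\|_2^2$ to obtain $\tfrac{s_{\mathcal L}^2}{4}\normF{W}^2-\tfrac12\|y\|_2^2$.
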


\begin{proof}
Suppose first that $\mathcal L$ is not injective.  Choose $0\neq H\in\ker\mathcal L$ and any $W_0\in\GLd$.  Then
\[
E_{\mathcal L,y}(W_0+tH)
=
E_{\mathcal L,y}(W_0)
\]
for every $t$.  The conclusion follows from \cref{cor:affine-energy-obstruction}.

Now suppose that $\mathcal L$ is injective.  By finite dimensionality, there is $s_{\mathcal L}>0$ such that
\begin{equation*}
\|\mathcal L(W)\|_2\geq s_{\mathcal L}\normF{W}
\qquad\text{for all }W\in\Md.
\end{equation*}
The elementary inequality $\|u-v\|_2^2\geq\frac12\|u\|_2^2-\|v\|_2^2$ gives
\begin{equation*}
E_{\mathcal L,y}(W)
\geq
\frac{s_{\mathcal L}^2}{4}\normF{W}^2
-
\frac12\|y\|_2^2.
\end{equation*}
Combining this with \eqref{eq:entropy-norm-upper-general},
\begin{equation*}
F_{\beta,\mathcal L,y}(W)
\geq
\frac{s_{\mathcal L}^2}{4}\normF{W}^2
-
\frac{\alpha_{N,d}}{\beta}\log(1+\normF{W})
-C,
\end{equation*}
which tends to $+\infty$ with $\normF{W}$.  The same inequality gives a global lower bound.
\end{proof}

\begin{corollary}[Underdetermined matrix sensing]\label{cor:underdetermined-sensing}
On the unrestricted matrix space, every genuinely underdetermined linear sensing loss has unbounded DLN free energy.  In particular, if $P_\Omega$ observes a proper subset $\Omega$ of the $d^2$ entries, then
\[
\frac12\|P_\Omega(W)-y\|_F^2-\frac1\beta S_N(W)
\]
is unbounded below for every finite $\beta>0$.
\end{corollary}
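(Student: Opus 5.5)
The plan is to reduce \cref{cor:underdetermined-sensing} directly to the noninjective half of \cref{thm:linear-sensing-dichotomy}. First I will make ``genuinely underdetermined'' precise: a linear map $\mathcal L:\Md\to\mathbb R^m$ whose kernel is nontrivial. This happens automatically when $m<d^2$, by rank--nullity. Under this reading, the general statement is exactly the ``only if'' direction of \cref{thm:linear-sensing-dichotomy}. For completeness I will recall the argument. Pick $0\neq H\in\ker\mathcal L$ and any $W_0\in\GLd$, for instance $W_0=I_d$. Then $E_{\mathcal L,y}$ is constant along $W_0+tH$, while $S_N(W_0+tH)\to+\infty$ by \cref{lem:affine-ray}. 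Hence $F_\beta\to-\infty$ along the ray, which is what \cref{cor:affine-energy-obstruction} gives.

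For the entrywise case, I will identify $P_\Omega$ with the linear map $\Md\to\mathbb R^{|\Omega|}$, $W\mapsto(W_{ij})_{(i,j)\in\Omega}$. The Frobenius norm of the residual is then the Euclidean norm $\|P_\Omega(W)-y\|_2$, so the loss has the form $E_{\mathcal L,y}$. Since $\Omega$ is a proper subset, some index $(k,l)\notin\Omega$ exists. The elementary matrix $H=e_ke_l^T$ is then nonzero and satisfies $P_\Omega(H)=0$, so $P_\Omega$ is not injective.

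It may help to make the divergence explicit, which also serves as a sanity check. Take $W_0=I_d$. If $k=l$, then $I+tH$ is diagonal, and its largest singular value $|1+t|$ tends to infinity while the others equal $1$. If $k\neq l$, then $I+tH$ is a shear with determinant $1$ and $\sigma_1\sim|t|$. In either case \eqref{eq:entropy-weighted-singular-exact} gives $S_N\geq(d-1)\cN\log|t|-C$ for large $|t|$. The energy stays equal to $\frac12\|P_\Omega(I)-y\|^2$, so $F_\beta\leq C'-\frac{(d-1)\cN}{\beta}\log|t|\to-\infty$.

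I do not expect any real obstacle. The only care needed is in the interpretation: ``genuinely underdetermined'' should be read as noninjectivity of $\mathcal L$, not merely as $m<d^2$, so that the statement is sharp by the injective half of \cref{thm:linear-sensing-dichotomy}. One also has to check that the ray through $I_d$ stays in $\GLd$ for large $|t|$, which \cref{lem:affine-ray} already guarantees.
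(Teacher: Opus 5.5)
Your proposal is correct and follows the paper's route: the corollary is the noninjective half of \cref{thm:linear-sensing-dichotomy}, obtained from a kernel direction via \cref{lem:affine-ray} and \cref{cor:affine-energy-obstruction}, and for $P_\Omega$ any unobserved entry $(k,l)$ supplies the kernel element $e_ke_l^T$. Your explicit lower bound $S_N\geq(d-1)\cN\log|t|-C$ along $I_d+te_ke_l^T$, obtained from \eqref{eq:entropy-weighted-singular-exact}, is a correct concrete instance of the same argument; when $k\neq l$ it is exactly the shear of \cref{prop:shear}.
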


\begin{remark}
If the admissible matrices are restricted to a linear subspace $V\subset\Md$, injectivity in \cref{thm:linear-sensing-dichotomy} should be understood for $\mathcal L|_V$.  The affine-ray argument applies whenever $V\cap\GLd$ contains a point and $\ker(\mathcal L|_V)$ contains a nonzero direction through which one can remain in $V\cap\GLd$ for arbitrarily large parameter values.
\end{remark}

\subsection{Practical sufficient conditions}

The entropy estimate gives conditions that apply beyond quadratic sensing losses.

\begin{proposition}[Exterior-volume domination]\label{prop:exterior-domination}
Let $E:\GLd\to\mathbb R$ be bounded below and let $\Psi$ be defined by \eqref{eq:Psi-def}.
\begin{enumerate}[label=\textup{(\roman*)}]
\item If
\begin{equation*}
E(W)
\geq
\frac{\cN}{\beta}\Psi(W)-C,
\end{equation*}
then $F_\beta$ is bounded below.
\item If there is $\delta>0$ such that
\begin{equation}\label{eq:exterior-coercive-condition}
E(W)
\geq
\left(\frac{\cN}{\beta}+\delta\right)\Psi(W)-C,
\end{equation}
then $F_\beta(W)\to+\infty$ as $\normF{W}\to\infty$.
\end{enumerate}
\end{proposition}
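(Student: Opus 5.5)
The plan is to combine the upper bound \eqref{eq:entropy-Psi-upper} from \cref{lem:entropy-upper}, namely $S_N(W)\leq C_{N,d}+\cN\Psi(W)$, with the assumed lower bounds on $E$. Multiply the entropy bound by $1/\beta$ and subtract it from the hypothesis on $E$.

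For part (i), the hypothesis gives
\[
F_\beta(W)=E(W)-\tfrac1\beta S_N(W)\geq \tfrac{\cN}{\beta}\Psi(W)-C-\tfrac1\beta C_{N,d}-\tfrac{\cN}{\beta}\Psi(W)=-C-\tfrac{C_{N,d}}{\beta}.
\]
This is a uniform lower bound on $\GLd$. No information about the size of $\Psi$ is needed, because the two $\Psi$ terms cancel exactly.

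For part (ii), the same subtraction leaves the surplus
\[
F_\beta(W)\geq \delta\,\Psi(W)-C-\tfrac{C_{N,d}}{\beta}.
\]
It then suffices to show that $\Psi(W)\to+\infty$ as $\normF{W}\to\infty$. Every summand of $\Psi$ is nonnegative, and the $k=1$ summand is $\log^+\|W\|_{\mathrm{op}}$. Since $\|W\|_{\mathrm{op}}\geq \normF{W}/\sqrt d$, we get
\[
\Psi(W)\geq \log^+\bigl(\normF{W}/\sqrt d\bigr)\longrightarrow\infty.
\]
Hence $F_\beta(W)\to+\infty$. This relies on $d\geq2$, so that the sum defining $\Psi$ contains the $k=1$ term; for $d=1$, $\Psi\equiv0$ and the statement would need separate treatment, but the standing assumption is $d\geq2$.

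There is no real obstacle here. The only point requiring care is to use the one-sided inequality \eqref{eq:entropy-Psi-upper} in the right direction, and to note that the positive parts in $\Psi$ make the lower bound $\Psi\geq\log^+\|W\|_{\mathrm{op}}$ immediate. The bound $\|W\|_{\mathrm{op}}\geq\normF{W}/\sqrt d$ then holds because $\normF{W}^2$ is the sum of $d$ squared singular values, each at most $\|W\|_{\mathrm{op}}^2$.
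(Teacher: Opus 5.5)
Your proof is correct and matches the paper's argument: you subtract $\beta^{-1}$ times \eqref{eq:entropy-Psi-upper} from the hypothesis, which gives exact cancellation in (i) and leaves the surplus $\delta\Psi(W)$ in (ii), and you then use that the $k=1$ summand $\log^+\|W\|_{\mathrm{op}}$ of $\Psi$ diverges as $\normF{W}\to\infty$. Your comparison $\|W\|_{\mathrm{op}}\geq\normF{W}/\sqrt d$ just makes explicit a step the paper leaves implicit.
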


\begin{proof}
The first claim follows immediately from \eqref{eq:entropy-Psi-upper}.  Under \eqref{eq:exterior-coercive-condition},
\[
F_\beta(W)\geq\delta\Psi(W)-C'.
\]
Since the $k=1$ summand in $\Psi$ is $\log^+\|W\|_{\mathrm{op}}$, the right-hand side tends to $+\infty$ when $\normF{W}\to\infty$.
\end{proof}

\begin{corollary}[Norm-growth criteria]\label{cor:norm-growth-criteria}
Let $\alpha_{N,d}=\cN\binom d2$.
\begin{enumerate}[label=\textup{(\roman*)}]
\item If
\begin{equation*}
E(W)
\geq
\frac{\alpha_{N,d}}{\beta}\log(1+\normF{W})-C,
\end{equation*}
then $F_\beta$ is bounded below.
\item If for some $\delta>0$,
\begin{equation*}
E(W)
\geq
\left(\frac{\alpha_{N,d}}{\beta}+\delta\right)
\log(1+\normF{W})-C,
\end{equation*}
then $F_\beta$ is coercive in norm.
\item If
\begin{equation}\label{eq:superlog-condition}
\frac{E(W)}{\log(1+\normF{W})}
\longrightarrow+\infty
\qquad\text{as }\normF{W}\to\infty,
\end{equation}
then $F_\beta$ is coercive in norm for every finite $\beta>0$.
\item In particular, any estimate
\begin{equation*}
E(W)\geq a\normF{W}^p-C,
\qquad a>0,
\quad p>0,
\end{equation*}
implies norm coercivity of $F_\beta$ for every finite $\beta>0$.
\end{enumerate}
\end{corollary}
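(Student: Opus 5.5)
The plan is to reduce all four parts to \cref{prop:exterior-domination} together with the comparison between $\Psi$ and the Frobenius norm. First I record the elementary bound
\[
\Psi(W)=\sum_{k=1}^{d-1}\log^+(\sigma_1\cdots\sigma_k)\leq\sum_{k=1}^{d-1}k\log^+\sigma_1(W)=\binom d2\log^+\|W\|_{\mathrm{op}}\leq\binom d2\log(1+\normF{W}),
\]
using $\sigma_i\leq\sigma_1$, monotonicity of $\log^+$, and $\|W\|_{\mathrm{op}}\leq\normF{W}$. Multiplying by $\cN$ gives $\cN\Psi(W)\leq\alpha_{N,d}\log(1+\normF{W})$.

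For part (i), the hypothesis and this bound give $E(W)\geq\frac{\cN}{\beta}\Psi(W)-C$, so \cref{prop:exterior-domination}(i) applies. Alternatively one can insert the hypothesis directly into \eqref{eq:entropy-norm-upper-general}.

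For part (ii), I would not route through $\Psi$, because \cref{prop:exterior-domination}(ii) needs $\delta\Psi$ and $\Psi$ is dominated by the norm logarithm rather than the reverse. Instead I combine the hypothesis directly with \eqref{eq:entropy-norm-upper-general}:
\[
F_\beta(W)\geq\Bigl(\frac{\alpha_{N,d}}{\beta}+\delta\Bigr)\log(1+\normF{W})-C-\frac{1}{\beta}\bigl(C_{N,d}+\alpha_{N,d}\log(1+\normF{W})\bigr)=\delta\log(1+\normF{W})-C'.
\]
The right-hand side tends to $+\infty$ as $\normF{W}\to\infty$.

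For part (iii), fix $\beta$ and set $\delta=1$. The superlogarithmic hypothesis gives $R$ such that $E(W)\geq(\alpha_{N,d}/\beta+1)\log(1+\normF{W})$ whenever $\normF{W}\geq R$. On $\normF{W}<R$ we have $E\geq\inf E$, since $E$ is bounded below (the standing normalization $E\geq0$), and $\log(1+\normF{W})<\log(1+R)$. Hence the inequality of part (ii) holds globally with a suitable constant $C$, and part (ii) gives coercivity.

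For part (iv), $a\normF{W}^p-C$ divided by $\log(1+\normF{W})$ tends to $+\infty$, so part (iii) applies.

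The only delicate point is this global extension of the asymptotic bound in part (iii), and it needs exactly the bounded-below assumption.
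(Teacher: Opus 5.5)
Your proposal is correct and matches the paper's proof, which inserts each hypothesis into the norm bound \eqref{eq:entropy-norm-upper-general} and observes that parts (iii) and (iv) give the strict logarithmic domination of part (ii) outside a sufficiently large ball. Two small asides. The global extension in part (iii) is unnecessary, since norm coercivity only concerns $\normF{W}\to\infty$, so the bounded-below assumption is not really used there. Also, part (ii) could be routed through \cref{prop:exterior-domination} after all: your bound $\Psi\leq\binom d2\log(1+\normF{W})$ turns the extra $\delta\log(1+\normF{W})$ into $\bigl(\delta/\binom d2\bigr)\Psi$.
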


\begin{proof}
Use \eqref{eq:entropy-norm-upper-general}.  Parts \textup{(iii)} and \textup{(iv)} imply the strict logarithmic domination in part \textup{(ii)} outside a sufficiently large ball.
\end{proof}

The adjective ``superlogarithmic'' in \eqref{eq:superlog-condition} is essential.  Ordinary coercivity of $E$ is not enough: a coercive energy that grows more slowly than $\log\normF{W}$ can still be dominated by $S_N$ along scalar rays.

A sublevel-set version is sometimes more convenient.  Define
\begin{equation*}
R_E(r)
=
\sup\{\normF{W}:W\in\GLd,\ E(W)\leq r\}
\in[0,+\infty].
\end{equation*}
Whenever $R_E(r)<\infty$, \eqref{eq:entropy-norm-upper-general} implies
\begin{equation*}
\Theta_E(r)
\leq
C_{N,d}+\alpha_{N,d}\log(1+R_E(r)).
\end{equation*}
Thus the condition
\begin{equation*}
\sup_{r\geq0}
\left[
\alpha_{N,d}\log(1+R_E(r))-\beta r
\right]<\infty
\end{equation*}
is sufficient for boundedness below.  In particular,
\begin{equation*}
\log(1+R_E(r))=o(r)
\end{equation*}
is sufficient for every $\beta>0$.

\subsection{Boundedness, coercivity, and attainment}

Even norm coercivity does not automatically yield a minimizer on $\GLd$, because $\GLd$ is open and a minimizing sequence may approach the rank boundary.  The boundary behavior of the entropy explains exactly where an additional hypothesis is needed.

\begin{proposition}[Entropy at the finite rank boundary]\label{prop:entropy-boundary-general}
Let $W_m\in\GLd$ be bounded and suppose $W_m\to W_*$.
\begin{enumerate}[label=\textup{(\roman*)}]
\item If $\rank W_*\leq d-2$, then $S_N(W_m)\to-\infty$.
\item If $\rank W_*=d-1$, then $S_N(W_m)$ has a finite limit determined by the $d-1$ positive limiting singular values.  More precisely, if these are $\tau_1,\ldots,\tau_{d-1}>0$, then
\begin{align}\label{eq:corank-one-entropy-limit}
\lim_{m\to\infty}S_N(W_m)
&=
\sum_{1\leq i<j\leq d-1}q_N(\tau_i,\tau_j)
+
\cN\sum_{i=1}^{d-1}\log\tau_i.
\end{align}
\end{enumerate}
\end{proposition}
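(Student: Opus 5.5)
The plan is to work directly with the pairwise decomposition $S_N=\sum_{i<j}q_N(\sigma_i,\sigma_j)$ together with continuity of singular values. Order the singular values of $W_m$ as $\sigma_1(W_m)\geq\cdots\geq\sigma_d(W_m)>0$. Since $W_m\to W_*$ and singular values are continuous (Weyl's inequality), $\sigma_i(W_m)\to\sigma_i(W_*)$ for every $i$. Because $W_m$ is bounded, the top singular values stay in a compact set. The rank of $W_*$ tells us which of the limits vanish.

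For part (i), I would use the exact representation \eqref{eq:entropy-weighted-singular-exact}:
\[
S_N(W_m)=\cN\sum_{i=1}^{d-1}(d-i)\log\sigma_i(W_m)+R_N(W_m),
\]
where $0\le R_N\le\frac12\binom d2\log N$. If $\rank W_*\le d-2$, then $\sigma_{d-1}(W_*)=0$, so $\sigma_{d-1}(W_m)\to0$. That term enters with coefficient $\cN(d-(d-1))=\cN>0$, so its contribution tends to $-\infty$. The remaining terms $\log\sigma_i(W_m)$ are bounded above, since $\sigma_i\le\sup_m\normF{W_m}$, and each enters with a positive coefficient. The remainder $R_N$ is bounded. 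Hence $S_N(W_m)\to-\infty$. This part is immediate.

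For part (ii), the case $\rank W_*=d-1$, we have $\sigma_i(W_m)\to\tau_i>0$ for $i\le d-1$ and $\sigma_d(W_m)\to0$. I would split
\[
S_N=\sum_{1\le i<j\le d-1}q_N(\sigma_i,\sigma_j)+\sum_{i=1}^{d-1}q_N(\sigma_i,\sigma_d).
\]
The first sum converges to $\sum_{i<j\le d-1}q_N(\tau_i,\tau_j)$ by the joint continuity (indeed analyticity) of $q_N$ on $(0,\infty)^2$ from \cref{lem:q-basic}(i). This includes the case of repeated $\tau_i$, which is handled by the analytic continuation on the diagonal.

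For each boundary pair I would use \eqref{eq:q-ratio}: since $\sigma_i\ge\sigma_d$,
\[
q_N(\sigma_i,\sigma_d)=\cN\log\sigma_i+h_N(\sigma_d/\sigma_i).
\]
The ratio $\sigma_d/\sigma_i\to0/\tau_i=0$, and $h_N$ is continuous on $[0,1]$ with $h_N(0)=0$. So each boundary term tends to $\cN\log\tau_i$, and summing gives \eqref{eq:corank-one-entropy-limit}.

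The only point needing care is that continuity of $h_N$ at $0$ is genuine. Here $h_N(r)=\frac12\log\frac{1-r^2}{1-r^{2/N}}$, and both numerator and denominator tend to $1$ as $r\to0$, so $h_N(r)\to0$. This requires no uniformity beyond the fact that $\tau_i$ stays bounded away from $0$, which handles the ratio convergence. I expect no real obstacle.
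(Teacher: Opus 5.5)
Your proof is correct and follows the paper's argument. Part (ii) is identical to the paper's: continuity of $q_N$ for the pairs among the first $d-1$ singular values, and \eqref{eq:q-ratio} with $h_N(0)=0$ for the boundary pairs. For part (i) you obtain the divergence from the weighted-log representation \eqref{eq:entropy-weighted-singular-exact}, via the positive coefficient $\cN$ on $\log\sigma_{d-1}$, whereas the paper isolates the pair term $q_N(\sigma_{d-1},\sigma_d)$ and bounds the other pairs above; this is the same mechanism with different bookkeeping.
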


\begin{proof}
If at least two singular values tend to zero, the pair term involving those two singular values tends to $-\infty$ by the homogeneity relation \eqref{eq:q-homogeneity}; all pair terms are bounded above on a bounded set by \cref{lem:entropy-upper}.  This proves part \textup{(i)}.  If only $\sigma_d\to0$, then for each $i<d$, \eqref{eq:q-ratio} and $h_N(0)=0$ give
\[
q_N(\sigma_i,\sigma_d)\longrightarrow\cN\log\tau_i,
\]
while all pairs among the first $d-1$ singular values converge continuously.  Summing proves \eqref{eq:corank-one-entropy-limit}.
\end{proof}

\begin{corollary}[A sufficient condition for attainment]\label{cor:attainment-condition}
Assume that $E:\GLd\to\mathbb R$ is continuous and bounded below, that $F_\beta$ is coercive in norm, and that
\begin{equation}\label{eq:energy-corank-one-barrier}
E(W_m)\longrightarrow+\infty
\end{equation}
for every bounded sequence $W_m\in\GLd$ converging to a matrix of rank $d-1$.  Then every sublevel set of $F_\beta$ is compact in $\GLd$, and $F_\beta$ attains a global minimum.
\end{corollary}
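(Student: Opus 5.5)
The plan is to show that every sublevel set $\{F_\beta\leq c\}\subset\GLd$ is closed in $\Md$, bounded, and at positive distance from the rank boundary. Compactness in $\GLd$ then follows, and continuity of $F_\beta$ on a nonempty compact sublevel set gives a minimizer. Fix $c$ with the sublevel set $K_c=\{W\in\GLd:F_\beta(W)\leq c\}$ nonempty.

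\emph{Boundedness.} This comes directly from norm coercivity. Since $F_\beta(W)\to+\infty$ as $\normF{W}\to\infty$, there is $R$ with $F_\beta>c$ outside the ball of radius $R$, so $K_c$ lies in that ball.

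\emph{Closedness in $\Md$.} Take $W_m\in K_c$ with $W_m\to W_*\in\Md$. If $W_*\in\GLd$, continuity of $E$ and $S_N$ on $\GLd$ gives $F_\beta(W_*)\leq c$, so $W_*\in K_c$. It remains to exclude $\det W_*=0$, and I split by rank, using \cref{prop:entropy-boundary-general}.
\begin{itemize}
\item If $\rank W_*\leq d-2$, then $S_N(W_m)\to-\infty$, so $-\beta^{-1}S_N(W_m)\to+\infty$. Since $E$ is bounded below, $F_\beta(W_m)\to+\infty$, which contradicts $F_\beta(W_m)\leq c$.
\item If $\rank W_*=d-1$, then $S_N(W_m)$ converges to the finite limit \eqref{eq:corank-one-entropy-limit}, so the entropy term stays bounded. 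By hypothesis \eqref{eq:energy-corank-one-barrier}, $E(W_m)\to+\infty$. Hence $F_\beta(W_m)\to+\infty$, again a contradiction.
\end{itemize}
In both cases I only need that $S_N(W_m)$ is bounded above along bounded sequences, which is \eqref{eq:entropy-upper}. So the corank-one case could even use just this upper bound rather than the exact limit.

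\emph{Conclusion.} Thus $K_c$ is a closed and bounded subset of $\Md$ contained in $\GLd$, hence compact, and in particular compact in $\GLd$. A continuous function attains its minimum on the nonempty compact set $K_c$, and that minimum is global because $F_\beta>c$ off $K_c$.

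There is no real obstacle here. The one point needing care is the corank-one case. There the entropy does not blow up, so the barrier must come entirely from the energy, which is exactly why the hypothesis \eqref{eq:energy-corank-one-barrier} is needed. By contrast, higher corank is handled automatically by the entropy.
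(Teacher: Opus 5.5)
Your proof is correct and follows the same route as the paper. Norm coercivity bounds the sublevel set, the entropy blow-up in \cref{prop:entropy-boundary-general}(i) together with the lower bound on $E$ excludes limits of rank at most $d-2$, the energy barrier \eqref{eq:energy-corank-one-barrier} with bounded entropy excludes rank $d-1$, and the direct method then gives a minimizer.

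One correction to your aside: only the corank-one case works with the upper bound \eqref{eq:entropy-upper} alone. When $\rank W_*\leq d-2$ the hypothesis gives no control on $E$, so you genuinely need $S_N(W_m)\to-\infty$, which is what your first bullet correctly uses.
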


\begin{proof}
Norm coercivity bounds every sublevel set in $\Md$.  A sequence in such a set cannot approach rank at most $d-2$, because \cref{prop:entropy-boundary-general} and the lower bound on $E$ would force $F_\beta\to+\infty$.  It cannot approach rank $d-1$ because of \eqref{eq:energy-corank-one-barrier} and the finite entropy limit in \eqref{eq:corank-one-entropy-limit}.  Hence the sublevel set is closed in a bounded subset of $\Md$ and stays a positive distance from the rank boundary.  It is compact in $\GLd$, and the direct method gives a minimizer.
\end{proof}

For example, ridge-regularized completion,
\begin{equation*}
E_{\Omega,\lambda}(W)
=
\frac12\|P_\Omega(W)-Y\|_F^2
+
\frac\lambda2\normF{W}^2,
\qquad\lambda>0,
\end{equation*}
is norm-coercive and therefore makes $F_\beta$ bounded below.  The ridge term does not create a corank-one barrier, however, so additional treatment of lower-rank strata or a barrier such as $-\mu\log\det(W^TW)$ is still needed if one insists on a minimizer inside $\GLd$.

\subsection{When entropy does define a selection principle}

The usual low-temperature selection statement becomes valid once compactness and attainment are supplied.

\begin{proposition}[Low-temperature entropy selection]\label{prop:low-temp-selection}
Let $\mathcal X$ be a closed subset of a finite-dimensional Euclidean space, and let $E,S:\mathcal X\to\mathbb R$ be continuous.  Put
\[
m=\min_{\mathcal X}E,
\qquad
\mathcal M=\operatorname*{argmin}_{\mathcal X}E,
\]
and assume $\mathcal M\neq\varnothing$.  Suppose that for every sufficiently large $\beta$, the function
\[
F_\beta=E-\beta^{-1}S
\]
has a minimizer $W_\beta$, and that the family $\{W_\beta\}$ is precompact.  Then every limit point $W_*$ of $W_\beta$ as $\beta\to\infty$ satisfies
\begin{equation}\label{eq:low-temp-selection-conclusion}
W_*\in\operatorname*{argmax}_{W\in\mathcal M}S(W).
\end{equation}
If the entropy has a unique maximizer on $\mathcal M$, then $W_\beta$ converges to it.
\end{proposition}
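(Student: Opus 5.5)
The plan is the standard $\Gamma$-convergence-type comparison argument, using only continuity and minimality. Fix a limit point $W_*$, so that $W_{\beta_k}\to W_*$ along a sequence $\beta_k\to\infty$. Since $\{W_\beta\}$ is precompact and $\mathcal X$ is closed, $W_*\in\mathcal X$. By continuity of $S$ on the compact closure $K$ of $\{W_\beta\}$, the quantity $M_K=\sup_K|S|$ is finite. Fix also an arbitrary comparison point $W\in\mathcal M$; this is possible because $\mathcal M\neq\varnothing$.

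The first step shows that $W_*\in\mathcal M$. Minimality of $W_{\beta_k}$ gives
\[
E(W_{\beta_k})-\beta_k^{-1}S(W_{\beta_k})\leq E(W)-\beta_k^{-1}S(W)=m-\beta_k^{-1}S(W),
\]
and therefore
\[
m\leq E(W_{\beta_k})\leq m+\beta_k^{-1}\bigl(M_K+|S(W)|\bigr).
\]
Letting $k\to\infty$ and using continuity of $E$ yields $E(W_*)=m$.

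The second step shows that $W_*$ maximizes $S$ on $\mathcal M$. Because $E(W_{\beta_k})\geq m=E(W)$, the same minimality inequality rearranges to
\[
\beta_k^{-1}S(W_{\beta_k})\geq \beta_k^{-1}S(W)+\bigl(E(W_{\beta_k})-m\bigr)\geq\beta_k^{-1}S(W).
\]
Multiplying by $\beta_k$ gives $S(W_{\beta_k})\geq S(W)$. Passing to the limit by continuity of $S$ gives $S(W_*)\geq S(W)$. Since $W\in\mathcal M$ was arbitrary and $W_*\in\mathcal M$, this proves \eqref{eq:low-temp-selection-conclusion}. In particular, $S$ attains its maximum on $\mathcal M$ whenever at least one limit point exists, and precompactness guarantees that one does.

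For the uniqueness clause, suppose $S$ has a unique maximizer $\bar W$ on $\mathcal M$. Every limit point of $W_\beta$ as $\beta\to\infty$ then equals $\bar W$. A precompact family all of whose subsequential limits coincide converges: otherwise some sequence $\beta_k\to\infty$ would have $W_{\beta_k}$ staying a distance at least $\eta>0$ from $\bar W$, and a convergent subsequence of it would produce a limit point different from $\bar W$. There is no real obstacle in this argument. The only points requiring care are that the bound $M_K$ is available because $S$ is continuous on the compact set $K$, and that the step $W_*\in\mathcal X$ uses closedness of $\mathcal X$; no structure specific to $S_N$ is needed.
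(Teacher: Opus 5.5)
Your proposal is correct and follows essentially the same comparison argument as the paper: you compare $F_\beta$ at $W_\beta$ with $F_\beta$ at a fixed $V\in\mathcal M$, use boundedness of $S$ on the precompact family to get $E(W_\beta)\to m$ and $S(W_\beta)\geq S(V)$, and then pass to limits. Your extra remarks, namely that closedness of $\mathcal X$ puts the closure of $\{W_\beta\}$ inside $\mathcal X$ and the explicit subsequence argument for the uniqueness clause, just make explicit steps the paper leaves implicit.
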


\begin{proof}
Fix $V\in\mathcal M$.  Minimality of $W_\beta$ gives
\[
E(W_\beta)-\frac1\beta S(W_\beta)
\leq
m-\frac1\beta S(V),
\]
so
\begin{equation}\label{eq:selection-comparison}
0\leq E(W_\beta)-m
\leq
\frac{S(W_\beta)-S(V)}{\beta}.
\end{equation}
Precompactness and continuity make $S(W_\beta)$ bounded along every sequence $\beta\to\infty$, so \eqref{eq:selection-comparison} implies $E(W_\beta)\to m$.  Every limit point therefore belongs to $\mathcal M$.  The same inequality also gives
\[
S(W_\beta)\geq S(V)
\qquad\text{for every }V\in\mathcal M.
\]
Passing to a convergent subsequence yields $S(W_*)\geq S(V)$ for every $V\in\mathcal M$, proving \eqref{eq:low-temp-selection-conclusion}.  Uniqueness of the entropy maximizer then forces the whole precompact family to converge.
\end{proof}

Thus a natural setting for entropic selection is a compact energy-minimizing set on which the entropy is continuous and has a distinguished maximum.  Diagonal completion violates this picture in the strongest possible way: its minimizing set is noncompact and its entropy is unbounded above there.  Instead of choosing a finite minimizer, the entropic term drives the system toward larger and larger unobserved directions.

\subsection{Applied energies: how to read the examples}
\label{subsec:applied-examples}

The preceding results separate three questions that should be kept distinct in applications.  First, does the energy $E$ attain its infimum on the natural closed parameter space?  Second, is
\[
F_\beta(W)=E(W)-\frac{1}{\beta}S_N(W)
\]
bounded below, or even coercive in norm?  Third, if the state space is the open manifold $\GLd$, is the infimum of $F_\beta$ attained away from the rank boundary?  An energy may have many minimizers while $F_\beta$ is unbounded below, as happens for underdetermined least squares.  Conversely, $F_\beta$ may be norm-coercive even when the minimizer of $E$ lies on the excluded rank boundary.  The examples below record the energy, its elementary variational properties, and the conclusion supplied by the criteria in this section.  No proofs are repeated.

The universal estimate
\[
S_N(W)\leq C_{N,d}+\alpha_{N,d}\log(1+\normF{W}),
\qquad
\alpha_{N,d}=\left(1-\frac1N\right)\binom d2,
\]
should be kept in mind throughout.  Positive polynomial growth of $E$ dominates the entropy at infinity, whereas any nontrivial affine direction on which $E$ stays bounded forces $F_\beta$ to be unbounded below.  Logarithmically growing energies lie at the threshold and must be tested direction by direction.  Finally, norm coercivity does not by itself settle attainment in $\GLd$, because the entropy has a finite limit at a generic corank-one boundary point; see \cref{prop:entropy-boundary-general,cor:attainment-condition}.

\subsection{Quadratic losses, inverse problems, and low-rank regularization}
\label{subsec:quadratic-applied-energies}

\subsubsection{Full quadratic loss and Gaussian matrix denoising}

The basic full-observation energy is
\begin{equation*}
E_A(W)=\frac{1}{2\sigma^2}\normF{W-A}^2,
\qquad A\in\Md,
\end{equation*}
where $\sigma^2$ may be interpreted as a noise variance.  This is the negative log-likelihood, up to an additive constant, for observing $A$ in isotropic Gaussian matrix noise.  Such losses also underlie singular-value shrinkage and matrix denoising \cite{GavishDonoho2017}.

\emph{Properties of the energy.}
On the closed space $\Md$, the unique minimizer is $A$, and the energy has quadratic growth.  If $A\in\GLd$, the energy minimum is attained in the full-rank state space.  If $A$ is singular, the infimum of $E_A|_{\GLd}$ equals zero but is not attained.

\emph{Conclusion for the free energy.}
By \cref{cor:norm-growth-criteria}, $F_\beta$ is bounded below and norm-coercive for every depth and every finite $\beta>0$.  If $A$ is singular, or more generally if a minimizing sequence can approach corank one, norm coercivity alone does not decide whether the free-energy infimum is attained in $\GLd$.  An explicit rank-boundary barrier gives attainment by \cref{cor:attainment-condition}.

\subsubsection{Injective squared sensing and identifiable least squares}

Let $\mathcal L:\Md\to\mathbb R^m$ be linear and consider
\begin{equation}\label{eq:applied-injective-sensing}
E_{\mathcal L,y}(W)
=
\frac12\|\mathcal L(W)-y\|_2^2.
\end{equation}
This form includes multivariate linear regression, linear system identification, and matrix sensing.  Low-rank matrix-sensing theory often assumes a restricted isometry on low-rank matrices rather than injectivity on all of $\Md$ \cite{CandesPlan2011}.

\emph{Properties of the energy.}
If $\mathcal L$ is injective on the actual state space, then $\mathcal L^*\mathcal L$ is positive definite there.  The least-squares energy has a unique minimizer and satisfies a quadratic lower bound
\[
E_{\mathcal L,y}(W)\geq c\normF{W}^2-C.
\]
If the state space is a proper linear model $V\subset\Md$, the relevant hypothesis is injectivity of $\mathcal L|_V$.

\emph{Conclusion for the free energy.}
The linear-sensing dichotomy, \cref{thm:linear-sensing-dichotomy}, shows that $F_\beta$ is norm-coercive and bounded below for every $N$ and $\beta$.  As before, attainment inside $\GLd$ requires either a separate corank-one argument, a closed stratified state space, or a boundary barrier.

\subsubsection{Underdetermined least squares and matrix completion}

The same energy \eqref{eq:applied-injective-sensing} behaves completely differently when $\ker\mathcal L\neq\{0\}$.  Entrywise matrix completion is the special case
\begin{equation*}
E_\Omega(W)
=
\frac12\|P_\Omega(W-Y)\|_F^2,
\end{equation*}
where $P_\Omega$ retains the observed entries.  Matrix completion and underdetermined matrix sensing are central models in low-rank recovery \cite{CandesRecht2009,CandesPlan2011,RechtFazelParrilo2010}.

\emph{Properties of the energy.}
A nonzero $H\in\ker\mathcal L$ gives
\[
E_{\mathcal L,y}(W_0+tH)=E_{\mathcal L,y}(W_0)
\]
for every $t$.  The ordinary energy may nevertheless attain its minimum, often on a noncompact affine set.  For incomplete entry observations on unrestricted matrices, $P_\Omega$ is noninjective whenever at least one entry is unobserved.

\emph{Conclusion for the free energy.}
By \cref{cor:affine-energy-obstruction,thm:linear-sensing-dichotomy}, $F_\beta$ is unbounded below for every finite $\beta>0$.  Thus the existence of exact or least-squares minimizers of $E$ does not produce an entropic selection principle.  The conclusion can change after restricting to a smaller model class on which the observation map is injective, or after introducing a confining penalty or proper prior.

\subsubsection{Ridge and Tikhonov regularization}

A standard confining modification is
\begin{equation*}
E_{\lambda}(W)
=
\frac12\|\mathcal L(W)-y\|_2^2
+
\frac{\lambda}{2}\normF{W}^2,
\qquad \lambda>0.
\end{equation*}
This is ridge or Tikhonov regularization, and in a Bayesian interpretation it corresponds to a centered Gaussian prior.  Ridge regression was introduced in \cite{HoerlKennard1970}; modern generalized-linear-model implementations are discussed in \cite{FriedmanHastieTibshirani2010}.

\emph{Properties of the energy.}
The energy is strongly convex on $\Md$, has a unique minimizer, and satisfies
\[
E_\lambda(W)\geq\frac{\lambda}{2}\normF{W}^2.
\]
This remains true whether or not $\mathcal L$ is injective.

\emph{Conclusion for the free energy.}
The free energy is bounded below and norm-coercive for every $N$ and $\beta$.  Ridge regularization therefore repairs the escape-to-infinity defect of matrix completion.  It does not itself diverge at rank loss, so attainment in $\GLd$ is still a separate boundary question.  Moreover, the ridge term supplies its own explicit selection bias, so any resulting selection is not attributable to entropy alone.

\subsubsection{Nuclear-norm regularization}

A common convex surrogate for low rank is
\begin{equation*}
E_{*,\lambda}(W)
=
\frac12\|\mathcal L(W)-y\|_2^2
+
\lambda\|W\|_*,
\qquad
\|W\|_*=\sum_{i=1}^d\sigma_i(W).
\end{equation*}
Nuclear-norm penalties are used throughout matrix completion, trace regression, and low-rank recovery \cite{RechtFazelParrilo2010,KoltchinskiiLouniciTsybakov2011}.

\emph{Properties of the energy.}
The energy is convex and coercive on $\Md$ for $\lambda>0$, hence it attains a minimum.  Its minimizers are frequently rank deficient, which is the intended regularizing effect.  The nuclear norm has linear rather than quadratic growth.

\emph{Conclusion for the free energy.}
Since $\|W\|_*\geq\normF{W}$, the free energy is norm-coercive for every $N$ and $\beta$.  The rank boundary remains important: both $\|W\|_*$ and $S_N$ have finite limits at a generic corank-one point.  If a zero singular value is opened by an amount $s$, the nuclear penalty changes at order $s$, while for $N>2$ the leading entropy gain is of order $s^{2/N}$ and for $N=2$ it is also linear.  Thus depth two is a coefficient-sensitive borderline case, and a global attainment statement requires a separate rank-boundary analysis.

\subsubsection{Layerwise quadratic weight decay and factor-induced Schatten penalties}

For a depth-$N$ factorization $W=W_N\cdots W_1$, the usual factor-space penalty is
\begin{equation*}
\mathcal R(W_1,\ldots,W_N)
=
\frac{\lambda}{2}\sum_{j=1}^N\normF{W_j}^2.
\end{equation*}
When the intermediate widths are large enough, minimizing this penalty over factorizations with fixed product gives
\begin{equation*}
\inf_{W_N\cdots W_1=W}\mathcal R(W_1,\ldots,W_N)
=
\frac{\lambda N}{2}\sum_{i=1}^d\sigma_i(W)^{2/N}.
\end{equation*}
The variational relation between factor penalties and Schatten-type penalties is used throughout deep matrix factorization and low-rank recovery \cite{GunasekarEtAl2017,AroraCohenHuLuo2019,GiampourasEtAl2020}; for two interacting matrices, the connection between weight decay, nuclear-norm regularization, and low-rank attention products is studied in \cite{KobayashiAkramVonOswald2024}.

\emph{Properties of the energy.}
The induced end-to-end penalty has positive-power growth at infinity, so it is confining.  For $N>2$ it is nonconvex and non-Lipschitz at a zero singular value.  Its small-singular-value scale is exactly $s^{2/N}$.

\emph{Conclusion for the free energy.}
The positive-power growth dominates the logarithmic entropy at infinity, so the effective free energy is norm-coercive.  Near a corank-one boundary point, however, the induced penalty and the entropy change on the same $s^{2/N}$ scale.  Consequently, whether a minimizer remains on the boundary or moves into $\GLd$ can depend on $\lambda$, $\beta$, the nonzero singular values, and the depth.  Factor-space weight decay therefore guarantees confinement but does not automatically settle full-rank attainment.

\subsection{Classification and margin-based energies}\label{subsec:classification-energies}

\subsubsection{Separable logistic and exponential losses}

For matrix-valued features $X_1,\ldots,X_M$ and labels $y_j\in\{-1,1\}$, consider
\begin{align}\label{eq:applied-logistic}
E_{\log}(W)
&=
\sum_{j=1}^M
\log\left(1+\exp\bigl(-y_j\langle W,X_j\rangle_F\bigr)\right),\\
E_{\exp}(W)
&=
\sum_{j=1}^M
\exp\bigl(-y_j\langle W,X_j\rangle_F\bigr).\notag
\end{align}
The existence of finite logistic maximum-likelihood estimators is governed by separation and overlap conditions \cite{AlbertAnderson1984}; the diverging-gradient-descent regime on separable data is analyzed in \cite{SoudryEtAl2018}.

\emph{Properties of the energy.}
If there is a separating matrix $H$ satisfying
\[
y_j\langle H,X_j\rangle_F>0
\qquad\text{for every }j,
\]
then $E_{\log}(W_0+tH)\to0$ and $E_{\exp}(W_0+tH)\to0$ as $t\to\infty$.  Their infimum is generally not attained at a finite parameter.

\emph{Conclusion for the free energy.}
The energy stays bounded along a nontrivial affine ray, so \cref{cor:affine-energy-obstruction} gives
\[
\inf_{\GLd}F_\beta=-\infty
\]
for every finite $\beta$.  The entropy does not cure the classical optimum-at-infinity phenomenon; it reinforces the failure of confinement.

\subsubsection{Identifiable nonseparable logistic loss}

The energy is again $E_{\log}$ in \eqref{eq:applied-logistic}, but the data are assumed to satisfy the usual no-separation and no-quasi-separation conditions, and the feature map is identifiable.

\emph{Properties of the energy.}
Under these hypotheses, logistic regression has a finite maximum-likelihood estimator \cite{AlbertAnderson1984}.  If the design spans the parameter space and there is positive recession in every nonzero direction, compactness of the unit sphere gives a linear lower bound
\[
E_{\log}(W)\geq c\normF{W}-C.
\]
Gauge symmetries or unobserved directions must first be removed; otherwise the energy is constant on affine fibers despite the absence of statistical separation.

\emph{Conclusion for the free energy.}
Linear growth is superlogarithmic, so $F_\beta$ is bounded below and norm-coercive for every finite $\beta$.  If the parametrization retains a nontrivial gauge direction, the affine-ray obstruction applies instead.  Thus identifiability is as important here as nonseparability.

\subsubsection{Multiclass softmax loss and gauge invariance}

For class-weight rows $w_1,\ldots,w_C$ and examples $(x_j,y_j)$, the multiclass cross-entropy is
\begin{equation*}
E_{\mathrm{soft}}(W)
=
-\sum_{j=1}^M
\log
\frac{\exp(w_{y_j}^{T}x_j)}{\sum_{c=1}^C\exp(w_c^Tx_j)}.
\end{equation*}
Multinomial logistic models and their regularization are discussed in \cite{FriedmanHastieTibshirani2010}.

\emph{Properties of the energy.}
For every $a$, adding the same score $a^Tx$ to each class leaves all softmax probabilities unchanged.  In matrix notation,
\[
E_{\mathrm{soft}}(W+\mathbf 1a^T)=E_{\mathrm{soft}}(W).
\]
This is an exact affine gauge symmetry, independent of whether the data are separable.

\emph{Conclusion for the free energy.}
On an unrestricted noncompact parametrization, the gauge direction forces $F_\beta$ to be unbounded below.  One must fix a reference class, impose a sum-to-zero gauge, quotient by the symmetry, or add a proper confining prior before applying an entropic selection argument.  

\subsubsection{Separable unregularized hinge loss}

The empirical hinge energy is
\begin{equation*}
E_{\mathrm{hinge}}(W)
=
\sum_{j=1}^M
\max\{0,1-y_j\langle W,X_j\rangle_F\}.
\end{equation*}
It is the basic loss in support-vector classification \cite{CortesVapnik1995}.

\emph{Properties of the energy.}
For separable data, $E_{\mathrm{hinge}}$ attains its minimum value zero, but it does so on an unbounded cone: once all margins exceed one, scaling the separator preserves zero loss.

\emph{Conclusion for the free energy.}
The free energy is unbounded below.  This is a useful contrast with separable logistic regression: hinge loss has genuine finite minimizers, but the minimizing set is noncompact and supports unbounded entropy.  Existence of minimizers for $E$ is therefore not enough for entropic selection.

\subsubsection{Quadratically regularized hinge loss}

The soft-margin support-vector energy is
\begin{equation*}
E_{\mathrm{SVM},\lambda}(W)
=
E_{\mathrm{hinge}}(W)
+
\frac{\lambda}{2}\normF{W}^2,
\qquad \lambda>0.
\end{equation*}
Quadratic margin regularization is part of the classical support-vector formulation \cite{CortesVapnik1995}.

\emph{Properties of the energy.}
The energy is strongly coercive and, on the ambient Euclidean space, has a unique minimizer whenever the quadratic term acts on every parameter direction.  It is nonsmooth only on the margin hyperplanes.

\emph{Conclusion for the free energy.}
Quadratic growth makes $F_\beta$ norm-coercive for every $N$ and $\beta$.  If the minimization is required to occur in $\GLd$, nonsmooth behavior at the rank boundary still deserves separate treatment, but entropy cannot cause escape to infinity.

\subsection{Robust-statistics energies}\label{subsec:robust-energies}

\subsubsection{Huber regression}

Let $r_j(W)$ be affine residuals and define
\begin{equation*}
E_{\mathrm H}(W)=\sum_{j=1}^M\rho_\delta(r_j(W)),
\qquad
\rho_\delta(u)
=
\begin{cases}
\frac12u^2,&|u|\leq\delta,\\
\delta|u|-\frac12\delta^2,&|u|>\delta.
\end{cases}
\end{equation*}
Huber's loss is a foundational robust-estimation criterion \cite{Huber1964}.

\emph{Properties of the energy.}
The loss is quadratic near zero and linear in the tails.  If the residual operator is injective on the state space, then
\[
E_{\mathrm H}(W)\geq c\normF{W}-C,
\]
so $E_{\mathrm H}$ is coercive and attains a minimum.  If the residual operator has a kernel, the energy is constant on the corresponding affine directions.

\emph{Conclusion for the free energy.}
In the identifiable case, linear growth dominates the logarithmic entropy and $F_\beta$ is norm-coercive.  In the nonidentifiable case, the affine-ray obstruction makes $F_\beta$ unbounded below.  Thus replacing squared loss by Huber loss changes the growth from quadratic to linear but does not change the decisive role of identifiability.

\subsubsection{Cauchy and Student-$t$ negative log-likelihoods}

A representative heavy-tailed energy is
\begin{equation*}
E_{\nu,s}(W)
=
\frac{\nu+1}{2}
\sum_{j=1}^M
\log\left(1+\frac{r_j(W)^2}{\nu s^2}\right),
\end{equation*}
with the Cauchy model obtained at $\nu=1$.  Student-$t$ likelihoods are widely used for robust statistical modeling \cite{LangeLittleTaylor1989}.

\emph{Properties of the energy.}
The loss is generally nonconvex in nonlinear models and has logarithmic tails.  Along a ray $W_0+tH$, each residual that grows linearly contributes a multiple of $\log|t|$, while residuals annihilated by $H$ contribute only a bounded amount.  If every residual is unchanged in some nonzero direction, the energy is exactly flat there.

\emph{Conclusion for the free energy.}
This is the borderline regime.  Both $E_{\nu,s}$ and $S_N$ may grow like $\log|t|$, so boundedness depends on the directional coefficients, not merely on whether $E_{\nu,s}\to\infty$.  The exact test is the entropy-profile criterion in \cref{prop:Theta-criterion}.  A kernel direction is still fatal, while sufficiently large logarithmic coefficients can make $F_\beta$ bounded below at a given temperature.

\subsubsection{Bounded redescending losses}

Tukey's biweight is a standard example:
\begin{equation*}
\rho_c(u)
=
\begin{cases}
\dfrac{c^2}{6}\left[1-\left(1-(u/c)^2\right)^3\right],&|u|\leq c,\\[2mm]
\dfrac{c^2}{6},&|u|>c,
\end{cases}
\qquad
E_{\mathrm T}(W)=\sum_{j=1}^M\rho_c(r_j(W)).
\end{equation*}
Redescending robust procedures originate in work such as \cite{BeatonTukey1974}.

\emph{Properties of the energy.}
For a finite dataset, $E_{\mathrm T}$ is bounded above on the entire parameter space.  It may possess useful finite local or global minimizers, but it supplies no confinement at infinity.

\emph{Conclusion for the free energy.}
Along every nontrivial affine ray through a full-rank matrix, the energy stays bounded while the fiber entropy diverges.  Hence $F_\beta$ is unbounded below for every finite $\beta$.  A bounded redescending loss must be accompanied by an explicit confining penalty or a proper prior if one wants a well-posed free-energy minimization problem.

\subsection{Log-determinant energies on the positive-definite cone}\label{subsec:logdet-energy}

\subsubsection{Gaussian covariance and precision estimation}

For a positive-definite precision matrix $\Theta$, the Gaussian negative log-likelihood and its graphical-lasso modification are
\begin{equation*}
E_{\mathrm{glasso}}(\Theta)
=
\Tr(\widehat\Sigma\Theta)
-
\log\det\Theta
+
\lambda\|\Theta\|_{1,\mathrm{off}},
\qquad \Theta\succ0.
\end{equation*}
This energy is used to estimate sparse conditional-dependence graphs \cite{FriedmanHastieTibshirani2008}.

\emph{Properties of the energy.}
If $\widehat\Sigma\succ0$, then $\Tr(\widehat\Sigma\Theta)$ grows at least linearly as $\|\Theta\|\to\infty$, while $-\log\det\Theta\to+\infty$ as $\Theta$ approaches the singular boundary.  Thus the energy has compact sublevel sets in the positive-definite cone and attains a positive-definite minimum.  If $\widehat\Sigma$ is singular, coercivity can fail in null directions and must be restored by the penalty or an additional hypothesis.

\emph{Conclusion for the free energy.}
When $\widehat\Sigma\succ0$, the linear confinement dominates the logarithmic fiber entropy at infinity, and the log-determinant term supplies precisely the rank-boundary barrier required by \cref{cor:attainment-condition}.  The free energy therefore has compact sublevel sets and an attained full-rank minimum.  This is a model example in which both possible escape mechanisms are excluded.

\subsection{A practical taxonomy}
\label{subsec:practical-taxonomy}

The examples fall into four broad classes.

\subsubsection{Superlogarithmically confining energies}

Full least squares, injective matrix sensing, ridge regularization, identifiable Huber regression, quadratically regularized margin losses, and positive-definite covariance likelihoods grow linearly or quadratically in every unbounded direction.  They satisfy
\[
\frac{E(W)}{\log(1+\normF{W})}\longrightarrow+\infty,
\]
so $F_\beta$ is norm-coercive for every finite temperature.  Actual attainment in $\GLd$ may still require a rank-boundary argument unless the energy already contains a barrier.

\subsubsection{Underdetermined or gauge-invariant energies}

Matrix completion, noninjective matrix sensing, rank-deficient regression, and unfixed softmax parametrizations have nontrivial affine directions on which the loss is constant.  Their ordinary energies may have excellent finite minimizers, but \cref{cor:affine-energy-obstruction} forces the corresponding free energy to be unbounded below.  Restricting the model, quotienting the symmetry, or adding a confining prior is essential.

\subsubsection{Losses whose optima occur at infinity or on unbounded minimizing sets}

Separable logistic and exponential losses approach their infimum along separating rays, while separable hinge loss vanishes on an unbounded cone.  The former may have no finite minimizer and the latter may have many, but in either case the energy remains bounded along an escape direction and the entropy sends $F_\beta$ to $-\infty$.

\subsubsection{Borderline logarithmic and rank-sensitive energies}

Student-$t$ and Cauchy negative log-likelihoods can grow on the same logarithmic scale as $S_N$.  Nuclear-norm penalties, factor-induced Schatten penalties, and other low-rank regularizers are confining at infinity but can compete with the entropy near the rank boundary.  Here boundedness and attainment depend on temperature, depth, coefficients, singular-value geometry, and the observation map.  The entropy profile $\Theta_E$ from \cref{prop:Theta-criterion} is the appropriate global test.

The practical rule is
\begin{equation*}
\begin{array}{rcl}
\text{flat or bounded-energy escape directions}
&\Longrightarrow&
F_\beta\text{ is unbounded below},\\[1mm]
\text{superlogarithmic growth of }E
&\Longrightarrow&
F_\beta\text{ is norm-coercive},\\[1mm]
\text{logarithmic growth of }E
&\Longrightarrow&
\text{compare directional coefficients},\\[1mm]
\text{a minimum in }\GLd
&\Longrightarrow&
\text{also control the rank boundary}
\end{array}
\end{equation*}
This taxonomy separates two failures of deterministic entropic selection.  The energy may fail to confine the system at infinity, or it may be norm-coercive while still allowing minimizing sequences to approach corank one. Entropic selection is well posed only after these global defects have been excluded.

\section{The free energy is unbounded below}\label{sec:unbounded}

The diagonal observation operator is noninjective, so unboundedness already follows abstractly from \cref{thm:linear-sensing-dichotomy}.  We nevertheless give the explicit zero-loss shear because it identifies the precise logarithmic escape rate of the entropy and will be useful in interpreting the dynamics.

The noncompact zero-loss set already prevents free-energy minimization.

\begin{proposition}[Zero-loss shear]\label{prop:shear}
For $a\in\mathbb R$, let
\begin{equation*}
W_a=I_d+a e_1e_2^T.
\end{equation*}
Then $E_d(W_a)=0$, and the singular values of $W_a$ are
\[
s_a,
\underbrace{1,\ldots,1}_{d-2\text{ times}},
s_a^{-1},
\]
where
\begin{equation}\label{eq:sa}
s_a=\frac{\sqrt{a^2+4}+|a|}{2}.
\end{equation}
In particular, $s_a\sim|a|$ as $|a|\to\infty$.
\end{proposition}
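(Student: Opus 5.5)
The plan is a direct computation exploiting the block structure of $W_a$. First, the zero-loss claim is immediate: the perturbation $ae_1e_2^T$ has no diagonal entry because its only nonzero entry sits in position $(1,2)$ and $1\neq2$. Hence $(W_a)_{ii}=1$ for every $i$, and so $E_d(W_a)=0$.

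For the singular values, we observe that $W_a$ is block diagonal with respect to $\mathbb R^d=\operatorname{span}\{e_1,e_2\}\oplus\operatorname{span}\{e_3,\ldots,e_d\}$. The first block is
\[
B_a=\begin{pmatrix}1&a\\0&1\end{pmatrix},
\]
and the second block is $I_{d-2}$. The singular values of $W_a$ are therefore those of $B_a$ together with $d-2$ copies of $1$. We would say this explicitly: an SVD of $B_a$ combined with the identity on the complement gives an SVD of $W_a$.

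It remains to find the singular values of $B_a$. We use $\det B_a=1$, so the product of the two singular values is $1$, and
\[
\Tr(B_a^TB_a)=2+a^2,
\]
so the sum of their squares is $a^2+2$. Writing the singular values as $s$ and $s^{-1}$ with $s\geq1$, this gives $s^2+s^{-2}=a^2+2$, that is, $(s-s^{-1})^2=a^2$. Hence $s-s^{-1}=|a|$, which is the quadratic $s^2-|a|s-1=0$. Its positive root is exactly \eqref{eq:sa}. The asymptotic $s_a\sim|a|$ follows because $\sqrt{a^2+4}=|a|+O(1/|a|)$.

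No step is a real obstacle. The only point needing care is the bookkeeping that the listed multiset $s_a,1,\ldots,1,s_a^{-1}$ is correct when $a=0$, where $s_0=1$ and all singular values equal $1$, and that no ordering issue arises, since $s_a\geq1\geq s_a^{-1}$.
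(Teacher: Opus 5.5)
Your proof is correct and is essentially the paper's argument: reduce to the upper-left block $\begin{pmatrix}1&a\\0&1\end{pmatrix}$, note that its singular values have product $|\det|=1$ and squared sum $a^2+2$, and observe that the remaining $d-2$ singular values equal $1$. Your explicit reduction to $s^2-|a|s-1=0$ and the check that $E_d(W_a)=0$ supply details the paper leaves to the reader.
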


\begin{proof}
Only the upper-left $2\times2$ block differs from the identity.  That block is
\[
\begin{pmatrix}1&a\\0&1\end{pmatrix}.
\]
The product of its singular values is the absolute determinant, equal to $1$, while the sum of their squares is its squared Frobenius norm, equal to $a^2+2$.  Solving these two equations gives $s_a$ and $s_a^{-1}$ with \eqref{eq:sa}.  All remaining singular values are $1$.
\end{proof}

\begin{theorem}[Failure of free-energy minimization]\label{thm:unbounded}
For every $d\geq2$, every integer $N\geq2$, and every finite $\beta>0$,
\[
\inf_{W\in\GLd}F_\beta(W)=-\infty.
\]
More precisely,
\begin{equation}\label{eq:shear-entropy-asymptotic}
S_N(W_a)=(d-1)\cN\log s_a+O_{N,d}(1)
\qquad
\text{as }|a|\to\infty.
\end{equation}
\end{theorem}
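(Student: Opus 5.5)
The plan is to evaluate $F_\beta$ along the zero-loss shear family $W_a$ of \cref{prop:shear} and show that the entropy term alone diverges. Since $E_d(W_a)=0$ for every $a$, we have $F_\beta(W_a)=-\beta^{-1}S_N(W_a)$. It therefore suffices to prove the asymptotic \eqref{eq:shear-entropy-asymptotic}, because $(d-1)\cN>0$ and $s_a\to\infty$ as $|a|\to\infty$. Note also that $\det W_a=1$, so $W_a\in\GLd$ for all $a$.

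To get the asymptotic, I will apply the exact representation \eqref{eq:entropy-weighted-singular-exact} of \cref{lem:entropy-upper}. For $|a|>0$ we have $s_a>1>s_a^{-1}$, so the ordered singular values are
\[
\sigma_1=s_a,\qquad \sigma_2=\cdots=\sigma_{d-1}=1,\qquad \sigma_d=s_a^{-1}.
\]
In the weighted sum $\cN\sum_{i=1}^{d-1}(d-i)\log\sigma_i$, only $i=1$ contributes, since $\log\sigma_i=0$ for $2\le i\le d-1$ and the index $i=d$ is absent. That single term equals $(d-1)\cN\log s_a$.

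The remainder $R_N(W_a)$ lies in $[0,\frac12\binom d2\log N]$ by \eqref{eq:R-bound}, which is an $O_{N,d}(1)$ error. This gives \eqref{eq:shear-entropy-asymptotic}. The case $d=2$ is consistent with this: the sum has the single term $i=1$ with weight $1$.

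There is no real obstacle. The only point needing care is that the bound on $R_N$ must hold uniformly in $a$, which \eqref{eq:R-bound} already provides, together with the ordering of singular values for $a\neq0$. Combining the two steps, $F_\beta(W_a)=-\beta^{-1}(d-1)\cN\log s_a+O(1)\to-\infty$. Alternatively, the conclusion follows abstractly from \cref{cor:affine-energy-obstruction} with $H=e_1e_2^T$.
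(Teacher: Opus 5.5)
Your proposal is correct and is essentially the paper's argument: both evaluate $F_\beta$ along the zero-loss shear $W_a$ and read off the entropy growth from the ratio formula \eqref{eq:q-ratio}. The paper counts the pair terms directly, with the $d-1$ pairs involving $s_a$ each contributing $\cN\log s_a+O(1)$. You instead use the already-summed representation \eqref{eq:entropy-weighted-singular-exact}, which packages the same computation and also gives the explicit uniform bound $0\le S_N(W_a)-(d-1)\cN\log s_a\le\frac12\binom d2\log N$.
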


\begin{proof}
There are $d-2$ pairs $(s_a,1)$ and one pair $(s_a,s_a^{-1})$.  By \eqref{eq:q-ratio}, each contributes $\cN\log s_a+O(1)$.  The $d-2$ pairs $(1,s_a^{-1})$ and all pairs of unit singular values remain bounded.  This proves \eqref{eq:shear-entropy-asymptotic}.  Since $E_d(W_a)=0$,
\[
F_\beta(W_a)=-\beta^{-1}S_N(W_a)\longrightarrow-\infty.
\]
\end{proof}

\begin{remark}
The argument is not tied to the target diagonal $(1,\ldots,1)$.  If every prescribed diagonal entry is nonzero, adding a large off-diagonal shear preserves the observed entries and invertibility while producing the same logarithmic entropy growth.
\end{remark}

\section{Classification of all full-rank critical points}\label{sec:critical}

Because $\Aop_{N,W}$ is invertible on $\GLd$, equilibria of \eqref{eq:free-flow} are exactly Euclidean critical points of $F_\beta$.

\subsection{The critical-point equation}

At a critical point,
\begin{equation}\label{eq:critical-equation}
E_d'(W)=\frac1\beta S_N'(W).
\end{equation}
The left-hand side is diagonal.  The right-hand side is spectral.  Positivity and strict concavity force these structures to align.

\begin{lemma}[Shared polar factor]\label{lem:shared-polar}
Let $W=U\Sigma V^T\in\GLd$, and let $g_i=\partial_i\Sn(\sigma)$.  Then
\[
S_N'(W)=U\diag(g_1,\ldots,g_d)V^T.
\]
Moreover, $S_N'(W)$ is invertible and has the same orthogonal polar factor $UV^T$ as $W$.
\end{lemma}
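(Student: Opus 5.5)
The first identity is the spectral-gradient formula \eqref{eq:spectral-gradient}, applied with $f=\Sn$. This is legitimate because $\Sn$ is smooth and symmetric on $(0,\infty)^d$: it is real-analytic across repeated singular values by \cite[Theorem 1.8]{ChenKotwalMenon2025} and \cref{lem:q-basic}(i). Applying the formula at the SVD $W=U\Sigma V^T$ gives
\[
S_N'(W)=U\diag(g_1,\ldots,g_d)V^T
\]
directly. At repeated singular values the right-hand side does not depend on the choice of singular basis, since $g_i=g_j$ whenever $\sigma_i=\sigma_j$ by symmetry. So the formula is well defined for every $W\in\GLd$.

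For invertibility I will use \cref{cor:g-positive}, which needs $d\geq2$. It gives $g_i(\sigma)>0$ for every $i$. Hence $G=\diag(g_1,\ldots,g_d)$ is positive definite, and $S_N'(W)=UGV^T$ has determinant $\pm\prod g_i\neq0$.

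For the polar factor, I will write $S_N'(W)=(UV^T)(VGV^T)$. Here $UV^T$ is orthogonal and $VGV^T$ is symmetric positive definite. The polar decomposition of an invertible matrix is unique, so the orthogonal polar factor of $S_N'(W)$ is exactly $UV^T$. The same factorization $W=(UV^T)(V\Sigma V^T)$ shows that $UV^T$ is also the polar factor of $W$. Uniqueness of the polar factor also confirms that $UV^T$ does not depend on which SVD was chosen.

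The main point needing care is basis independence at repeated singular values, and the symmetry remark above settles it. Positivity of each $g_i$, rather than merely $g_i\neq0$, is essential here. If some $g_i$ were negative, the orthogonal factor would pick up a sign flip and would no longer equal $UV^T$.
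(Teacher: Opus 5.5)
Your proposal is correct and follows essentially the same route as the paper: the spectral-gradient formula \eqref{eq:spectral-gradient}, positivity of every $g_i$ from \cref{cor:g-positive}, and the observation that $U\diag(g_1,\ldots,g_d)V^T$ is then an SVD sharing the singular frames of $W$, so its polar factor is $UV^T$. Your explicit factorization $(UV^T)(VGV^T)$ together with uniqueness of the polar decomposition spells out what the paper leaves implicit.
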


\begin{proof}
The gradient formula is \eqref{eq:spectral-gradient}.  By \cref{cor:g-positive}, every $g_i$ is positive.  Thus the displayed decomposition is an SVD of $S_N'(W)$ with the same left and right singular frames as $W$, and its orthogonal polar factor is $UV^T$.
\end{proof}

\begin{theorem}[Critical matrices are diagonal]\label{thm:critical-diagonal}
Assume $N>2$.  Every critical point of $F_\beta$ in $\GLd$ is diagonal.
\end{theorem}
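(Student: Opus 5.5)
The plan is to work directly from the critical-point equation \eqref{eq:critical-equation}. Write $D=E_d'(W)=\diag(W_{11}-1,\ldots,W_{dd}-1)$ and $G=S_N'(W)=U\diag(g)V^T$, so that $\beta D=G$. By \cref{lem:shared-polar}, $G$ is invertible with the same orthogonal polar factor $Q=UV^T$ as $W$. Hence $D$ is an invertible diagonal matrix, so every $W_{ii}\neq1$. The polar decomposition of a diagonal invertible matrix is explicit: $D=\diag(\sgn d_i)\cdot\diag(|d_i|)$. By uniqueness of the polar factor of an invertible matrix, $Q=UV^T=\diag(\epsilon_1,\ldots,\epsilon_d)=:J$ with $\epsilon_i=\sgn(W_{ii}-1)$. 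Thus $W=QP$ with $P=V\Sigma V^T$ symmetric positive definite, and $\beta D=J\,V\diag(g)V^T$. Since $J$ is diagonal, $V\diag(g)V^T=\beta J D=\beta\diag(|W_{ii}-1|)$ is diagonal.

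The key step is to pass from ``$V\diag(g)V^T$ is diagonal'' to ``$V\Sigma V^T$ is diagonal''. Here \cref{lem:gradient-order} enters. Since $g_i=g_j$ if and only if $\sigma_i=\sigma_j$, there is a function $\phi$ on the finite set $\{g_1,\ldots,g_d\}$ with $\phi(g_i)=\sigma_i$; on that set $\phi$ is a polynomial. Hence
\[
P=V\Sigma V^T=V\phi(\diag(g))V^T=\phi\bigl(V\diag(g)V^T\bigr).
\]
Because the matrix inside $\phi$ is diagonal, $P$ is diagonal. Equivalently, $P$ and $V\diag(g)V^T$ share eigenspaces, since the map $\sigma\mapsto g$ is injective on distinct values and preserves multiplicities. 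It follows that $W=JP$ is diagonal.

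The expected subtlety is repeated singular values. There $V$ is not unique, and one must check that the argument does not depend on the chosen SVD. The polynomial-functional-calculus formulation avoids this, because the eigenspaces of $V\diag(g)V^T$ are exactly the spans of singular directions grouped by equal $\sigma$, which is precisely what \cref{lem:gradient-order} guarantees. The hypothesis $N>2$ is used only through \cref{lem:gradient-order}, that is, through strict concavity (\cref{thm:strict-concavity}). I would note that for $(N,d)=(2,2)$ the injectivity of $\sigma\mapsto g$ can fail, which is why the assumption appears.
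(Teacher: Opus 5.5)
Your proof is correct and follows the paper's argument. Uniqueness of the polar factor forces $UV^T$ to be the diagonal sign matrix and $V\diag(g)V^T$ to be diagonal. Then \cref{lem:gradient-order} transfers diagonality to $V\Sigma V^T$. The only difference is cosmetic: you write $V\Sigma V^T$ as an interpolating polynomial in $V\diag(g)V^T$, while the paper argues that the two commuting positive matrices have the same eigenspace blocks.
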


\begin{proof}
Let
\[
P=E_d'(W)=\diag(W_{11}-1,\ldots,W_{dd}-1).
\]
At a critical point, \eqref{eq:critical-equation} and \cref{lem:shared-polar} show that $P$ is invertible.  Write the right polar decomposition of $W$ as
\[
W=QH,
\qquad
Q=UV^T,
\qquad
H=V\Sigma V^T>0.
\]
The entropy gradient has the corresponding polar decomposition
\[
S_N'(W)=QG,
\qquad
G=V\diag(g_1,\ldots,g_d)V^T>0.
\]
Equation \eqref{eq:critical-equation} gives
\[
P=\frac1\beta QG.
\]
The polar decomposition of an invertible matrix is unique.  Since $P$ is diagonal, its orthogonal polar factor is the diagonal sign matrix
\[
D_\epsilon=\diag(\sgn P_{11},\ldots,\sgn P_{dd}).
\]
Therefore $Q=D_\epsilon$, and
\[
G=\beta D_\epsilon P
\]
is diagonal and positive.

The positive matrices $G$ and $H$ commute because they are simultaneously diagonalized by $V$.  Partition the standard coordinate space into eigenspaces of the diagonal matrix $G$.  The commutation relation makes $H$ block diagonal with respect to this partition.  Within one such block, all eigenvalues $g_i$ of $G$ agree.  By \cref{lem:gradient-order}, the corresponding singular values $\sigma_i$ agree.  Hence every eigenvalue of the restriction of $H$ to the block is the same positive number, so that restriction is a scalar multiple of the identity.  It follows that $H$ is diagonal in the standard basis.  Since $Q=D_\epsilon$ is also diagonal, $W=QH$ is diagonal.
\end{proof}

\subsection{One critical point in each sign chamber}

Write a diagonal invertible matrix as
\begin{equation*}
W=\diag(\epsilon_1x_1,\ldots,\epsilon_dx_d),
\qquad
x_i>0,
\quad
\epsilon_i\in\{-1,1\}.
\end{equation*}
On this chamber,
\[
S_N'(W)=\diag(\epsilon_1g_1(x),\ldots,\epsilon_dg_d(x)),
\]
so \eqref{eq:critical-equation} is equivalent to
\begin{equation}\label{eq:critical-x}
x_i-\epsilon_i=\frac1\beta g_i(x),
\qquad i=1,\ldots,d.
\end{equation}
These are the Euler equations for
\begin{equation}\label{eq:Phi-eps}
\Phi_\epsilon(x)
=
\frac12\sum_{i=1}^d(x_i-\epsilon_i)^2
-
\frac1\beta\Sn(x),
\qquad x\in(0,\infty)^d.
\end{equation}

\begin{lemma}[Boundary behavior of $\Phi_\epsilon$]\label{lem:Phi-boundary}
Assume $N>2$.
\begin{enumerate}[label=\textup{(\roman*)}]
\item $\Phi_\epsilon(x)\to+\infty$ as $\lVert x\rVert\to\infty$.
\item If at least two coordinates tend to zero while $x$ remains bounded, then $\Phi_\epsilon(x)\to+\infty$.
\item At a boundary point with exactly one zero coordinate and all others positive, the inward one-sided derivative of $\Phi_\epsilon$ in that coordinate is $-\infty$.
\end{enumerate}
\end{lemma}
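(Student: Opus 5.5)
The three claims can be proved separately, using the pairwise decomposition $\Sn(x)=\sum_{i<j}q_N(x_i,x_j)$ together with \cref{lem:entropy-upper,prop:entropy-boundary-general,lem:boundary-derivative}.

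\emph{Part (i).} By \eqref{eq:entropy-upper}, applied to $W=\diag(x)$, we have $\Sn(x)\le C_{N,d}+\binom d2\cN\log(1+\lVert x\rVert)$. The quadratic term satisfies $\frac12\sum_i(x_i-\epsilon_i)^2\ge\frac14\lVert x\rVert^2-\frac d2$. The quadratic growth dominates the logarithm, so $\Phi_\epsilon(x)\to+\infty$ as $\lVert x\rVert\to\infty$.

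\emph{Part (ii).} Let $x$ stay bounded, say $\lVert x\rVert\le M$, and suppose $x_k,x_l\to0$ with $k\ne l$. We bound $\Sn$ term by term.
\begin{itemize}
\item Every pair term satisfies $q_N(a,b)=\cN\log\max(a,b)+h_N(r)\le\cN\log M+\frac12\log N$, by \eqref{eq:q-ratio} and the bound on $h_N$.
\item For the special pair, $q_N(x_k,x_l)\le\cN\log\max(x_k,x_l)+\frac12\log N\to-\infty$.
\end{itemize}
Hence $\Sn(x)\to-\infty$. This is \cref{prop:entropy-boundary-general}(i) specialized to diagonal matrices. Since the quadratic part of $\Phi_\epsilon$ is nonnegative, $\Phi_\epsilon\to+\infty$.

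\emph{Part (iii).} Take a boundary point $x^0$ with $x^0_k=0$ and $x^0_i>0$ for $i\ne k$, and consider $\Phi_\epsilon$ along $x^0+se_k$ for $s\downarrow0$. The plan has four steps.
\begin{enumerate}
\item The function is continuous up to $s=0$. The energy part is a polynomial. For the entropy, $q_N(x_i,s)\to\cN\log x_i$ because $h_N(0)=0$, and pairs not involving $k$ are constant.
\item The energy part has the bounded derivative $s-\epsilon_k$.
\item The derivative of the entropy part is $\sum_{i\ne k}\partial_bq_N(x^0_i,s)$. By \cref{lem:boundary-derivative}, uniformly on the compact set $\{x^0_i\}$, this equals
\[
\frac1N\sum_{i\ne k}(x^0_i)^{-2/N}s^{2/N-1}\bigl(1+o(1)\bigr),
\]
which tends to $+\infty$ because $2/N-1<0$ when $N>2$.
\item Therefore $\frac{d}{ds}\Phi_\epsilon(x^0+se_k)\to-\infty$. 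Write $\phi(s)=\Phi_\epsilon(x^0+se_k)$. By the mean value theorem and continuity at $0$, the difference quotient $(\phi(s)-\phi(0))/s$ equals $\phi'(\xi)$ for some $\xi\in(0,s)$, and so it also tends to $-\infty$. This is the inward one-sided derivative.
\end{enumerate}

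The main obstacle is being careful in step 1: continuity at $s=0$ must be justified before the mean-value argument in step 4 applies. Continuity holds by the limit $h_N(0)=0$ noted in \cref{lem:q-basic}, as used in the proof of \cref{prop:entropy-boundary-general}(ii).
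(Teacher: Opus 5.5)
Your proposal is correct and follows the paper's proof essentially step for step. Part (i) uses the logarithmic bound \eqref{eq:entropy-upper} against quadratic growth, part (ii) uses the ratio form \eqref{eq:q-ratio} to send the pair term of the two vanishing coordinates to $-\infty$, and part (iii) uses \cref{lem:boundary-derivative} to force the entropy derivative in the vanishing coordinate to $+\infty$. The only difference is that you spell out the continuity-at-$s=0$ and mean-value step that turns $\partial_k\Phi_\epsilon\to-\infty$ into a genuine one-sided derivative at the boundary point; the paper leaves that step implicit.
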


\begin{proof}
The logarithmic upper bound in \cref{lem:entropy-upper}, applied to a diagonal matrix, shows that the quadratic term in \eqref{eq:Phi-eps} dominates at infinity.

If $x_i,x_j\to0$, use \eqref{eq:q-ratio} with $a=\max\{x_i,x_j\}$ to obtain
\[
q_N(x_i,x_j)=\cN\log a+O(1)\longrightarrow-\infty.
\]
All other pair terms are bounded above on a bounded set.  Thus $\Sn(x)\to-\infty$ and $\Phi_\epsilon(x)\to+\infty$.

Finally, suppose $x_i\downarrow0$ while $x_j$ remains in a compact subset of $(0,\infty)$ for $j\neq i$.  By \cref{lem:boundary-derivative},
\[
g_i(x)\longrightarrow+\infty.
\]
Hence
\[
\partial_i\Phi_\epsilon(x)
=x_i-\epsilon_i-\frac1\beta g_i(x)
\longrightarrow-\infty.
\]
\end{proof}

\begin{theorem}[Exactly $2^d$ critical points]\label{thm:2d-critical}
Assume $N>2$.  For every $\epsilon\in\{-1,1\}^d$, the function $\Phi_\epsilon$ has a unique critical point $x^\epsilon\in(0,\infty)^d$.  Consequently,
\[
\#\Crit_{\GLd}(F_\beta)=2^d,
\]
and the critical points are exactly
\[
W_\epsilon=\diag(\epsilon_1x_1^\epsilon,\ldots,\epsilon_dx_d^\epsilon).
\]
\end{theorem}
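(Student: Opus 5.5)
The plan is to combine strict convexity of $\Phi_\epsilon$ with the boundary behavior of \cref{lem:Phi-boundary}, and then transfer the count back to $\GLd$ through \cref{thm:critical-diagonal}.

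\emph{Uniqueness.} By \cref{thm:strict-concavity}, $\nabla^2\Sn(x)\prec0$ on $(0,\infty)^d$. Hence
\[
\nabla^2\Phi_\epsilon(x)=I_d-\beta^{-1}\nabla^2\Sn(x)\succeq I_d .
\]
So $\Phi_\epsilon$ is strongly convex on the convex open set $(0,\infty)^d$. A strongly convex function on a convex domain has at most one critical point, and any critical point is its global minimizer.

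\emph{Existence.} I would minimize $\Phi_\epsilon$ over the open orthant. Take a minimizing sequence. By \cref{lem:Phi-boundary}(i) it is bounded, and by part (ii) no two coordinates can tend to zero simultaneously. If a subsequence converges to a boundary point $\bar x$ with exactly one zero coordinate $x_i=0$, then part (iii) gives
\[
\partial_i\Phi_\epsilon\to-\infty
\]
uniformly on a neighborhood of $\bar x$ intersected with the orthant. Moving inward in the $i$th coordinate then strictly decreases $\Phi_\epsilon$. More carefully, $\Phi_\epsilon$ extends continuously to $\bar x$, because the entropy has a finite limit by \cref{prop:entropy-boundary-general}(ii) applied to diagonal matrices. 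Integrating the derivative along the $i$th coordinate shows that points near $\bar x$ with $x_i$ small have strictly larger values than $\bar x+\delta e_i$ for a fixed small $\delta>0$. Therefore the infimum over the closure is not attained at $\bar x$.

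A cleaner route is to extend $\Phi_\epsilon$ continuously to the closed set
\[
K=\{x\in[0,\infty)^d:\ \text{at most one }x_i=0,\ \Phi_\epsilon(x)\le\Phi_\epsilon(\mathbf 1)\}.
\]
By (i) and (ii), $K$ is compact: it is bounded, and a closed bounded set in the orthant that stays away from the corank-two faces is compact. So $\Phi_\epsilon$ attains its minimum on $K$. Part (iii) then excludes a minimum on a corank-one face, because a one-sided derivative of $-\infty$ in an inward direction contradicts minimality; this needs a short mean-value argument using the uniformity in \cref{lem:boundary-derivative}. The minimizer therefore lies in $(0,\infty)^d$ and is a critical point $x^\epsilon$.

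\emph{Counting.} By \cref{thm:critical-diagonal}, every critical point of $F_\beta$ in $\GLd$ is diagonal. Writing it as $\diag(\epsilon_ix_i)$ with $x_i>0$, the unique sign vector $\epsilon$ and magnitudes $x$ are determined. By \eqref{eq:critical-x}, criticality of $F_\beta$ is equivalent to $\nabla\Phi_\epsilon(x)=0$. Conversely, each $x^\epsilon$ gives a critical $W_\epsilon$, since \eqref{eq:critical-x} is equivalent to \eqref{eq:critical-equation} for diagonal $W$ via \eqref{eq:spectral-gradient}. Distinct $\epsilon$ give distinct matrices, so the count is exactly $2^d$.

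The main obstacle is the boundary exclusion at a corank-one face. There I would verify that the continuous extension is legitimate and that $g_i\to+\infty$ holds locally uniformly. Both follow from \cref{lem:boundary-derivative} with $a$ in a compact set, because for $N>2$ the leading term of $\partial_bq_N$ is a positive multiple of $b^{2/N-1}$, which blows up as $b\downarrow0$.
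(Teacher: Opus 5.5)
Your proposal is correct and takes essentially the same route as the paper. Uniqueness comes from strong convexity via \cref{thm:strict-concavity}. For existence, your compact sublevel set $K$ plays the role of the paper's coercive lower-semicontinuous extension to the closed orthant; corank-two faces are excluded by \cref{lem:Phi-boundary}(ii), and corank-one faces by the inward derivative $-\infty$ in part (iii), where your mean-value argument just spells out the paper's ``moving slightly inward lowers $\Phi_\epsilon$.'' The count then follows from \cref{thm:critical-diagonal}.
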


\begin{proof}
By \cref{thm:strict-concavity},
\begin{equation*}
\nabla^2\Phi_\epsilon(x)
=I-\frac1\beta\nabla^2\Sn(x)
\succ0.
\end{equation*}
Thus $\Phi_\epsilon$ is strongly convex and has at most one critical point.

To prove existence, extend $\Phi_\epsilon$ to the closed positive orthant by taking the finite limit at boundary points with exactly one zero coordinate and setting the value equal to $+\infty$ when at least two coordinates vanish.  By \cref{lem:Phi-boundary}, this lower-semicontinuous extension is coercive, so it attains a minimum.  A boundary point with at least two zeros has infinite value.  A boundary point with exactly one zero cannot minimize because moving slightly inward lowers $\Phi_\epsilon$.  Hence the minimizer lies in $(0,\infty)^d$ and is the unique critical point.

The diagonality theorem now shows that the resulting $2^d$ points exhaust the full critical set.
\end{proof}

\begin{remark}[Permutation types]\label{rem:permutation-types}
If a sign vector has $k$ positive entries, uniqueness and permutation symmetry imply that all positive coordinates of $x^\epsilon$ have one common magnitude and all negative coordinates have another.  Hence there are only $d+1$ critical-point types up to coordinate permutation, although there are $2^d$ distinct points.
\end{remark}

\begin{proposition}[The two scalar equilibria]\label{prop:scalar-equilibria}
Let
\[
C_{d,N,\beta}=\frac{(d-1)\cN}{2\beta}.
\]
The all-positive and all-negative critical points are
\[
W_+=w_+I_d,
\qquad
W_-=-w_-I_d,
\]
where
\begin{equation*}
w_+(w_+-1)=C_{d,N,\beta},
\qquad
w_-(w_-+1)=C_{d,N,\beta}.
\end{equation*}
Thus
\[
w_+=\frac{1+\sqrt{1+4C_{d,N,\beta}}}{2},
\qquad
w_-=\frac{-1+\sqrt{1+4C_{d,N,\beta}}}{2}.
\]
\end{proposition}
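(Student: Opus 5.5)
By \cref{rem:permutation-types}, the all-positive and all-negative critical points have all coordinates of $x^\epsilon$ equal. So I would look for $x=(w,\ldots,w)$ with $w>0$ and reduce the Euler equations \eqref{eq:critical-x} to a single scalar equation. The uniqueness part of \cref{thm:2d-critical} then guarantees that the positive root I find is the critical point, with no separate check needed.

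The key step is computing $g_i$ at a point with all coordinates equal to $w$. By symmetry of $\Sn$, all $g_i$ agree there. I would evaluate the common value via homogeneity rather than by differentiating $q_N$ at the diagonal, which would require l'Hospital. Applying \eqref{eq:q-homogeneity} to each pair term gives
\[
\Sn(t\sigma)=\Sn(\sigma)+\binom d2\cN\log t.
\]
Differentiating at $t=1$ yields the Euler identity $\sum_i\sigma_ig_i(\sigma)=\binom d2\cN$. At $\sigma=(w,\ldots,w)$ this becomes $dwg=\tfrac{d(d-1)}2\cN$, that is,
\[
g=\frac{(d-1)\cN}{2w}.
\]

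Substituting into \eqref{eq:critical-x} with $\epsilon_i=1$ gives $w-1=\frac{(d-1)\cN}{2\beta w}$, which is $w_+(w_+-1)=C_{d,N,\beta}$. With $\epsilon_i=-1$ it gives $w+1=\frac{(d-1)\cN}{2\beta w}$, which is $w_-(w_-+1)=C_{d,N,\beta}$. Each quadratic has exactly one positive root, and these are the displayed formulas: $w_+=\tfrac{1+\sqrt{1+4C}}2>1$ and $w_-=\tfrac{-1+\sqrt{1+4C}}2>0$, the latter positive because $C>0$. The corresponding matrices are $W_\pm=\pm w_\pm I_d$.

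None of the steps is a real obstacle. The only point needing care is justifying that $g_i$ is computed correctly at the repeated singular value. The homogeneity argument handles this cleanly because it uses only the analyticity of $\Sn$ across the diagonal, from \cref{lem:q-basic}, together with its symmetry.
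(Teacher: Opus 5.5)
Your proposal is correct and follows essentially the same route as the paper: symmetry together with the homogeneity identity $\Sn(t\sigma)=\Sn(\sigma)+\binom d2\cN\log t$ (differentiated at $t=1$) gives $g_i(w\bm 1)=\frac{(d-1)\cN}{2w}$, which you then substitute into \eqref{eq:critical-x} for the all-positive and all-negative sign vectors. Your explicit choice of the positive roots and your appeal to uniqueness from \cref{thm:2d-critical} fill in details that the paper's two-line proof leaves implicit.
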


\begin{proof}
At $x=w\bm 1$, symmetry and the homogeneity
\[
\Sn(t\sigma)=\Sn(\sigma)+\binom d2\cN\log t
\]
give
\[
g_i(w\bm1)=\frac{(d-1)\cN}{2w}.
\]
Substitute this into \eqref{eq:critical-x} for the all-positive and all-negative sign vectors.
\end{proof}

\begin{proposition}[Zero-temperature location of the saddles]\label{prop:zero-temp-location}
For a fixed sign vector $\epsilon$, as $\beta\to\infty$,
\[
x_i^\epsilon(\beta)\longrightarrow
\begin{cases}
1,&\epsilon_i=1,\\
0,&\epsilon_i=-1.
\end{cases}
\]
Equivalently,
\[
W_\epsilon(\beta)\longrightarrow
\diag\left(\frac{1+\epsilon_1}{2},\ldots,\frac{1+\epsilon_d}{2}\right).
\]
\end{proposition}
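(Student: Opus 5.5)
The plan is to avoid the critical-point equations \eqref{eq:critical-x} entirely and use only the variational characterization from the proof of \cref{thm:2d-critical}. There, $x^\epsilon=x^\epsilon(\beta)$ is the global minimizer of $\Phi_\epsilon$ on $(0,\infty)^d$. The idea is to compare $\Phi_\epsilon(x^\epsilon)$ with $\Phi_\epsilon(y)$ for well-chosen fixed test points $y$. The entropy enters only through the factor $\beta^{-1}$, so it should become negligible once we know it is bounded above along $x^\epsilon(\beta)$.

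\emph{Step 1: uniform boundedness.} Fix any $y\in(0,\infty)^d$, say $y=\bm 1$, and restrict to $\beta\geq1$. Minimality gives
\[
\tfrac12\lVert x^\epsilon-\epsilon\rVert^2\leq \Phi_\epsilon(y)+\tfrac1\beta\Sn(x^\epsilon).
\]
By \eqref{eq:entropy-upper} applied to the diagonal matrix with entries $x^\epsilon$, we have $\Sn(x^\epsilon)\leq C_{N,d}+\binom d2\cN\log(1+\lVert x^\epsilon\rVert)$. The right-hand side is bounded by $C+\Phi_\epsilon(\bm 1)+C'\log(1+\lVert x^\epsilon\rVert)$, uniformly in $\beta\geq1$, since $\Phi_\epsilon(\bm1)$ is bounded for such $\beta$. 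Quadratic beats logarithmic, so $\lVert x^\epsilon(\beta)\rVert\leq R$ for all $\beta\geq1$. Consequently $\Sn(x^\epsilon(\beta))\leq C_R$, again by \eqref{eq:entropy-upper}.

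\emph{Step 2: comparison with a near-limit test point.} Let $\nu$ be the number of indices with $\epsilon_i=-1$. For $\delta\in(0,1)$ define $y^\delta$ by $y^\delta_i=1$ if $\epsilon_i=1$ and $y^\delta_i=\delta$ if $\epsilon_i=-1$. This is a fixed point of the open orthant, so $\Sn(y^\delta)$ is a finite number independent of $\beta$. Minimality together with Step 1 gives
\[
\tfrac12\sum_{\epsilon_i=1}(x_i^\epsilon-1)^2+\tfrac12\sum_{\epsilon_i=-1}(x_i^\epsilon+1)^2
\leq \tfrac12\nu(1+\delta)^2+\tfrac1\beta\bigl(C_R-\Sn(y^\delta)\bigr).
\]
Write $(x_i+1)^2=1+2x_i+x_i^2$ and use $x_i>0$. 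Letting $\beta\to\infty$ then yields
\[
\limsup_{\beta\to\infty}\Bigl(\sum_{\epsilon_i=1}(x_i^\epsilon-1)^2+\sum_{\epsilon_i=-1}\bigl(2x_i^\epsilon+(x_i^\epsilon)^2\bigr)\Bigr)\leq \nu\bigl((1+\delta)^2-1\bigr).
\]
Since $\delta$ is arbitrary, every summand on the left tends to $0$. This gives $x_i^\epsilon\to1$ when $\epsilon_i=1$ and $x_i^\epsilon\to0$ when $\epsilon_i=-1$, which is the claim. The matrix form then follows from $W_\epsilon=\diag(\epsilon_ix_i^\epsilon)$ together with the identities $\epsilon_i\cdot1=(1+\epsilon_i)/2$ for $\epsilon_i=1$ and $\epsilon_i\cdot 0=(1+\epsilon_i)/2$ for $\epsilon_i=-1$.

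The only delicate point is the negative coordinates. The equation $x_i+1=\beta^{-1}g_i(x)$ does not by itself force $x_i\to0$, because $g_i$ blows up precisely as $x_i\downarrow0$ by \cref{lem:boundary-derivative}. This is why I would avoid the Euler equations and use the energy comparison above. That comparison needs only an upper bound on $\Sn(x^\epsilon)$, supplied by Step 1, and finiteness of $\Sn$ at the fixed interior test point $y^\delta$.
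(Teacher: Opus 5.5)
Your proof is correct and follows essentially the same route as the paper. Both arguments compare $\Phi_\epsilon(x^\epsilon(\beta))$ with a competitor having coordinates $1$ at positive signs and small values at negative signs, use the logarithmic upper bound on $\Sn$ to make the family uniformly bounded, and conclude that the quadratic part tends to its minimum $\nu/2$, which forces convergence to the boundary point. The only difference is the choice of competitor: the paper takes negative coordinates equal to $\beta^{-1}$ and needs $|\Sn|=O(\log\beta)$ there, whereas you fix $\delta$ and take an iterated limit, which needs only finiteness of $\Sn(y^\delta)$.
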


\begin{proof}
Let $m$ be the number of negative entries of $\epsilon$, and let $x^0$ denote the boundary vector with coordinates $1$ at positive signs and $0$ at negative signs.  The quadratic part of \eqref{eq:Phi-eps} has the unique minimum $m/2$ on the closed positive orthant, attained at $x^0$.

Choose a competitor $x^{(\beta)}$ with positive coordinates equal to $1$ and negative coordinates equal to $\beta^{-1}$.  Its quadratic energy tends to $m/2$, while \cref{lem:q-basic} gives $|\Sn(x^{(\beta)})|=O(\log\beta)$.  Hence
\[
\limsup_{\beta\to\infty}\Phi_\epsilon(x^\epsilon(\beta))\leq\frac m2.
\]
The logarithmic upper bound for $\Sn$ makes the family $x^\epsilon(\beta)$ bounded.  On a fixed bounded set, $\Sn$ is bounded above, so
\[
\Phi_\epsilon(x)\geq
\frac12\sum_i(x_i-\epsilon_i)^2-O(\beta^{-1}).
\]
It follows that the quadratic term at $x^\epsilon(\beta)$ tends to its minimum $m/2$.  Strict convexity of that quadratic function on the closed orthant forces $x^\epsilon(\beta)\to x^0$.
\end{proof}

\section{The full Hessian and the Morse index}\label{sec:hessian}

The restriction of $F_\beta$ to a diagonal sign chamber is strongly convex, but each critical point becomes a saddle when off-diagonal perturbations are allowed.  We first record the relevant second-order formula.

\subsection{A spectral Hessian formula}

Let $f:(0,\infty)^d\to\mathbb R$ be a symmetric $C^2$ function and define $\Phi(W)=f(\sigma(W))$.  Fix
\[
W_*=D_\epsilon X,
\qquad
D_\epsilon=\diag(\epsilon_1,\ldots,\epsilon_d),
\qquad
X=\diag(x_1,\ldots,x_d)>0.
\]
Put $g_i=\partial_if(x)$.

\begin{lemma}[Second variation at a signed diagonal matrix]\label{lem:spectral-Hessian}
The matrix space splits into the Frobenius-orthogonal sum
\begin{equation}\label{eq:matrix-splitting}
\Md=\mathcal D\oplus\bigoplus_{i<j}\mathcal V_{ij},
\qquad
\mathcal D=\{\text{diagonal matrices}\},
\quad
\mathcal V_{ij}=\operatorname{span}\{E_{ij},E_{ji}\},
\end{equation}
and the Hessian of $\Phi$ respects this splitting.

For $H=\diag(h_1,\ldots,h_d)$,
\begin{equation}\label{eq:hessian-diagonal}
D^2\Phi(W_*)[H,H]
=(D_\epsilon h)^T\nabla^2f(x)(D_\epsilon h).
\end{equation}
For $H=aE_{ij}+bE_{ji}$,
\begin{equation}\label{eq:hessian-offdiag}
D^2\Phi(W_*)[H,H]
=
\frac{A_{ij}}2(\epsilon_i a+\epsilon_j b)^2
+
\frac{B_{ij}}2(\epsilon_i a-\epsilon_j b)^2,
\end{equation}
where
\begin{equation*}
A_{ij}=\frac{g_i-g_j}{x_i-x_j},
\qquad
B_{ij}=\frac{g_i+g_j}{x_i+x_j}.
\end{equation*}
At $x_i=x_j$, $A_{ij}$ is interpreted by its continuous limit.
\end{lemma}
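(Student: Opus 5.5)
The plan is to expand $\Phi(W_*+H)$ to second order using a symmetry reduction, and then read off each block of the Hessian.

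\emph{Reduction to $X$.} Left multiplication by the orthogonal matrix $D_\epsilon$ preserves singular values, so $\Phi(D_\epsilon X+H)=\Phi(X+D_\epsilon H)$. It therefore suffices to compute the Hessian at the positive diagonal matrix $X$ in the direction $K=D_\epsilon H$. The map $H\mapsto D_\epsilon H$ preserves each summand of \eqref{eq:matrix-splitting}: it sends $\diag(h)$ to $\diag(D_\epsilon h)$, and $aE_{ij}+bE_{ji}$ to $\epsilon_iaE_{ij}+\epsilon_jbE_{ji}$. This substitution produces exactly the sign patterns appearing in \eqref{eq:hessian-diagonal} and \eqref{eq:hessian-offdiag}.

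\emph{Invariance and block structure.} Conjugation $W\mapsto SWS$ by the diagonal reflection $S$ that flips the sign of a single coordinate $k$ preserves singular values and fixes $X$. Hence $D^2\Phi(X)$ is invariant under every such conjugation. These conjugations act on $\mathcal D$ trivially, and on $\mathcal V_{ij}$ by the character $s_is_j$. Two distinct pairs $\{i,j\}\neq\{k,l\}$ give different characters for some choice of $S$, and $\mathcal D$ carries the trivial character. The Hessian is an invariant bilinear form, so it pairs only summands with equal characters. This gives the orthogonal splitting.

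\emph{Diagonal block.} Along $X+\diag(k)$ with $k$ small, the singular values are simply $x+k$ (the entries stay positive). Thus $\Phi(X+\diag(k))=f(x+k)$, and the second variation is $k^T\nabla^2f(x)k$ with $k=D_\epsilon h$.

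\emph{Off-diagonal block.} Restricting to $X+aE_{ij}+bE_{ji}$ reduces the problem to the $2\times2$ block
\[
M=\begin{pmatrix}x_i&a\\ b&x_j\end{pmatrix},
\]
with all other singular values fixed. Change variables to $u=(a+b)/2$ and $v=(a-b)/2$, so that $M$ is the symmetric part plus $v$ times a rotation generator. To second order, the singular values of $M$ are:
\begin{enumerate}[label=\textup{(\alph*)}]
\item for the symmetric perturbation, the eigenvalues $x_{i,j}\pm u^2/(x_i-x_j)$;
\item for the antisymmetric perturbation, the values $x_{i,j}\pm v^2/(x_i+x_j)$. The sign here comes from $\sigma_i^2+\sigma_j^2=x_i^2+x_j^2+2v^2$ and $\sigma_i\sigma_j=x_ix_j+v^2$.
\end{enumerate}
The cross term in $uv$ vanishes by the symmetry $(u,v)\mapsto(u,-v)$, which corresponds to transposition and preserves singular values. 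Since the first-order variation of the singular values is zero in these directions, $D^2\Phi$ only sees the gradient paired with the second-order shifts:
\[
g_i\delta\sigma_i+g_j\delta\sigma_j=\frac{g_i-g_j}{x_i-x_j}u^2+\frac{g_i+g_j}{x_i+x_j}v^2 .
\]
The second variation is twice this quantity, namely $2A_{ij}u^2+2B_{ij}v^2$. Substituting $u=(\epsilon_ia+\epsilon_jb)/2$ and $v=(\epsilon_ia-\epsilon_jb)/2$ gives \eqref{eq:hessian-offdiag}.

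\emph{Main obstacle.} The delicate step is the case $x_i=x_j$, where the eigenvalue expansion in (a) degenerates. There I would instead argue by continuity. The Hessian of $\Phi$ is continuous in $W_*$, because $\Phi$ is $C^2$ by the spectral-function theory cited in \cref{sec:setup}. The right-hand side of \eqref{eq:hessian-offdiag} is also continuous, since $A_{ij}$ extends continuously by symmetry of $f$ (so $g_i=g_j$ when $x_i=x_j$). Taking limits from $x_i\neq x_j$ therefore yields the formula at coincident values.
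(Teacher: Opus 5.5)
Your proposal is correct and follows the paper's route. Multiplying by $D_\epsilon$ is exactly the paper's passage to the singular frame of $W_*$, the diagonal block and the continuity argument at $x_i=x_j$ are the same, and your sign-flip character argument and symmetric/antisymmetric split (with transposition killing the $uv$ cross term) are more explicit versions of the paper's ``orthogonal invariance'' remark and its direct second-order perturbation of $X_{ij}(t)^TX_{ij}(t)$. One slip: in (b) both singular values shift by $+v^2/(x_i+x_j)$, not $\pm$, because your relations give $\sigma_i+\sigma_j=\sqrt{(x_i+x_j)^2+4v^2}$ and $|\sigma_i-\sigma_j|=|x_i-x_j|$; this is what your next display already uses when it pairs $v^2$ with $(g_i+g_j)/(x_i+x_j)$.
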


\begin{proof}
The diagonal formula follows because, for sufficiently small $t$, the singular values of
\[
W_*+tH=\diag(\epsilon_ix_i+th_i)
\]
are $x_i+t\epsilon_i h_i$.

For the off-diagonal formula, pass to the singular frame of $W_*$, where the perturbation has entries
\[
k_{ij}=\epsilon_i a,
\qquad
k_{ji}=\epsilon_j b.
\]
It is enough to work in the $(i,j)$ block.  Assume first that $x_i\neq x_j$.  For
\[
X_{ij}(t)=
\begin{pmatrix}
 x_i&t k_{ij}\\
 t k_{ji}&x_j
\end{pmatrix},
\]
the eigenvalues of $X_{ij}(t)^TX_{ij}(t)$ give the second derivatives of the two singular values.  A direct $2\times2$ perturbation calculation yields
\begin{align*}
D^2\Phi(W_*)[H,H]
&=
\frac12\frac{g_i-g_j}{x_i-x_j}(k_{ij}+k_{ji})^2\\
&\quad+
\frac12\frac{g_i+g_j}{x_i+x_j}(k_{ij}-k_{ji})^2.
\end{align*}
Substitution of $k_{ij},k_{ji}$ gives \eqref{eq:hessian-offdiag}.  Orthogonal invariance, or polarization of the same calculation, shows that distinct spaces in \eqref{eq:matrix-splitting} do not couple.  The repeated-value formula follows by continuity of the Hessian of a smooth symmetric singular-value function; see also \cite{LewisSendov2001,DrusvyatskiyKempton2015}.
\end{proof}

Apply \cref{lem:spectral-Hessian} to $f=\Sn$ at a critical point $W_\epsilon$.

\begin{lemma}[Signs of the off-diagonal entropy modes]\label{lem:AB-signs}
Assume $N>2$.  At every positive $x$,
\[
A_{ij}<0,
\qquad
B_{ij}>0.
\]
\end{lemma}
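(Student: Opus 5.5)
The two signs come from different sources. The sign of $A_{ij}$ is a direct consequence of the reverse ordering of the entropy gradient, while the sign of $B_{ij}$ follows from positivity of that gradient.

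\emph{The sign of $A_{ij}$.} Fix $i<j$ and assume first that $x_i\neq x_j$. By \cref{lem:gradient-order}, the numbers $x_i-x_j$ and $g_i-g_j$ are nonzero and of opposite sign. Hence
\[
A_{ij}=\frac{g_i-g_j}{x_i-x_j}<0.
\]

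\emph{The sign of $B_{ij}$.} By \cref{cor:g-positive} we have $g_i,g_j>0$, and $x_i+x_j>0$, so $B_{ij}>0$ at every positive $x$, with no case distinction.

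\emph{The repeated case $x_i=x_j$.} This is the only delicate point, because continuity alone yields only $A_{ij}\le0$. Here $A_{ij}$ is defined as the continuous limit of the difference quotient. Since $\Sn$ is real analytic and symmetric under the transposition $\tau$ exchanging coordinates $i$ and $j$, this limit equals
\[
A_{ij}=\partial_i g_i-\partial_j g_i=\partial_{ii}\Sn-\partial_{ij}\Sn
\]
evaluated at $x$. To see this, move along the direction $e_i-e_j$ and use $\partial_j g_j=\partial_i g_i$ at points fixed by $\tau$.

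Now set $v=e_i-e_j$. Then
\[
v^T\nabla^2\Sn(x)\,v=\partial_{ii}\Sn+\partial_{jj}\Sn-2\partial_{ij}\Sn=2(\partial_{ii}\Sn-\partial_{ij}\Sn)=2A_{ij},
\]
where the middle equality uses $\partial_{jj}\Sn=\partial_{ii}\Sn$ at a $\tau$-fixed point. By \cref{thm:strict-concavity}, $\nabla^2\Sn(x)\prec0$, so $v^T\nabla^2\Sn(x)v<0$. Therefore $A_{ij}<0$ in the repeated case as well.

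Once one uses the Hessian quadratic form rather than a limit of strict inequalities, the repeated case is routine bookkeeping.
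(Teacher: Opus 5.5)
Your proof is correct and follows the paper's argument: \cref{lem:gradient-order} for $x_i\neq x_j$, the second derivative of $\Sn$ in the $e_i-e_j$ direction with \cref{thm:strict-concavity} for the repeated case, and \cref{cor:g-positive} for $B_{ij}>0$. The only addition is that you spell out the identification $A_{ij}=\partial_{ii}\Sn-\partial_{ij}\Sn$ at a $\tau$-fixed point, which the paper leaves implicit.
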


\begin{proof}
If $x_i\neq x_j$, the first inequality is \cref{lem:gradient-order}.  If $x_i=x_j$, it follows from negative definiteness of $\nabla^2\Sn$ by taking the second derivative in the $e_i-e_j$ direction.  The second inequality follows from $g_i,g_j>0$ and $x_i,x_j>0$.
\end{proof}

\begin{theorem}[Universal Morse index]\label{thm:Morse-index}
At every critical point $W_\epsilon$, the Hessian of $F_\beta$ has
\[
\frac{d(d+1)}2
\]
positive eigenvalues and
\[
\binom d2
\]
negative eigenvalues.  It has no zero eigenvalues.
\end{theorem}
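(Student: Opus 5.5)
The plan is to compute the Euclidean Hessian of $F_\beta=E_d-\beta^{-1}S_N$ at $W_\epsilon=D_\epsilon X$ block by block, using the Frobenius-orthogonal splitting \eqref{eq:matrix-splitting} from \cref{lem:spectral-Hessian}, and then count signs with Sylvester's law of inertia.

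\emph{The energy Hessian.} The energy is a quadratic function of the diagonal entries alone. Hence $D^2E_d(W)[H,H]=\sum_i H_{ii}^2$ at every $W$. This form respects the splitting: it is the identity on $\mathcal D$ and vanishes identically on every $\mathcal V_{ij}$. \cref{lem:spectral-Hessian}, applied with $f=\Sn$, shows that $D^2S_N(W_\epsilon)$ also respects the splitting. It therefore suffices to determine the inertia of $D^2F_\beta(W_\epsilon)$ on $\mathcal D$ and on each two-dimensional space $\mathcal V_{ij}$ separately.

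\emph{Diagonal block.} For $H=\diag(h)$, \eqref{eq:hessian-diagonal} gives
\[
D^2F_\beta(W_\epsilon)[H,H]=|h|^2-\frac1\beta(D_\epsilon h)^T\nabla^2\Sn(x^\epsilon)(D_\epsilon h).
\]
Since $D_\epsilon$ is orthogonal, this is $k^T\bigl(I-\beta^{-1}\nabla^2\Sn(x^\epsilon)\bigr)k$ with $k=D_\epsilon h$. By \cref{thm:strict-concavity} the matrix $I-\beta^{-1}\nabla^2\Sn(x^\epsilon)$ is positive definite, so $\mathcal D$ contributes $d$ positive eigenvalues.

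\emph{Off-diagonal blocks.} On $\mathcal V_{ij}$ the energy term vanishes. By \eqref{eq:hessian-offdiag}, for $H=aE_{ij}+bE_{ji}$,
\[
D^2F_\beta(W_\epsilon)[H,H]=\frac{-A_{ij}}{2\beta}\,u^2-\frac{B_{ij}}{2\beta}\,v^2,
\qquad u=\epsilon_ia+\epsilon_jb,\quad v=\epsilon_ia-\epsilon_jb.
\]
The map $(a,b)\mapsto(u,v)$ is invertible, since its determinant is $-2\epsilon_i\epsilon_j\neq0$. \cref{lem:AB-signs} gives $-A_{ij}>0$ and $-B_{ij}<0$, including the case $x_i=x_j$ through the continuous interpretation of $A_{ij}$. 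By Sylvester's law, each $\mathcal V_{ij}$ therefore contributes exactly one positive and one negative eigenvalue.

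\emph{Counting and linearization.} Summing over the blocks gives $d+\binom d2=d(d+1)/2$ positive eigenvalues, $\binom d2$ negative eigenvalues, and no zero eigenvalues. For the stable and unstable dimensions in \cref{thm:main}(iii), the linearization of \eqref{eq:free-flow} at an equilibrium is $-\Aop_{N,W_\epsilon}\circ D^2F_\beta(W_\epsilon)$, since the term involving the derivative of $\Aop$ multiplies $F_\beta'(W_\epsilon)=0$. Because $\Aop_{N,W_\epsilon}$ is positive definite and self-adjoint, this operator is similar to $-\Aop^{1/2}D^2F_\beta\Aop^{1/2}$. That operator is symmetric, so its eigenvalues are real and nonzero, and by inertia their signs are those of $-D^2F_\beta$.

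The only delicate input is the block structure and the off-diagonal formula, which are supplied by \cref{lem:spectral-Hessian}. The key point I would double-check is the sign bookkeeping: $A_{ij}<0$, which comes from the strict reverse ordering of the entropy gradient, is exactly what turns the $u$-mode into a stable direction.
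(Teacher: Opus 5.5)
Your proposal is correct and follows the paper's proof essentially step for step. You use the splitting $\mathcal D\oplus\bigoplus_{i<j}\mathcal V_{ij}$, obtain positive definiteness on $\mathcal D$ from strict concavity of $\Sn$, and get exactly one positive and one negative direction on each $\mathcal V_{ij}$ from $A_{ij}<0<B_{ij}$. Your closing linearization remark reproduces the paper's separate hyperbolicity corollary, which also uses the similarity to $-\Aop^{1/2}D^2F_\beta\Aop^{1/2}$ and Sylvester's law.
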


\begin{proof}
On the diagonal subspace $\mathcal D$, the energy Hessian is the identity.  By \eqref{eq:hessian-diagonal} and \cref{thm:strict-concavity}, the entropy Hessian is negative definite after a diagonal sign congruence.  Hence
\[
D^2F_\beta|_{\mathcal D}
=D^2E_d|_{\mathcal D}-\frac1\beta D^2S_N|_{\mathcal D}
\]
is positive definite.  This gives $d$ positive directions.

On $\mathcal V_{ij}$, the energy Hessian vanishes.  By \eqref{eq:hessian-offdiag} and \cref{lem:AB-signs}, the entropy Hessian has one negative and one positive eigenvalue.  Therefore $-\beta^{-1}D^2S_N$ has one positive and one negative eigenvalue on each $\mathcal V_{ij}$.  There are $\binom d2$ such pair spaces.  Adding the counts gives
\[
d+\binom d2=\frac{d(d+1)}2
\]
positive directions and $\binom d2$ negative directions, with no kernel.
\end{proof}

The unstable off-diagonal line associated with the pair $(i,j)$ is particularly explicit.  It is the line on which
\[
\epsilon_i a+\epsilon_j b=0,
\]
namely
\begin{equation*}
\operatorname{span}\{E_{ij}-\epsilon_i\epsilon_jE_{ji}\}.
\end{equation*}
The complementary off-diagonal line
\[
\operatorname{span}\{E_{ij}+\epsilon_i\epsilon_jE_{ji}\}
\]
is stable at the Hessian level.

\subsection{Linearization of the Riemannian flow}

Let
\[
\mathcal X(W)=-\Aop_{N,W}(F_\beta'(W))
\]
denote the vector field.  At a critical point, the derivative of $\Aop_{N,W}$ is multiplied by $F_\beta'(W_\epsilon)=0$.  Hence
\begin{equation}\label{eq:linearization}
D\mathcal X(W_\epsilon)
=-\Aop_{N,W_\epsilon}\circ D^2F_\beta(W_\epsilon).
\end{equation}

\begin{corollary}[Hyperbolicity and stable dimensions]\label{cor:hyperbolic}
Every $W_\epsilon$ is a hyperbolic equilibrium of the free-energy flow.  Its stable and unstable dimensions are
\[
\dim E^s=\frac{d(d+1)}2,
\qquad
\dim E^u=\binom d2.
\]
\end{corollary}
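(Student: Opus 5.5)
The plan is to use the linearization formula \eqref{eq:linearization}, $D\mathcal X(W_\epsilon)=-\Aop\circ D^2F_\beta(W_\epsilon)$, with $\Aop=\Aop_{N,W_\epsilon}$ positive definite and Frobenius self-adjoint, and $D^2F_\beta(W_\epsilon)$ the symmetric Hessian from \cref{thm:Morse-index}. The key observation is that a product of a positive definite operator and a symmetric operator is similar to a symmetric operator. Concretely,
\[
\Aop^{-1/2}\bigl(\Aop\, D^2F_\beta\bigr)\Aop^{1/2}=\Aop^{1/2}D^2F_\beta\,\Aop^{1/2}.
\]
This conjugate is symmetric with respect to $\ip{\cdot}{\cdot}$ and congruent to $D^2F_\beta$. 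So the spectrum of $\Aop\,D^2F_\beta$ is real, and by Sylvester's law of inertia its numbers of positive, negative and zero eigenvalues equal those of $D^2F_\beta$. Equivalently, $\Aop D^2F_\beta$ is self-adjoint for the metric $g^N_{W_\epsilon}$.

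First I would record that $\Aop$ is self-adjoint and positive definite on $\Md$. This holds because \eqref{eq:A-spectrum} diagonalizes it in the Frobenius-orthonormal frame $u_iv_j^T$ with positive eigenvalues $a_{ij}(\sigma)$. At the diagonal point $W_\epsilon=D_\epsilon X$ one may take $U=D_\epsilon$ and $V=I$, so $u_iv_j^T=\epsilon_iE_{ij}$. Hence $\Aop$ even preserves the splitting \eqref{eq:matrix-splitting}, though this is not needed. Then I would form the square root $\Aop^{1/2}$ in the same frame and apply the congruence argument above.

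By \cref{thm:Morse-index}, $D^2F_\beta(W_\epsilon)$ has $d(d+1)/2$ positive eigenvalues, $\binom d2$ negative eigenvalues and trivial kernel. Therefore $D\mathcal X(W_\epsilon)=-\Aop D^2F_\beta$ has real nonzero eigenvalues: $d(d+1)/2$ of them negative, giving $E^s$, and $\binom d2$ positive, giving $E^u$. Since there are no eigenvalues on the imaginary axis, the equilibrium is hyperbolic.

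The only point needing care is justifying real spectrum and diagonalizability, so that the stable and unstable dimensions are exactly the eigenvalue counts. The similarity to a symmetric operator settles both. I expect no real obstacle beyond confirming that the derivative of $W\mapsto\Aop_{N,W}$ contributes nothing, and this is already handled in \eqref{eq:linearization} because $F_\beta'(W_\epsilon)=0$.
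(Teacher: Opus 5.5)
Your proposal is correct and follows the paper's proof essentially verbatim. Both arguments use the similarity of $\Aop_{N,W_\epsilon}D^2F_\beta(W_\epsilon)$ to the symmetric operator $\Aop_{N,W_\epsilon}^{1/2}D^2F_\beta(W_\epsilon)\Aop_{N,W_\epsilon}^{1/2}$, then Sylvester's law of inertia, then the inertia count from \cref{thm:Morse-index}, with the minus sign in \eqref{eq:linearization} making positive Hessian directions stable and negative ones unstable.
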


\begin{proof}
The operator $\Aop_{N,W_\epsilon}$ is Frobenius self-adjoint and positive definite.  Therefore
\[
\Aop_{N,W_\epsilon}D^2F_\beta(W_\epsilon)
\]
is similar to the symmetric matrix
\[
\Aop_{N,W_\epsilon}^{1/2}
D^2F_\beta(W_\epsilon)
\Aop_{N,W_\epsilon}^{1/2}.
\]
By Sylvester's law of inertia, the latter has the same inertia as $D^2F_\beta(W_\epsilon)$.  The minus sign in \eqref{eq:linearization} makes positive Hessian directions dynamically stable and negative Hessian directions unstable.  The result follows from \cref{thm:Morse-index}.
\end{proof}

\section{The rank boundary cannot be a finite limit}\label{sec:rank-boundary}

\subsection{The Lyapunov identity}

Along the free-energy flow,
\begin{align}
\frac{d}{dt}F_\beta(W(t))
&=
\ip{F_\beta'(W)}{\dot W}\notag\\
&=
-\ip{F_\beta'(W)}{\Aop_{N,W}F_\beta'(W)}\notag\\
&=
-\normg{\grad_{g^N}F_\beta(W)}^2
\leq0.
\label{eq:Lyapunov}
\end{align}
Thus $F_\beta$ is a strict Lyapunov function on $\GLd$.

\subsection{An infinite barrier at corank at least two}

\begin{proposition}[Corank at least two]\label{prop:corank-two}
Let $W(t)$ be a full-rank solution.  It cannot converge to a finite matrix $W_*$ with
\[
\rank W_*\leq d-2.
\]
\end{proposition}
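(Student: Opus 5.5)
The argument combines the Lyapunov identity \eqref{eq:Lyapunov} with the infinite entropy barrier of \cref{prop:entropy-boundary-general}. Suppose for contradiction that $W(t)\in\GLd$ is a solution on its maximal forward interval $[0,T)$, with $T\leq\infty$, and that $W(t)\to W_*$ as $t\to T$, where $\rank W_*\leq d-2$.

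The first step is to note that the trajectory is bounded near $T$. Since $W(t)$ converges to the finite matrix $W_*$, there is $t_0<T$ such that $\{W(t):t\in[t_0,T)\}$ lies in a bounded subset of $\Md$. Moreover, $W(t)$ stays in $\GLd$ for every $t<T$, so $F_\beta(W(t))$ is defined along the whole orbit.

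Next, apply \cref{prop:entropy-boundary-general}(i) along any sequence $t_m\to T$. The sequence $W_m=W(t_m)$ is bounded and converges to a matrix of rank at most $d-2$, so $S_N(W(t_m))\to-\infty$. The energy $E_d$ is continuous on $\Md$, so $E_d(W(t_m))\to E_d(W_*)<\infty$. Hence
\[
F_\beta(W(t_m))=E_d(W(t_m))-\frac1\beta S_N(W(t_m))\longrightarrow+\infty .
\]

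On the other hand, \eqref{eq:Lyapunov} shows that $t\mapsto F_\beta(W(t))$ is nonincreasing. Therefore $F_\beta(W(t_m))\leq F_\beta(W(0))<\infty$ for every $m$, which contradicts the divergence above. This proves the proposition.

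The only delicate point is applying \cref{prop:entropy-boundary-general}(i) correctly. That result relies on boundedness of the sequence, which is used to bound the remaining pair terms above via \cref{lem:entropy-upper}. In our setting boundedness comes for free from convergence to a finite limit. The case $T<\infty$ needs no separate treatment, because the argument uses only monotonicity of $F_\beta$ along the orbit and the limiting behavior of $F_\beta$ near $W_*$.
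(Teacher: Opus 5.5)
Your proposal is correct and follows essentially the same route as the paper. Boundedness of the trajectory near the limit makes $S_N\to-\infty$ through the pair term of the two vanishing singular values, which you invoke via \cref{prop:entropy-boundary-general} rather than redoing the estimate from \eqref{eq:q-ratio}; since the energy stays finite, $F_\beta\to+\infty$, contradicting the monotonicity in \eqref{eq:Lyapunov}. Allowing a finite terminal time $T$ is a harmless extension.
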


\begin{proof}
If $W(t)\to W_*$ with rank at most $d-2$, then
\[
\sigma_{d-1}(t)\to0,
\qquad
\sigma_d(t)\to0,
\]
while all singular values remain bounded.  By \eqref{eq:q-ratio},
\[
q_N(\sigma_{d-1},\sigma_d)
\leq
\cN\log\sigma_{d-1}+C
\longrightarrow-\infty.
\]
Every other pair term is bounded above along the bounded trajectory.  Hence
\[
S_N(W(t))\longrightarrow-\infty.
\]
The energy has a finite limit, so $F_\beta(W(t))\to+\infty$, contradicting the monotonicity \eqref{eq:Lyapunov}.
\end{proof}

\begin{remark}
This argument is stronger than a local vector-field calculation.  The full-rank free energy has an infinite barrier at every finite matrix of corank at least two.
\end{remark}

\subsection{Repulsion from the corank-one boundary}

Suppose the smallest singular value is simple.  Let $u_i,v_i$ be corresponding singular vectors and set
\begin{equation*}
e_i(W)=u_i^TE_d'(W)v_i.
\end{equation*}
By \eqref{eq:A-spectrum} and the standard singular-value derivative formula,
\begin{equation}\label{eq:sigma-flow}
\dot\sigma_i
=-N\sigma_i^{2-2/N}e_i
+
\frac N\beta\sigma_i^{2-2/N}g_i.
\end{equation}

\begin{proposition}[Corank-one repulsion]\label{prop:corank-one}
Let $N>2$.  A full-rank solution cannot converge to a finite matrix of rank $d-1$.
\end{proposition}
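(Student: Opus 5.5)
Argue by contradiction.  Suppose $W(t)\to W_*$ with $\rank W_*=d-1$.  Let $\tau_1\geq\cdots\geq\tau_{d-1}>0$ be the positive singular values of $W_*$.  By continuity of singular values, for large $t$ the singular values satisfy $\sigma_i(t)\in[m,M]$ for $i<d$ with $0<m<M$, and $\sigma_d(t)\downarrow 0$.  In particular $\sigma_d$ is simple for large $t$, so $\sigma_d$ is smooth along the trajectory and \eqref{eq:sigma-flow} applies with $i=d$.  The plan is to show that the right-hand side of \eqref{eq:sigma-flow} is eventually strictly positive.  Then $\sigma_d(t)$ is eventually increasing, which is incompatible with $\sigma_d(t)\to0$ while $\sigma_d(t)>0$.

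\textbf{Step 1: the entropy term.}  Write $p=2-2/N$.  Then
\[
N\sigma_d^{p}g_d=\sum_{i<d}N\sigma_d^p\,\partial_b q_N(\sigma_i,\sigma_d).
\]
Apply \eqref{eq:boundary-drift-pair} of \cref{lem:boundary-derivative}, which holds uniformly for $a=\sigma_i\in[m,M]$.  This gives
\[
N\sigma_d^pg_d=\Bigl(\sum_{i<d}\sigma_i^{-2/N}\Bigr)\sigma_d+o(\sigma_d)\geq c\,\sigma_d
\]
for large $t$, where $c=\tfrac12(d-1)M^{-2/N}>0$.  If one prefers to avoid the $o(\cdot)$, the exact identity \eqref{eq:boundary-drift-exact} gives the same lower bound directly: its first term is at least $\sigma_d/M^{2/N}$, and its second term is $O(\sigma_d^{3-2/N})=o(\sigma_d)$.

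\textbf{Step 2: the energy term.}  Since $W(t)$ converges, it is bounded, so $E_d'(W)=\diag(W_{ii}-1)$ is bounded.  With unit singular vectors $u_d,v_d$ this gives $|e_d|\leq \normF{E_d'(W)}\leq K$.  Hence the energy contribution satisfies
\[
\bigl|N\sigma_d^{p}e_d\bigr|\leq NK\sigma_d^{2-2/N}.
\]
Because $N>2$, the exponent $2-2/N$ exceeds $1$, so this is $o(\sigma_d)$.  This is exactly where the hypothesis $N>2$ enters.

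\textbf{Step 3: conclude.}  Combining the two steps, for all sufficiently large $t$,
\[
\dot\sigma_d\geq \frac{c}{\beta}\sigma_d-NK\sigma_d^{2-2/N}\geq\frac{c}{2\beta}\sigma_d>0.
\]
Hence $\sigma_d(t)\geq\sigma_d(t_0)e^{c(t-t_0)/(2\beta)}$, contradicting $\sigma_d\to0$.

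The main obstacle is justifying the use of \eqref{eq:sigma-flow} for the smallest singular value along the whole tail of the trajectory.  This needs simplicity of $\sigma_d$, which follows once $\sigma_d<m\leq\sigma_{d-1}$.  It also needs the identification of $\Aop_{N,W}$ acting on the $u_dv_d^T$ component, which is provided by \eqref{eq:A-spectrum}.  We will also check that the derivative of $\sigma_d$ equals $u_d^T\dot W v_d$; this is the standard formula for a simple singular value.
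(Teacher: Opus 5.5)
Your proposal is correct and follows essentially the same route as the paper. \cref{lem:boundary-derivative} makes the entropy drift in \eqref{eq:sigma-flow} linear in $\sigma_d$, while the bounded energy term is $O(\sigma_d^{2-2/N})=o(\sigma_d)$ for $N>2$, so eventually $\dot\sigma_d\geq c\sigma_d>0$, contradicting $\sigma_d\to0$. Your explicit lower bound obtained from \eqref{eq:boundary-drift-exact} is the same quantitative estimate the paper records in its appendix on uniform corank-one repulsion.
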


\begin{proof}
Suppose that $W(t)\to W_*$ and $\rank W_*=d-1$.  Then
\[
\sigma_d(t)\to0,
\]
while $\sigma_1(t),\ldots,\sigma_{d-1}(t)$ converge to positive numbers.  For large $t$, $\sigma_d$ is isolated and differentiable.  Put $b=\sigma_d$ and $a_j=\sigma_j$ for $j<d$.  By \cref{lem:boundary-derivative},
\begin{equation*}
g_d
=
\frac1N b^{2/N-1}
\sum_{j<d}a_j^{-2/N}
\bigl(1+o(1)\bigr).
\end{equation*}
The quantity $e_d$ is bounded because $W(t)$ is bounded.  Substitution into \eqref{eq:sigma-flow} yields
\begin{equation}\label{eq:sigmad-repulsion}
\dot b
=
\frac1\beta
\left(\sum_{j<d}a_j^{-2/N}\right)b
-
N e_d b^{2-2/N}
+o(b).
\end{equation}
Since $N>2$,
\[
2-\frac2N>1,
\]
so $b^{2-2/N}=o(b)$.  The positive linear term in \eqref{eq:sigmad-repulsion} dominates.  Hence there are $c>0$ and $T$ such that
\[
\dot\sigma_d(t)\geq c\sigma_d(t)>0
\qquad
\text{for }t\geq T.
\]
This is incompatible with $\sigma_d(t)\to0$.
\end{proof}

\begin{corollary}[No finite singular limit]\label{cor:no-singular-limit}
For $N>2$, no solution starting in $\GLd$ converges to a finite rank-deficient matrix.
\end{corollary}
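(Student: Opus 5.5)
The plan is to combine the two preceding propositions by a case split on the rank of the putative limit. Suppose $W(t)$ is a solution with $W(0)\in\GLd$, defined on its maximal forward interval, and suppose $W(t)\to W_*$ with $W_*$ a finite matrix satisfying $\rank W_*<d$. We first check that the solution stays in $\GLd$ for all times at which it is defined. The vector field in \eqref{eq:free-flow} is only defined on $\GLd$, so this holds by construction. Convergence to a finite limit also forces the trajectory to be bounded, which is the hypothesis used in both propositions. If the maximal interval is finite, "converges" should be read as convergence as $t$ increases to the maximal time. The arguments below only use monotonicity of $F_\beta$ and a differential inequality for $\sigma_d$ along the trajectory, so they apply in that case as well.

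\emph{Case $\rank W_*\le d-2$.} Here we invoke \cref{prop:corank-two} directly. The Lyapunov identity \eqref{eq:Lyapunov} gives $F_\beta(W(t))\le F_\beta(W(0))<\infty$. On the other hand, \cref{prop:entropy-boundary-general}(i) gives $S_N(W(t))\to-\infty$, while $E_d(W(t))$ stays bounded. Together these force $F_\beta(W(t))\to+\infty$, which contradicts the bound from monotonicity. This step needs no finiteness of the time interval.

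\emph{Case $\rank W_*=d-1$.} Here we invoke \cref{prop:corank-one}, which uses $N>2$. Near $W_*$ the smallest singular value is simple, because $\sigma_{d-1}(W_*)>0=\sigma_d(W_*)$. Hence $\sigma_d(t)$ is smooth for times close to the end of the interval, and \eqref{eq:sigma-flow} applies. The asymptotics of \cref{lem:boundary-derivative} then give $\dot\sigma_d\ge c\,\sigma_d>0$ near the end of the interval. So $\sigma_d$ is eventually increasing and positive, which is incompatible with $\sigma_d\to0$.

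The main point requiring care is that \cref{prop:corank-one} was phrased for $t\to\infty$. If the maximal existence time were finite, the same inequality would still show that $\sigma_d$ is eventually increasing, so $\sigma_d\not\to0$ and the contradiction persists. The two cases exhaust all rank-deficient $W_*$, which proves the corollary.
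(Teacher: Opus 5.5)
Your proposal is correct and follows the paper's argument: the corollary is just the case split on $\rank W_*$, using \cref{prop:corank-two} (monotonicity of $F_\beta$ against the infinite entropy barrier) for corank at least two and \cref{prop:corank-one} (the positive linear drift $\dot\sigma_d\ge c\,\sigma_d$, valid since $N>2$) for corank one. Your remark that both arguments also rule out convergence as $t$ approaches a finite maximal time is a harmless refinement, since the paper proves forward completeness only later.
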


\begin{remark}[Why $N=2$ is different]\label{rem:N2-boundary}
At depth $N=2$, the energy term in \eqref{eq:sigmad-repulsion} is also linear in $\sigma_d$.  The entropy term no longer dominates automatically, and the sign of the total coefficient depends on the state.  This is one of the two places where the proof of \cref{thm:main} genuinely uses $N>2$.
\end{remark}

\section{Forward completeness}\label{sec:complete}

The vector field is smooth on $\GLd$, but the metric is incomplete at the rank boundary and the free energy is unbounded below at infinity.  We therefore prove forward completeness directly.

\subsection{A metric lower bound for radial escape}

\begin{lemma}[Radial length estimate]\label{lem:radial-length}
Let $W:[a,b]\to\GLd$ be an absolutely continuous curve.  Then
\begin{equation}\label{eq:radial-length}
\operatorname{Length}_{g^N}(W|_{[a,b]})
\geq
\sqrt N\,
\left|
\sigma_1(W(b))^{1/N}-\sigma_1(W(a))^{1/N}
\right|.
\end{equation}
\end{lemma}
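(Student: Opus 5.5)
The plan is to reduce the estimate to a pointwise bound on the metric norm, $\normg{\dot W}\geq \sqrt N\,|\tfrac{d}{dt}\sigma_1^{1/N}|$, and then integrate. Since $\sigma_1$ is only Lipschitz (it fails to be differentiable where the top singular value is repeated), I will avoid differentiating it directly. Instead I use the variational description $\sigma_1(W)=\max_{\|u\|=\|v\|=1}u^TWv$. This shows that $\sigma_1\circ W$ is absolutely continuous, with $|\tfrac{d}{dt}\sigma_1(W(t))|\leq |u^T\dot Wv|$ almost everywhere, where $u,v$ are top singular vectors at time $t$. The justification is a Danskin-type one-sided derivative argument, or a comparison of difference quotients using the maximizer at one endpoint.

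The main step is the pointwise inequality. Fix $W=U\Sigma V^T$ and a tangent vector $P$, and expand $P=\sum_{i,j}p_{ij}u_iv_j^T$ in the singular frame. By \eqref{eq:A-spectrum}, this frame diagonalizes $\Aop_{N,W}$, so
\[
\normg{P}^2=\ip{P}{\Aop_{N,W}^{-1}P}=\sum_{i,j}\frac{p_{ij}^2}{a_{ij}(\sigma)}\geq \frac{p_{11}^2}{a_{11}(\sigma)}=\frac{(u_1^TPv_1)^2}{N\sigma_1^{2-2/N}}.
\]
Take $P=\dot W$. If $\sigma_1$ is repeated, $u_1,v_1$ can be any top pair, and the same bound holds because $a_{ij}=N\sigma_1^{2-2/N}$ on the whole top block. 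This gives, for almost every $t$,
\[
\Bigl|\frac{d}{dt}\sigma_1\Bigr|\leq \sqrt N\,\sigma_1^{1-1/N}\normg{\dot W}.
\]
Since $\frac{d}{dt}\sigma_1^{1/N}=\frac1N\sigma_1^{1/N-1}\frac{d}{dt}\sigma_1$ and $\sigma_1>0$ on $\GLd$, it follows that $|\frac{d}{dt}\sigma_1^{1/N}|\leq N^{-1/2}\normg{\dot W}$.

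Integrating over $[a,b]$ then gives
\[
|\sigma_1(W(b))^{1/N}-\sigma_1(W(a))^{1/N}|\leq N^{-1/2}\,\mathrm{Length}_{g^N}.
\]
This is \eqref{eq:radial-length} up to the constant. Note that the computation produces the factor $1/\sqrt N$ rather than $\sqrt N$. I therefore expect the one delicate point to be the normalization: whether the paper's metric $g^N$ carries a convention that converts $N^{-1/2}$ into $\sqrt N$, or whether the stated constant should be read accordingly. The normalization does not affect the intended use, namely that escape of $\sigma_1$ to infinity in finite time requires infinite length. Apart from this, the only technical care needed is the nonsmoothness of $\sigma_1$ at multiplicity, which the max-formula handles.
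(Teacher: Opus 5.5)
Your argument is correct and is essentially the paper's proof. Both use that $u_1v_1^T$ is an eigenvector of $\Aop_{N,W}$ with eigenvalue $N\sigma_1^{2-2/N}$ to obtain $|\dot\sigma_1|\le\sqrt N\,\sigma_1^{1-1/N}\normg{\dot W}$ almost everywhere, then pass to $\sigma_1^{1/N}$ by the chain rule and integrate; your max-formula (Danskin) handling of a repeated top singular value, which actually gives $\dot\sigma_1=u^T\dot Wv$ for every top pair at differentiability times, cleanly replaces the paper's appeal to eigenvalues of the symmetric part of the top singular block.

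Your closing worry about the constant is unfounded. The inequality $|\sigma_1(W(b))^{1/N}-\sigma_1(W(a))^{1/N}|\le N^{-1/2}\operatorname{Length}_{g^N}(W|_{[a,b]})$ is exactly \eqref{eq:radial-length} after multiplying both sides by $\sqrt N$; the paper derives the identical pointwise bound $|\tfrac{d}{dt}\sigma_1^{1/N}|\le N^{-1/2}\normg{\dot W}$. So there is no normalization discrepancy, and you have proved the lemma as stated.
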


\begin{proof}
The function $t\mapsto\sigma_1(W(t))$ is absolutely continuous and differentiable almost everywhere.  At a time when the top singular value is simple, the usual first-variation formula and \eqref{eq:A-spectrum} give
\[
|\dot\sigma_1|^2
\leq
N\sigma_1^{2-2/N}\normg{\dot W}^2.
\]
If the top singular value has multiplicity $m>1$, its directional derivatives are eigenvalues of the symmetric part of the corresponding $m\times m$ singular-frame block of $\dot W$.  Their absolute values are bounded by the Frobenius norm of that block.  On the whole top block, including its off-diagonal entries, the continuous value of the coefficient in \eqref{eq:A-spectrum} is $N\sigma_1^{2-2/N}$.  The same inequality therefore holds at every differentiability time; see also the standard singular-value differential calculus in \cite{Lewis1996,LewisSendov2001}.

It follows that
\[
\left|\frac{d}{dt}\sigma_1^{1/N}\right|
=\frac1N\sigma_1^{1/N-1}|\dot\sigma_1|
\leq
\frac1{\sqrt N}\normg{\dot W}
\]
for almost every $t$.  Integration proves \eqref{eq:radial-length}.
\end{proof}

\begin{lemma}[Length-energy estimate]\label{lem:length-energy}
Along a solution of \eqref{eq:free-flow},
\begin{equation*}
\operatorname{Length}_{g^N}(W|_{[0,T]})^2
\leq
T\bigl(F_\beta(W(0))-F_\beta(W(T))\bigr).
\end{equation*}
\end{lemma}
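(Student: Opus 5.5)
The estimate combines the Lyapunov identity \eqref{eq:Lyapunov} with the Cauchy--Schwarz inequality in time.  Along a solution, the speed in the metric $g^N$ equals the gradient norm:
\[
\normg{\dot W}=\normg{\grad_{g^N}F_\beta(W)}.
\]
This holds because $\dot W=-\Aop_{N,W}F_\beta'(W)$ and $g^N_W(\dot W,\dot W)=\ip{\dot W}{\Aop_{N,W}^{-1}\dot W}=\ip{F_\beta'(W)}{\Aop_{N,W}F_\beta'(W)}$.  Hence \eqref{eq:Lyapunov} reads
\[
\frac{d}{dt}F_\beta(W(t))=-\normg{\dot W(t)}^2 .
\]

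Integrating this identity over $[0,T]$ gives the energy dissipation identity
\[
\int_0^T\normg{\dot W(t)}^2\,dt=F_\beta(W(0))-F_\beta(W(T)).
\]
Here we use that the solution is defined on $[0,T]$ and is smooth there, since the vector field is smooth on $\GLd$.  Then the Cauchy--Schwarz inequality yields
\[
\operatorname{Length}_{g^N}(W|_{[0,T]})^2=\Bigl(\int_0^T\normg{\dot W}\,dt\Bigr)^2\leq T\int_0^T\normg{\dot W}^2\,dt .
\]
Substituting the dissipation identity completes the estimate.

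There is no real obstacle.  The only point requiring care is the identification of the metric speed with the gradient norm.  This uses self-adjointness of $\Aop_{N,W}$ together with the definition $g^N_W(P,Q)=\ip{P}{\Aop_{N,W}^{-1}Q}$; the positivity of the $a_{ij}$ in \eqref{eq:A-spectrum} guarantees invertibility.  The statement is implicitly for $T$ within the maximal existence interval.  It will subsequently be combined with \cref{lem:radial-length} to bound the growth of $\sigma_1$ in finite time.
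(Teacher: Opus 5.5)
Your proposal is correct and is the paper's argument: identify the metric speed with $\normg{\grad_{g^N}F_\beta}$, then combine Cauchy--Schwarz in time with the integrated Lyapunov identity \eqref{eq:Lyapunov}. Your explicit check that $g^N_W(\dot W,\dot W)=\ip{F_\beta'(W)}{\Aop_{N,W}F_\beta'(W)}$ fills in the one step the paper states without comment.
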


\begin{proof}
The speed is
\[
\normg{\dot W}=\normg{\grad_{g^N}F_\beta}.
\]
Cauchy-Schwarz and \eqref{eq:Lyapunov} give
\begin{align*}
\operatorname{Length}_{g^N}(W|_{[0,T]})^2
&\leq
T\int_0^T\normg{\grad_{g^N}F_\beta}^2\,dt\\
&=
T\bigl(F_\beta(W(0))-F_\beta(W(T))\bigr).
\end{align*}
\end{proof}

\begin{proposition}[No finite-time escape to infinity]
  \label{prop:no-finite-infinity}
  A solution of \eqref{eq:free-flow} satisfies
  $$\sup_{t<T}\|W(t)\|<\infty$$
  for all $T>0$.
\end{proposition}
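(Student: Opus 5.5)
We argue by contradiction, combining the two preceding lemmas with the logarithmic entropy bound of \cref{lem:entropy-upper}. Suppose the maximal solution lives on $[0,T^*)$ and $\sup_{t<T}\normF{W(t)}=\infty$ for some finite $T\le T^*$. Since all matrix norms are comparable, this means $\sigma_1(W(t))$ is unbounded on $[0,T)$. Write $F_0=F_\beta(W(0))$, $\sigma_1^0=\sigma_1(W(0))$, and $M(t)=\max_{s\le t}\sigma_1(W(s))$.

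\textbf{Step 1 (length controls growth).} For each $t<T$, pick $s\le t$ with $\sigma_1(W(s))=M(t)$. Apply \cref{lem:radial-length} on $[0,s]$ and \cref{lem:length-energy} on $[0,s]$. Together they give
\[
N\bigl(M(t)^{1/N}-(\sigma_1^0)^{1/N}\bigr)_+^2\le s\bigl(F_0-F_\beta(W(s))\bigr)\le T\bigl(F_0-F_\beta(W(s))\bigr).
\]

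\textbf{Step 2 (free energy cannot drop much).} Since $E_d\ge0$, \eqref{eq:entropy-upper} gives
\[
F_\beta(W(s))\ge-\frac1\beta S_N(W(s))\ge -\frac{C_{N,d}}\beta-\frac{\alpha_{N,d}}\beta\log\bigl(1+M(t)\bigr).
\]
Substituting into Step 1 yields
\[
N\bigl(M^{1/N}-(\sigma_1^0)^{1/N}\bigr)_+^2\le T\Bigl(F_0+\tfrac{C_{N,d}}\beta+\tfrac{\alpha_{N,d}}\beta\log(1+M)\Bigr).
\]
The left side grows like $M^{2/N}$ and the right side only logarithmically in $M$. Hence $M(t)$ is bounded by a constant depending only on $T$, $F_0$ and $\sigma_1^0$. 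This contradicts the assumed unboundedness, so $\sup_{t<T}\normF{W(t)}<\infty$.

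\textbf{Main obstacle.} The one delicate point is whether \cref{lem:radial-length} applies: it needs $t\mapsto\sigma_1(W(t))$ to be absolutely continuous with the pointwise bound valid almost everywhere, including at times where $\sigma_1$ is repeated. The lemma already handles this, since along a $C^1$ solution $\sigma_1$ is Lipschitz on compact time intervals. The contradiction step itself is routine, comparing a power $M^{2/N}$ against $\log M$. The argument uses only that $F_\beta$ is bounded below by $-\beta^{-1}\alpha_{N,d}\log(1+\sigma_1)$ up to a constant, and that Length is controlled by $\sqrt{T\cdot\text{(energy drop)}}$.
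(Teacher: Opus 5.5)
Your proof is correct and follows the paper's argument: you combine \cref{lem:radial-length} with \cref{lem:length-energy} and the logarithmic entropy upper bound, and then compare the power growth $M^{2/N}$ against $\log M$. The only difference is cosmetic. You use the running maximum $M(t)$ and a time at which it is attained, where the paper uses the first hitting time $t_R$ of the level $\sigma_1=R$, so you obtain a direct a priori bound instead of a contradiction.
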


\begin{proof}
Suppose a maximal solution has finite terminal time $T_{\max}$ and unbounded norm.  Since $\normF{W}$ is equivalent to $\sigma_1(W)$ up to a dimension-dependent factor, for arbitrarily large $R$ there is a first time $t_R<T_{\max}$ with $\sigma_1(W(t_R))=R$.

By \cref{lem:entropy-upper} and $E_d\geq0$,
\begin{align*}
F_\beta(W(0))-F_\beta(W(t_R))
&\leq
F_\beta(W(0))+\frac1\beta S_N(W(t_R))\\
&\leq
C_0+C_1\log(1+R)
\end{align*}
for constants independent of $R$.  Combining \cref{lem:radial-length,lem:length-energy} gives
\[
\sqrt N\left(R^{1/N}-\sigma_1(W(0))^{1/N}\right)
\leq
\sqrt{T_{\max}\bigl(C_0+C_1\log(1+R)\bigr)}.
\]
The left side grows like $R^{1/N}$ and the right side like $\sqrt{\log R}$, a contradiction.
\end{proof}

\subsection{No finite-time collision with the rank boundary}

\begin{lemma}[A lower entropy bound on trajectories]\label{lem:entropy-lower-trajectory}
Along every solution,
\begin{equation*}
S_N(W(t))\geq-\beta F_\beta(W(0)).
\end{equation*}
\end{lemma}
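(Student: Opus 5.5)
The plan is to read the bound directly off the Lyapunov identity \eqref{eq:Lyapunov} together with the sign of the energy. Since $F_\beta$ is nonincreasing along the flow, for every $t$ in the interval of existence we have
\[
E_d(W(t))-\frac1\beta S_N(W(t))=F_\beta(W(t))\leq F_\beta(W(0)).
\]
Rearranging gives
\[
\frac1\beta S_N(W(t))\geq E_d(W(t))-F_\beta(W(0))\geq -F_\beta(W(0)),
\]
where the last inequality uses $E_d\geq0$, which is immediate because $E_d$ is a sum of squares. Multiplying by $\beta>0$ yields the claim $S_N(W(t))\geq-\beta F_\beta(W(0))$.

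There is no genuine obstacle. The only point to check is that \eqref{eq:Lyapunov} holds on the whole maximal interval of existence in $\GLd$. This is true because the vector field is smooth there, so $t\mapsto F_\beta(W(t))$ is $C^1$ with nonpositive derivative. No boundedness of the trajectory is needed.

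The role of the lemma in what follows is that this uniform lower entropy bound, combined with the finite-time norm bound of \cref{prop:no-finite-infinity}, keeps trajectories away from corank at least two in finite time. That uses the pair-term estimate from \cref{prop:corank-two}. The corank-one case would then be handled separately through the drift formula \eqref{eq:sigma-flow}.
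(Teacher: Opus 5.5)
Your proof is correct and matches the paper's own argument: monotonicity of $F_\beta$ from the Lyapunov identity \eqref{eq:Lyapunov}, combined with $E_d\geq0$, rearranged as $S_N(W(t))=\beta\bigl(E_d(W(t))-F_\beta(W(t))\bigr)\geq-\beta F_\beta(W(0))$. Your check that the identity holds on the whole maximal interval of existence, with no boundedness assumption on the trajectory, is correct and appropriate.
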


\begin{proof}
Since $F_\beta(W(t))\leq F_\beta(W(0))$ and $E_d\geq0$,
\[
S_N(W(t))
=\beta\bigl(E_d(W(t))-F_\beta(W(t))\bigr)
\geq-\beta F_\beta(W(0)).
\]
\end{proof}

\begin{lemma}[Two small singular values are impossible on a bounded sublevel]\label{lem:two-small}
Fix $M,L>0$.  There exists $\delta>0$ such that
\[
\sigma_1(W)\leq M,
\qquad
S_N(W)\geq-L
\]
imply
\[
\sigma_{d-1}(W)\geq\delta.
\]
\end{lemma}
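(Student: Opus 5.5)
The plan is to bound $S_N(W)$ from above by a quantity that tends to $-\infty$ as $\sigma_{d-1}(W)\to0$, uniformly under the constraint $\sigma_1(W)\leq M$, and then pick $\delta$ so that this bound falls below $-L$. The argument is a quantitative version of \cref{prop:entropy-boundary-general}\textup{(i)}, organized pair by pair through \cref{def:qN}.

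Order the singular values as $\sigma_1\geq\cdots\geq\sigma_d>0$ and write
\[
S_N(W)=\sum_{i<j}q_N(\sigma_i,\sigma_j).
\]
First, for every pair $i<j$, the ratio representation \eqref{eq:q-ratio} together with $0\leq h_N\leq\frac12\log N$ gives
\[
q_N(\sigma_i,\sigma_j)\leq \cN\log\sigma_i+\tfrac12\log N\leq \cN\log^+ M+\tfrac12\log N .
\]
This bound uses only $\sigma_i\leq\sigma_1\leq M$. Second, for the distinguished pair $(d-1,d)$ the same representation, with larger argument $\sigma_{d-1}$, gives
\[
q_N(\sigma_{d-1},\sigma_d)\leq \cN\log\sigma_{d-1}+\tfrac12\log N .
\]

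Summing the two estimates over all $\binom d2$ pairs yields
\[
S_N(W)\leq \cN\log\sigma_{d-1}+K,
\qquad
K=\left(\tbinom d2-1\right)\cN\log^+M+\tbinom d2\tfrac12\log N ,
\]
where $K$ depends only on $M$, $N$ and $d$. Combining this with the hypothesis $S_N(W)\geq-L$ gives
\[
\log\sigma_{d-1}\geq -\frac{L+K}{\cN},
\]
so one may take
\[
\delta=\exp\!\bigl(-(L+K)/\cN\bigr).
\]
Since $\cN=1-1/N>0$ for $N\geq2$, the division is legitimate and $\delta$ is well defined.

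There is no real obstacle. The only point requiring care is that the upper bound for each pair term depends only on its larger argument. This is exactly what \eqref{eq:q-ratio} provides, and it lets the bound hold uniformly however small $\sigma_d$ is. The only other check is that positive parts are used when $M<1$, so that the constant $K$ is correct in that case as well.
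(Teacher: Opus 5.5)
Your argument is correct and uses the same ingredients as the paper: the pair term $q_N(\sigma_{d-1},\sigma_d)$ is bounded above by $\cN\log\sigma_{d-1}+\frac12\log N$ via \eqref{eq:q-ratio}, and every other pair term is bounded above using $\sigma_1\leq M$. The only difference is that you argue directly and obtain the explicit value $\delta=\exp\bigl(-(L+K)/\cN\bigr)$, whereas the paper argues by contradiction along a sequence with $\sigma_{d-1}\to0$.
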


\begin{proof}
Otherwise there is a bounded sequence with $\sigma_{d-1}\to0$.  Then also $\sigma_d\to0$, and the pair term $q_N(\sigma_{d-1},\sigma_d)$ tends to $-\infty$ by \eqref{eq:q-ratio}.  All remaining pair terms are bounded above because $\sigma_1\leq M$.  Thus $S_N\to-\infty$, contradicting $S_N\geq-L$.
\end{proof}

\begin{proposition}[No finite-time rank loss]\label{prop:no-finite-rank-loss}
A solution of \eqref{eq:free-flow} cannot reach the rank boundary in finite time.
\end{proposition}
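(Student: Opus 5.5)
We argue by contradiction. Suppose a maximal solution $W(t)$ starting in $\GLd$ has finite terminal time $T_{\max}<\infty$. By \cref{prop:no-finite-infinity}, $\sigma_1(W(t))\leq M$ on $[0,T_{\max})$ for some $M<\infty$. By \cref{lem:entropy-lower-trajectory}, $S_N(W(t))\geq -L$ with $L=\beta F_\beta(W(0))$; if this is negative, replace $L$ by $|L|+1$. \cref{lem:two-small} then gives $\delta>0$ with $\sigma_{d-1}(W(t))\geq\delta$ for all $t<T_{\max}$. Since the vector field is smooth on $\GLd$, maximality with finite $T_{\max}$ forces the trajectory to leave every compact subset of $\GLd$. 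The only remaining way to do this is $\liminf_{t\to T_{\max}}\sigma_d(W(t))=0$. So it suffices to show that $\sigma_d$ is bounded below by a positive constant on $[0,T_{\max})$.

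For this we use \eqref{eq:sigma-flow} for the smallest singular value. The key point is that $\sigma_d$ is strictly separated from $\sigma_{d-1}\geq\delta$ whenever $\sigma_d<\delta/2$. On such time intervals $\sigma_d$ is therefore simple and $C^1$, and
\[
\dot\sigma_d=-N\sigma_d^{2-2/N}e_d+\frac N\beta\sigma_d^{2-2/N}g_d .
\]
By \cref{cor:g-positive} the entropy term is nonnegative, so we may drop it. Since $W$ is bounded, $|e_d|\leq\normF{E_d'(W)}\leq C$. This gives the differential inequality
\[
\dot\sigma_d\geq -NC\,\sigma_d^{2-2/N}\geq -NC\,\sigma_d,
\]
where the last step uses $\sigma_d\leq1$ and $2-2/N\geq1$.

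Grönwall's inequality on any interval where $\sigma_d<\min\{\delta/2,1\}$ then yields $\sigma_d(t)\geq\sigma_d(s)e^{-NC(t-s)}$, with $s$ the entry time into that region. At the entry time $\sigma_d(s)=\min\{\delta/2,1\}$, or $s=0$ and $\sigma_d(s)=\sigma_d(W(0))$. Hence on all of $[0,T_{\max})$,
\[
\sigma_d(t)\geq \min\{\delta/2,1,\sigma_d(W(0))\}\,e^{-NCT_{\max}}>0 .
\]
The trajectory therefore stays in the compact set $\{\sigma_1\leq M,\ \sigma_d\geq c\}\subset\GLd$, which contradicts maximality. The same bound also shows directly that no finite-time rank collision occurs. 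The argument does not even need $N>2$, since the entropy drift is only discarded.

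The main obstacle is the bookkeeping of the $\sigma_d$ estimate. We need that $\sigma_d$ is differentiable wherever the bound is applied, and this is exactly what \cref{lem:two-small} supplies through the gap $\sigma_{d-1}\geq\delta$. Without it a collision of $\sigma_{d-1}$ and $\sigma_d$ near zero would spoil the formula. One should also check that the entry-time argument handles repeated excursions into the small region; it does, because each excursion starts at the same threshold value. As an alternative one could use the corank-one drift of \cref{lem:boundary-derivative}, but the crude Grönwall bound is simpler and avoids needing convergence of the other singular values.
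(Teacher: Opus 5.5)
Your proof is correct and follows the paper's argument. It bounds the orbit on $[0,T_{\max})$, combines the trajectory entropy bound with \cref{lem:two-small} to keep $\sigma_{d-1}\geq\delta$, and then drops the positive entropy drift in \eqref{eq:sigma-flow} to get $\dot\sigma_d\geq -NC\sigma_d^{2-2/N}$. The only difference is that you linearize to $\dot\sigma_d\geq -NC\sigma_d$ and apply Gr\"onwall on the finite interval, with an entry-time argument that the paper's ``after some time $t_0$'' glosses over, whereas the paper integrates the power-law inequality explicitly using $2-2/N>1$.
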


\begin{proof}
Suppose a maximal solution remains bounded on $[0,T_{\max})$ with $T_{\max}<\infty$.  By \cref{lem:entropy-lower-trajectory,lem:two-small}, there is $m>0$ such that
\[
\sigma_{d-1}(W(t))\geq m
\]
throughout the interval.  If $\sigma_d$ were to approach zero, it would eventually be simple.  Since $g_d>0$, equation \eqref{eq:sigma-flow} gives
\[
\dot\sigma_d
\geq
-N|e_d|\sigma_d^{p},
\qquad
p=2-\frac2N>1.
\]
Boundedness of $W$ makes $|e_d|$ uniformly bounded by a constant $C$.  Hence, after some time $t_0$,
\[
\dot\sigma_d\geq-NC\sigma_d^p.
\]
Since $p>1$, this differential inequality yields
\[
\sigma_d(t)
\geq
\left(
\sigma_d(t_0)^{1-p}+NC(p-1)(t-t_0)
\right)^{-1/(p-1)}>0
\]
for every finite $t\geq t_0$.  Thus a positive singular value cannot reach zero in finite time.  The trajectory remains in a compact subset of $\GLd$, so the smooth ODE extends past $T_{\max}$, a contradiction.
\end{proof}

\begin{theorem}[Forward completeness]\label{thm:forward-complete}
Every solution of \eqref{eq:free-flow} with initial condition in $\GLd$ exists for all $t\geq0$ and remains full rank.  In particular, the sign of $\det W(t)$ is constant.
\end{theorem}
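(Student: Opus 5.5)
The plan is a standard continuation argument that combines \cref{prop:no-finite-infinity} and \cref{prop:no-finite-rank-loss}. Let $W_0\in\GLd$. The vector field $\mathcal X(W)=-\Aop_{N,W}(F_\beta'(W))$ is smooth on the open set $\GLd$, because $S_N$ is real analytic there and $\Aop_{N,W}$ depends smoothly on $W$ through the matrix powers of the positive definite matrices $WW^T$ and $W^TW$. Picard--Lindel\"of therefore gives a unique maximal solution on $[0,T_{\max})$ with values in $\GLd$. The standard escape lemma for ODEs on open subsets of $\Md$ says that if $T_{\max}<\infty$, then $W(t)$ eventually leaves every compact subset of $\GLd$. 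Concretely, either $\limsup_{t\uparrow T_{\max}}\normF{W(t)}=\infty$, or $W(t)$ stays bounded and $\liminf_{t\uparrow T_{\max}}\sigma_d(W(t))=0$.

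We rule out both alternatives under the assumption $T_{\max}<\infty$. The first is excluded by \cref{prop:no-finite-infinity}: the radial length bound of \cref{lem:radial-length} gives growth of order $R^{1/N}$, while the length-energy estimate of \cref{lem:length-energy}, combined with the logarithmic entropy bound of \cref{lem:entropy-upper}, gives only $\sqrt{\log R}$. Hence $\sup_{t<T_{\max}}\normF{W(t)}<\infty$.

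With the orbit bounded, \cref{prop:no-finite-rank-loss} applies. \Cref{lem:entropy-lower-trajectory} and \cref{lem:two-small} keep $\sigma_{d-1}$ bounded away from zero. The inequality $\dot\sigma_d\ge -NC\sigma_d^{p}$ with $p>1$ then gives an explicit positive lower bound for $\sigma_d$ on $[0,T_{\max})$. So the orbit stays in a compact subset of $\GLd$, contradicting maximality. Therefore $T_{\max}=\infty$, and $W(t)\in\GLd$ for all $t\ge0$.

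The one step that needs care is the use of \eqref{eq:sigma-flow} for $\sigma_d$, since that formula requires $\sigma_d$ to be simple. This is why we first establish the uniform gap $\sigma_{d-1}\ge m$. If $\sigma_d(t)\le m/2$ on an interval, then $\sigma_d$ is simple and smooth there, and the differential inequality applies. If instead $\sigma_d$ is bounded below by $m/2$ outside such intervals, then on each excursion below $m/2$ we start the comparison from the value $m/2$. This gives a uniform bound of the form $\sigma_d(t)\ge\bigl((m/2)^{1-p}+NC(p-1)T_{\max}\bigr)^{-1/(p-1)}$ on the whole interval.

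For the final claim, $t\mapsto\det W(t)$ is continuous on $[0,\infty)$ and never vanishes, because $W(t)\in\GLd$ for every $t$. By the intermediate value theorem its sign is constant, so the solution stays in the connected component of $\GLd$ containing $W_0$.
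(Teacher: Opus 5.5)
Your proposal is correct and follows the paper's proof: a finite maximal time would force either norm blow-up or approach to the rank boundary, which \cref{prop:no-finite-infinity} and \cref{prop:no-finite-rank-loss} exclude, and continuity of the nonvanishing determinant then preserves its sign. Your excursion argument for $\sigma_d$ usefully tightens the paper's ``eventually simple'' step. The comparison should start from $\min\{\sigma_d(0),m/2\}$ rather than $m/2$, or equivalently you can shrink $m$ so that $\sigma_d(0)\geq m/2$.
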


\begin{proof}
A finite maximal time would force either unbounded norm or approach to the boundary of $\GLd$.  These alternatives are excluded by \cref{prop:no-finite-infinity,prop:no-finite-rank-loss}.  Continuity of the nonzero determinant then preserves its sign.
\end{proof}

\section{Bounded orbits and generic unboundedness}\label{sec:generic}

\subsection{A bounded orbit is precompact in \texorpdfstring{$\GLd$}{GLd}}

\begin{proposition}[Uniform separation from the rank boundary]\label{prop:bounded-precompact}
If a forward orbit is bounded in $\Md$, then there is $\delta>0$ such that
\[
\sigma_d(W(t))\geq\delta
\qquad
\text{for all }t\geq0.
\]
Consequently, the orbit has compact closure in $\GLd$.
\end{proposition}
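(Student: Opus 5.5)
The plan is to combine the trajectory entropy bound with the corank-one repulsion mechanism, made uniform on the bounded orbit. Let $M=\sup_{t\ge0}\sigma_1(W(t))<\infty$. By \cref{lem:entropy-lower-trajectory}, $S_N(W(t))\ge -\beta F_\beta(W(0))=:-L$ for all $t$. Then \cref{lem:two-small} gives $m>0$ with $\sigma_{d-1}(W(t))\ge m$ for all $t\ge0$. Only the smallest singular value can therefore approach zero, and we must rule out $\liminf_{t\to\infty}\sigma_d(W(t))=0$.

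The key step is a uniform version of the drift estimate \eqref{eq:sigmad-repulsion}. On the compact set
\[
K=\{W:\ \sigma_1(W)\le M,\ \sigma_{d-1}(W)\ge m\},
\]
the ratios $a_j=\sigma_j\in[m,M]$ for $j<d$ stay in a compact interval. Hence \cref{lem:boundary-derivative} holds uniformly. Using the exact identity \eqref{eq:boundary-drift-exact}, summed over $j<d$, we get a uniform expansion
\[
N\sigma_d^{2-2/N}g_d=\Bigl(\sum_{j<d}\sigma_j^{-2/N}\Bigr)\sigma_d+o(\sigma_d)\ \ge\ (d-1)M^{-2/N}\sigma_d\bigl(1+o(1)\bigr),
\]
uniformly on $K$ as $\sigma_d\to0$. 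Also $|e_d|\le\normF{E_d'(W)}\le C$ on $K$. Substituting into \eqref{eq:sigma-flow} gives, whenever $\sigma_d<\eta$ for a suitable $\eta\in(0,m)$,
\[
\dot\sigma_d\ \ge\ \frac{(d-1)M^{-2/N}}{2\beta}\sigma_d-NC\sigma_d^{2-2/N}\ >\ 0,
\]
because $2-2/N>1$ when $N>2$. Shrinking $\eta$ makes the right side positive on $(0,\eta)$. Since $\eta<m\le\sigma_{d-1}$, the value $\sigma_d$ is simple there, so $\sigma_d(W(t))$ is differentiable at such times.

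A barrier argument then finishes the bound. Let $\delta=\min\{\eta/2,\sigma_d(W(0))\}$. If $\sigma_d$ ever dropped below $\delta$, take the last time before that at which $\sigma_d=\delta$. On the subsequent interval $\sigma_d<\eta$, so $\sigma_d$ is strictly increasing there, which is a contradiction. Thus $\sigma_d(W(t))\ge\delta$ for all $t$. The orbit then lies in the compact set $\{\sigma_1\le M,\ \sigma_d\ge\delta\}\subset\GLd$, so its closure is compact in $\GLd$.

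The main obstacle is the uniformity of the $o(\sigma_d)$ remainder over the whole orbit rather than along a convergent sequence, as in \cref{prop:corank-one}. It is handled by the explicit formula \eqref{eq:boundary-drift-exact}: since $a\ge m$, its second term is $O(\sigma_d^{3-2/N})$ and the first equals $\sigma_d a^{-2/N}(1+O(\sigma_d^{2/N}))$, both with constants depending only on $m,M,N$. This is the step I would verify carefully first.
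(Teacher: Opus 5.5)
Your proposal is correct and follows essentially the same route as the paper. You use \cref{lem:entropy-lower-trajectory,lem:two-small} to keep $\sigma_{d-1}\geq m$, then the exact identity \eqref{eq:boundary-drift-exact} to obtain the uniform outward drift $\dot\sigma_d\geq c\,\sigma_d$ for small $\sigma_d$ (exactly the estimate recorded in Appendix~\ref{app:uniform-repulsion}), and finally a barrier argument with lower bound $\min\{\eta/2,\sigma_d(W(0))\}$, which is a slightly more explicit version of the paper's ``cannot cross the level $b=\delta$ downward'' step.
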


\begin{proof}
Let $\sigma_1(W(t))\leq M$.  By \cref{lem:entropy-lower-trajectory,lem:two-small},
\[
\sigma_{d-1}(W(t))\geq m>0.
\]
It remains to control $b=\sigma_d$.

When $0<b<m/2$, the smallest singular value is simple.  For $a_j=\sigma_j\in[m,M]$, use the exact formula \eqref{eq:boundary-drift-exact} in \eqref{eq:sigma-flow}.  Uniformly as $b\downarrow0$,
\begin{equation*}
\frac N\beta b^{2-2/N}g_d
=
\frac1\beta\left(\sum_{j<d}a_j^{-2/N}\right)b+o(b).
\end{equation*}
The energy contribution is bounded below by
\[
-NC b^{2-2/N}=o(b).
\]
Thus there are $\delta\in(0,m/2)$ and $c>0$ such that
\begin{equation}\label{eq:outward-barrier}
0<b\leq\delta
\quad\Longrightarrow\quad
\dot b\geq cb>0.
\end{equation}
A trajectory cannot cross the level $b=\delta$ downward, and if it starts below $\delta$ it initially moves upward.  Hence $b$ has a positive lower bound depending on the initial point.  Together with boundedness of $\sigma_1$, this gives compact closure in $\GLd$.
\end{proof}

\subsection{Convergence of every bounded orbit}

We use only elementary properties of omega-limit sets and the Lyapunov function.

\begin{lemma}[Bounded orbits converge to equilibria]\label{lem:bounded-converges}
Every bounded forward orbit converges to one of the $2^d$ critical points $W_\epsilon$.
\end{lemma}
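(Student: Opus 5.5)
By \cref{prop:bounded-precompact}, the closure $K$ of a bounded forward orbit is a compact subset of $\GLd$. We will use the standard Lyapunov/LaSalle argument on $K$, followed by the fact that the equilibria are isolated.

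First, $F_\beta$ is continuous on $K$ and nonincreasing along the flow by \eqref{eq:Lyapunov}. Hence $F_\beta(W(t))$ decreases to a finite limit $F_\infty$. The omega-limit set $\omega$ is nonempty, compact, connected, and invariant, and it lies in $K\subset\GLd$. Because the flow is smooth on $\GLd$ and forward complete (\cref{thm:forward-complete}), invariance makes sense. By continuity, $F_\beta\equiv F_\infty$ on $\omega$.

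Next, take any $Z\in\omega$. The solution starting at $Z$ stays in $\omega$, so $F_\beta$ is constant along it. The identity \eqref{eq:Lyapunov} then forces $\normg{\grad_{g^N}F_\beta}=0$ along that solution. Since $\Aop_{N,W}$ is positive definite on $\GLd$, this gives $F_\beta'(Z)=0$. Therefore $\omega\subset\Crit_{\GLd}(F_\beta)$.

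By \cref{thm:2d-critical}, the critical set consists of exactly the $2^d$ points $W_\epsilon$, so it is finite and discrete. A connected subset of a finite set is a single point, so $\omega=\{W_\epsilon\}$ for some $\epsilon$. A precompact orbit whose omega-limit set is a single point converges to that point: otherwise some subsequence would stay a fixed distance away, and compactness would produce a second limit point. Hence $W(t)\to W_\epsilon$.

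The only delicate step is ensuring $\omega\subset\GLd$ rather than merely $\omega\subset\Md$. This is exactly what \cref{prop:bounded-precompact} supplies, and it relies on $N>2$ through the corank-one repulsion. Given that, the argument is routine and does not even need hyperbolicity (\cref{cor:hyperbolic}), only finiteness of the critical set.
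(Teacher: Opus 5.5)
Your proposal is correct and follows the paper's proof essentially step for step. Both use precompactness in $\GLd$ from \cref{prop:bounded-precompact}, then LaSalle via the dissipation identity \eqref{eq:Lyapunov} to place the connected omega-limit set inside the finite critical set of \cref{thm:2d-critical}, and conclude from the singleton omega-limit set, with no use of hyperbolicity, exactly as the paper does.
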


\begin{proof}
By \cref{prop:bounded-precompact}, the orbit has compact closure in $\GLd$.  Its omega-limit set
\[
\omega(W_0)
=
\bigcap_{T>0}\overline{\{W(t):t\geq T\}}
\]
is nonempty, compact, invariant, and connected.  Connectedness follows because it is the nested intersection of closures of connected tails of the trajectory.

The decreasing function $F_\beta(W(t))$ has a finite limit on the compact orbit closure.  It is therefore constant on $\omega(W_0)$.  Invariance and the dissipation identity \eqref{eq:Lyapunov} imply
\[
\grad_{g^N}F_\beta=0
\]
at every point of the omega-limit set.  Thus
\[
\omega(W_0)\subset\Crit(F_\beta).
\]
By \cref{thm:2d-critical}, the critical set is finite.  A connected subset of a finite set is a singleton, say $\omega(W_0)=\{W_\epsilon\}$.  A precompact trajectory with a singleton omega-limit set converges to that point.
\end{proof}

\subsection{Measure-zero bounded dynamics}

\begin{theorem}[Generic unboundedness]\label{thm:generic-escape}
The set
\[
\mathcal B
=
\left\{W_0\in\GLd:
\sup_{t\geq0}\normF{W(t;W_0)}<\infty
\right\}
\]
has Lebesgue measure zero.  Hence almost every full-rank initial condition has an unbounded forward orbit.
\end{theorem}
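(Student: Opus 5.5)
By \cref{lem:bounded-converges}, every $W_0\in\mathcal B$ has a forward orbit converging to one of the $2^d$ equilibria $W_\epsilon$. Writing $W^s(W_\epsilon)=\{W_0\in\GLd: W(t;W_0)\to W_\epsilon\}$ for the global stable set, this gives
\[
\mathcal B\subset\bigcup_{\epsilon\in\{-1,1\}^d}W^s(W_\epsilon).
\]
The inclusion is in fact an equality, since convergent orbits are bounded. It therefore suffices to show that each global stable set has Lebesgue measure zero in $\Md$.

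\textbf{Step 1: local stable manifold.} By \cref{cor:hyperbolic}, each $W_\epsilon$ is a hyperbolic equilibrium of the smooth vector field $\mathcal X$ on the open set $\GLd$, and it has $\dim E^u=\binom d2\geq1$ because $d\geq2$. The stable manifold theorem then gives a neighborhood $U_\epsilon$ of $W_\epsilon$ and an embedded $C^1$ local stable manifold $W^s_{\mathrm{loc}}(W_\epsilon)\subset U_\epsilon$ of dimension $d(d+1)/2<d^2$. Any orbit that converges to $W_\epsilon$ eventually stays in $U_\epsilon$, and the theorem guarantees that from that time on it lies in $W^s_{\mathrm{loc}}(W_\epsilon)$. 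Being a submanifold of positive codimension, $W^s_{\mathrm{loc}}(W_\epsilon)$ has Lebesgue measure zero.

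\textbf{Step 2: globalization.} Let $\phi_t$ denote the flow. By \cref{thm:forward-complete}, $\phi_t$ is defined on all of $\GLd$ for $t\geq0$. Step 1 yields
\[
W^s(W_\epsilon)=\bigcup_{n\in\mathbb N}\phi_n^{-1}\bigl(W^s_{\mathrm{loc}}(W_\epsilon)\bigr),
\]
so $W^s(W_\epsilon)$ is a countable union of preimages of a null set. For each $n$, the map $\phi_n:\GLd\to\GLd$ is a $C^1$ diffeomorphism onto its image, with inverse given by the backward flow on that open image. Consequently $\phi_n^{-1}(Z)$ equals $\phi_{-n}(Z\cap\phi_n(\GLd))$, the image of a null set under a $C^1$ map between open subsets of $\Md$. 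Such images are null, since $C^1$ maps are locally Lipschitz.

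The only point that needs care is this backward-flow step. We do not need backward completeness: the backward flow only has to be defined on $\phi_n(\GLd)$, and there it is defined by construction. Combining the two steps over the countably many $n$ and the $2^d$ sign vectors shows that $\mathcal B$ is null. Since $\Md\setminus\GLd$ is also null, it follows that Lebesgue-almost every $W_0\in\GLd$ has $\sup_{t\geq0}\normF{W(t)}=\infty$.
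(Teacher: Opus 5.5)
Your proof is correct and follows the paper's route: you reduce $\mathcal B$ to the union of the $2^d$ global stable sets via \cref{lem:bounded-converges}, and you use hyperbolicity (\cref{cor:hyperbolic}) with the stable-manifold theorem to get null local stable manifolds of dimension $d(d+1)/2<d^2$. Your Step 2 writes $W^s(W_\epsilon)=\bigcup_n\phi_n^{-1}\bigl(W^s_{\mathrm{loc}}(W_\epsilon)\bigr)$ and uses forward completeness together with the fact that $\phi_n$ is a diffeomorphism onto an open image; this makes explicit the passage from the null local manifold to the null global stable set, which the paper's proof leaves implicit in ``the result follows.''
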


\begin{proof}
By \cref{lem:bounded-converges},
\[
\mathcal B\subset
\bigcup_{\epsilon\in\{-1,1\}^d}W^s(W_\epsilon),
\]
where $W^s(W_\epsilon)$ is the global stable set.  By \cref{cor:hyperbolic} and the stable-manifold theorem \cite{HirschPughShub1977}, the local stable manifold of $W_\epsilon$ is an embedded submanifold of dimension
\[
\frac{d(d+1)}2<d^2.
\]
It therefore has Lebesgue measure zero. The result follows.
\end{proof}

\begin{corollary}[Absolutely continuous random initialization]\label{cor:random-init}
Let $W_0$ have a distribution absolutely continuous with respect to Lebesgue measure on $\Md$.  Conditional on $W_0\in\GLd$, the free-energy orbit is unbounded with probability one.  In particular, if $G$ has independent standard Gaussian entries and $\eps>0$, then
\[
\mathbb P\left(
\sup_{t\geq0}\normF{W(t;\eps G)}=\infty
\right)=1.
\]
\end{corollary}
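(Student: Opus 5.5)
This corollary is a direct probabilistic restatement of \cref{thm:generic-escape}. Let $\mu$ be the law of $W_0$, absolutely continuous with respect to Lebesgue measure $\lambda$ on $\Md\cong\mathbb R^{d^2}$.

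First, the singular set $\{\det W=0\}$ is the zero set of a nonzero polynomial, so it is $\lambda$-null. Hence $\mu(\GLd)=1$, and conditioning on $W_0\in\GLd$ is harmless. By \cref{thm:forward-complete}, the orbit $W(t;W_0)$ is defined for all $t\ge0$ for every $W_0\in\GLd$, so the event in question is well defined.

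Next, the event of a bounded orbit is exactly $\{W_0\in\mathcal B\}$. By \cref{thm:generic-escape}, $\lambda(\mathcal B)=0$, and absolute continuity gives $\mu(\mathcal B)=0$. Therefore
\[
\mathbb P\bigl(\sup_{t\ge0}\normF{W(t;W_0)}=\infty \,\big|\, W_0\in\GLd\bigr)=\frac{\mu(\GLd\setminus\mathcal B)}{\mu(\GLd)}=1 .
\]

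There are two minor points to settle.

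\begin{enumerate}[label=\textup{(\roman*)}]
\item \emph{Measurability of $\mathcal B$.} By continuity in $t$, one can write
\[
\mathcal B=\bigcup_{M\in\mathbb N}\bigcap_{t\in\mathbb Q_{\ge0}}\{W_0:\normF{W(t;W_0)}\le M\}.
\]
Each set in this expression is closed in $\GLd$ by continuous dependence on initial data, so $\mathcal B$ is Borel. If one wants to avoid this entirely, it is enough that $\mathcal B$ is contained in a null set: it lies in the countable union of the global stable sets $\bigcup_{n}\phi_{-n}(W^s_{\mathrm{loc}})$, which are images of null sets under smooth maps.

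\item \emph{The Gaussian case.} The law of $\eps G$ has a positive Gaussian density on $\mathbb R^{d^2}$, so the general argument applies. Since $\eps G\in\GLd$ almost surely, the conditioning can be dropped, which gives the unconditional probability-one statement.
\end{enumerate}

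The only step that is not pure bookkeeping is the measure-zero claim for the global stable set in \cref{thm:generic-escape}. That argument needs the flow maps $\phi_{-n}$ to be diffeomorphisms on their domains, so that the preimages of the local stable manifolds remain null. This is taken as given from that theorem.
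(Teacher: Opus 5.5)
Your proposal is correct and follows the paper's argument. Absolute continuity transfers the Lebesgue-null exceptional set $\mathcal B$ of \cref{thm:generic-escape} to a probability-null set, and $\eps G$ is almost surely full rank, so the conditioning drops out in the Gaussian case; your extra remarks on the Borel measurability of $\mathcal B$ and on pulling back the local stable manifolds by the flow maps fill in bookkeeping the paper leaves implicit, and both are sound.
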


\begin{proof}
A Gaussian matrix is full rank almost surely, and the exceptional set in \cref{thm:generic-escape} is Lebesgue null.
\end{proof}

\section{The zero-temperature comparison in width two}\label{sec:zero-temperature}

The results above concern the finite-temperature free-energy flow.  We now
compare them with the ordinary deep-linear training flow, obtained formally by
setting $\beta=\infty$.  This comparison is useful for two reasons.  First, it
clarifies the sense in which numerical trajectories can display low-rank
selection even though a full-rank trajectory does not converge to a nonzero
rank-one completion at any fixed positive initialization scale.  Second, it
gives explicit scaled families $W_0^{\eps}=\eps\widehat W_0$ for which every
terminal matrix is full rank but its smaller singular value tends to zero at a
computed rate as $\eps\to0$, while every fixed finite-temperature problem has
generic unbounded dynamics.

Throughout this section $d=2$ and
\[
E_2(W)=\frac12\bigl((W_{11}-1)^2+(W_{22}-1)^2\bigr).
\]
The unregularized reduced balanced flow is
\begin{equation}\label{eq:zero-temp-flow}
\dot W=-\Aop_{N,W}\bigl(E_2'(W)\bigr).
\end{equation}
Its zero-loss set and its rank-one part are
\[
\mathcal Z
=
\left\{
\begin{pmatrix}1&a\\ b&1\end{pmatrix}:a,b\in\mathbb R
\right\},
\qquad
\mathcal H
=
\left\{
\begin{pmatrix}1&\gamma\\ \gamma^{-1}&1\end{pmatrix}:
\gamma\neq0
\right\}.
\]
We write
\[
J=\begin{pmatrix}1&1\\1&1\end{pmatrix},
\qquad
D=\diag(1,-1),
\qquad
DJD=\begin{pmatrix}1&-1\\-1&1\end{pmatrix}
\]
for the two symmetric points of $\mathcal H$.  Let
\[
X_0^N(W)=-\Aop_{N,W}\bigl(E_2'(W)\bigr)
\]
and let $\mathcal T_t^N(W_0)$ denote the solution of
\eqref{eq:zero-temp-flow} with initial condition $W_0$.  Whenever the forward
limit exists, we define the \emph{endpoint map}
\[
\mathcal E_N(W_0)=\lim_{t\to\infty}\mathcal T_t^N(W_0)
\]
 and $\mathcal{E}_N(W_0)$ is the \emph{endpoint} of $W_0$. We also write $W_\infty=\mathcal E_N(W_0)$ when the initial condition is
clear.  Since $\Aop_{N,0}=0$ for $N\geq2$, the origin is an equilibrium and
$\mathcal E_N(0)=0$.  For a fixed matrix $\widehat W_0$, a family
$W_0^{\eps}=\eps\widehat W_0$ will be called a \emph{scaled initialization
family}, and $\eps$ its initialization scale.  Every point of $\mathcal Z$
is an equilibrium of \eqref{eq:zero-temp-flow}.

\subsection{Attraction to the completion manifold and exact rank-one limits}

The completion manifold is normally attracting at each of its full-rank
points.  The rank-one subset has a different status: it is seen in the limit
of small initialization, but it cannot be the exact limit of a trajectory
that starts full rank.

\begin{proposition}[Completion attraction and exclusion of exact rank-one
limits]\label{prop:zero-temp-rank-one}
The following statements hold for every $N\geq2$.
\begin{enumerate}[label=\textup{(\roman*)}]
\item The equilibrium manifold $\mathcal Z\cap\mathrm{GL}_2(\mathbb R)$ is
normally attracting at every point, and its normal contraction is uniform on
compact subsets.
\item If $W_0$ is full rank, then $\mathcal T_t^N(W_0)$ cannot converge to a
point of $\mathcal H$ as $t\to\infty$.
\end{enumerate}
Equivalently, whenever $W_0\in\mathrm{GL}_2(\mathbb R)$ has $\mathcal{E}_N(W_0)\in\mathcal Z$,
\[
\sigma_2\bigl(\mathcal E_N(W_0)\bigr)>0.
\]
\end{proposition}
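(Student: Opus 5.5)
Part (i) is a linearization computation along the equilibrium manifold. At $W_*\in\mathcal Z\cap\mathrm{GL}_2(\mathbb R)$ we have $E_2'(W_*)=0$, so, as in \eqref{eq:linearization}, the derivative of the vector field is $DX_0^N(W_*)=-\Aop_{N,W_*}\circ D^2E_2(W_*)$. Here $D^2E_2$ is the orthogonal projection $\Pi$ onto diagonal matrices, whose kernel $\operatorname{span}\{E_{12},E_{21}\}$ is exactly $T_{W_*}\mathcal Z$. The operator $-\Aop\Pi$ therefore vanishes on the tangent space. By the similarity argument of \cref{cor:hyperbolic}, its nonzero eigenvalues are the eigenvalues of $-\Pi\Aop\Pi$ restricted to $\mathcal D$. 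That compression is negative definite because $\Aop_{N,W_*}$ is positive definite on $\mathrm{GL}_2$. Hence the spectrum is $\{0,0\}$ on $T\mathcal Z$ together with two strictly negative eigenvalues on a complementary invariant subspace. The eigenvalues depend continuously on $W_*$ through the singular values, with $a_{ij}(\sigma)$ bounded below on compact subsets of $\mathrm{GL}_2$. This gives a uniform normal contraction rate on compacts, and the standard normally hyperbolic (attracting) invariant manifold theory \cite{HirschPughShub1977} yields (i).

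For (ii), I would mimic the corank-one repulsion argument of \cref{prop:corank-one}, now with $\beta=\infty$, and exploit the rate mismatch. Suppose $W(t)\to W_*\in\mathcal H$. Then $\sigma_1\to\tau>0$, $\sigma_2=b\to0$, and for large $t$ the smallest singular value is simple. Equation \eqref{eq:sigma-flow} without entropy gives
\[
\dot b=-N b^{2-2/N}e_2,\qquad e_2=u_2^TE_2'(W)v_2.
\]
Since $E_2'(W)\to E_2'(W_*)=0$, we get $|e_2|\le C\normF{W-W_*}$. This alone does not contradict convergence, so the key is to compare how fast $W\to W_*$ with how fast $b$ must decay. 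I expect this comparison to be the main obstacle.

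Near $W_*$, the diagonal residual $P=E_2'(W)$ satisfies $\dot P=-\Pi\Aop_{N,W}(P)$. At $W_*$ one has $\sigma_2=0$, so $\Aop_{N,W_*}$ degenerates only in the $u_2v_2^T$ direction (coefficient $a_{22}=Nb^{2-2/N}\to0$). The mixed coefficients $a_{12}$ stay near $\tau^{2-2/N}$, and $a_{11}\to N\tau^{2-2/N}$. The diagonal subspace is two-dimensional while only one frame direction degenerates, so I would first try to show that one component of $P$ decays exponentially. The other component is governed by the degenerate direction, and I would aim for a bound of the form $\normF{P}\lesssim b^{\theta}$, or at least $\int^\infty |e_2|\,dt<\infty$ with $e_2$ controlled by the rank-one projection. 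Granting such a bound, $\dot b/b\ge -C|e_2|b^{1-2/N}$ is integrable in $t$, so $\log b$ stays bounded below. This contradicts $b\to0$ and proves (ii).

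The cleanest route to integrability is probably a Łojasiewicz-type argument. $E_2$ is a real-analytic function vanishing on $\mathcal Z$, and the trajectory has finite length in the Euclidean sense, because convergence to $W_*$ plus gradient structure gives $\int\|\dot W\|<\infty$ (by Łojasiewicz applied to the analytic $E_2$ with the metric comparable near $W_*$ except in one degenerate direction, where care is needed). Then $\int|\dot b|\,dt<\infty$ is automatic, but that does not yet prevent $b\to0$. So I would instead use the multiplicative estimate $|d\log b/dt|\le Nb^{1-2/N}|e_2|$ and bound $|e_2|\le C\|\dot W\|/\lambda$, where $\lambda$ is the nondegenerate normal rate. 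Finally, the equivalent formulation in the statement follows: if the endpoint lies in $\mathcal Z$ it is either full rank or in $\mathcal H$, and the latter is excluded.
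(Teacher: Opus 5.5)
Your part (i) is the paper's argument. So is the last step of (ii): once $\int^\infty|e_2|\,dt<\infty$, the equation $\dot\sigma_2=-N\sigma_2^{2-2/N}e_2$ keeps $\log\sigma_2$ (or $\sigma_2^{-p_N}$ when $N>2$) bounded, which contradicts $\sigma_2\to0$.

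The gap is that you never prove this integrability. You write ``granting such a bound'', and the \L{}ojasiewicz/finite-length sketch rests on inequalities you do not derive. Finite Euclidean length is not automatic here, because $g^N$ degenerates at $W_*$, as you yourself flag. The bound $|e_2|\le C\normF{\dot W}/\lambda$ is also unproved: since $\dot W=-\Aop_{N,W}(P)$ and $|e_2|\le\normF{P}$, it amounts to $\normF{\Aop_{N,W}(P)}\ge\lambda\normF{P}$ for diagonal $P$ near $W_*$. Worse, your heuristic points the wrong way. You expect one diagonal component of the residual to be ``governed by the degenerate direction'' and possibly to decay slowly.

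The missing idea is that the rank-one degeneracy of $\Aop_{N,W}$ is invisible to the diagonal residual. Because $E_2'(W)$ is diagonal, $p=(W_{11}-1,W_{22}-1)^T$ satisfies the closed linear system $\dot p=-M_N(W(t))p$, where $(M_N(W))_{ij}=\ip{E_{ii}}{\Aop_{N,W}(E_{jj})}$ is the compression of $\Aop_{N,W}$ to $\mathcal D$. At $W_\gamma=\sigma_\gamma uv^T$ one has $\Aop_{N,W_\gamma}(P)=\sigma_\gamma^{2-2/N}(uu^TP+Pvv^T+(N-2)uu^TPvv^T)$. This operator is positive semidefinite, and its kernel is spanned by $u_2v_2^T\propto\bigl(\begin{smallmatrix}1&-\gamma^{-1}\\ -\gamma&1\end{smallmatrix}\bigr)$, which is not diagonal. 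Hence the kernel misses $\mathcal D$ and the compression stays positive definite. Explicitly, $M_N(W_\gamma)=\sigma_\gamma^{2-2/N}\bigl(\begin{smallmatrix}1+\eta_\gamma&\eta_\gamma\\ \eta_\gamma&1+\eta_\gamma\end{smallmatrix}\bigr)$ with $\eta_\gamma=(N-2)/\sigma_\gamma^2$, whose eigenvalues $\sigma_\gamma^{2-2/N}$ and $\sigma_\gamma^{2-2/N}(1+2\eta_\gamma)$ are positive. By continuity both components of $p$ decay exponentially, so $|e_2|\le\|p\|\le Ce^{-ct}$ and your final step applies.

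This positive definiteness is exactly what your unproved inequalities require; the gradient inequality in $g^N$ is precisely $p^TM_N(W)p\ge c\|p\|^2$. Once you have it, neither \L{}ojasiewicz nor finite length is needed. It is also the crux rather than a technicality. In the block-observation setting of \cref{rem:SY-block-collapse}, the kernel direction lies inside the observed subspace, the compression degenerates, and exact rank collapse does occur.
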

Thus, if a family of full-rank initial conditions $W_0^{\eps}$ has endpoints in
$\mathcal Z$ converging to a matrix in $\mathcal H$ (as $\varepsilon\rightarrow 0$), then every endpoint remains full rank
but necessarily
\[
\sigma_2\bigl(\mathcal E_N(W_0^{\eps})\bigr)\longrightarrow0.
\]
The results below compute this decay for explicit scaled initialization families.
\begin{proof}
Let $\Pi:\mathbb M_2\to\mathbb M_2$ denote projection onto the diagonal
entries.  At $W_*\in\mathcal Z$, the residual $E_2'(W_*)$ vanishes, and hence
the derivative of the vector field in \eqref{eq:zero-temp-flow} is
\[
DX_0^N(W_*)[H]
=-\Aop_{N,W_*}\bigl(\Pi H\bigr).
\]
If $W_*$ is full rank, $\Aop_{N,W_*}$ is positive definite.  The operator
$-\Aop_{N,W_*}\Pi$ is similar to the negative semidefinite self-adjoint
operator
\[
-\Aop_{N,W_*}^{1/2}\Pi\Aop_{N,W_*}^{1/2}.
\]
Its kernel has dimension two and corresponds to $T_{W_*}\mathcal Z$, while
its two remaining eigenvalues are strictly negative.  The spectral gap is
uniform on compact subsets of $\mathcal Z\cap\mathrm{GL}_2(\mathbb R)$,
which proves part~\textup{(i)}.

For part~\textup{(ii)}, suppose that a full-rank trajectory
converges to
\[
W_\gamma=
\begin{pmatrix}1&\gamma\\ \gamma^{-1}&1\end{pmatrix}
\in\mathcal H.
\]
Write
\[
p(t)=
\begin{pmatrix}W_{11}(t)-1\\ W_{22}(t)-1\end{pmatrix}.
\]
Because the energy gradient is diagonal, the residual satisfies an exact
linear equation with time-dependent coefficients,
\[
\dot p(t)=-M_N(W(t))p(t),
\qquad
(M_N(W))_{ij}
=\left\langle E_{ii},\Aop_{N,W}(E_{jj})\right\rangle_F.
\]
Here $E_{ij}$ denotes the standard matrix unit.  Put
\[
\sigma_\gamma=\left|\gamma+\gamma^{-1}\right|,
\qquad
\alpha_N=2-\frac2N,
\qquad
\eta_\gamma=\frac{N-2}{\sigma_\gamma^2}.
\]
If $W=\sigma uv^T$ has rank one and $P\in\mathbb M_2$, direct evaluation of
the defining sum for $\Aop_{N,W}$ gives
\[
\Aop_{N,W}(P)
=
\sigma^{2-2/N}
\left(
 uu^TP+Pvv^T+(N-2)uu^TPvv^T
\right).
\]
Applying this formula at $W_\gamma$ gives
\[
M_N(W_\gamma)
=
\sigma_\gamma^{\alpha_N}
\begin{pmatrix}
1+\eta_\gamma&\eta_\gamma\\
\eta_\gamma&1+\eta_\gamma
\end{pmatrix}.
\]
Its eigenvalues are
\[
\sigma_\gamma^{\alpha_N},
\qquad
\sigma_\gamma^{\alpha_N}(1+2\eta_\gamma),
\]
and are strictly positive.  Continuity therefore gives constants $c,C>0$
such that
\[
\|p(t)\|\leq Ce^{-ct}
\]
for all sufficiently large $t$.

Let $\sigma_2(t)>0$ be the smaller singular value of $W(t)$.  For large $t$
it is simple; let $u_2(t)$ and $v_2(t)$ be associated unit left and right
singular vectors.  The singular-value equation associated with
\eqref{eq:zero-temp-flow} is
\[
\dot\sigma_2
=-N\sigma_2^{2-2/N}e_2(t),
\qquad
e_2(t)=u_2(t)^TE_2'(W(t))v_2(t).
\]
The preceding exponential estimate implies
\[
|e_2(t)|\leq\|E_2'(W(t))\|_F\leq Ce^{-ct},
\]
so $e_2$ is integrable on a terminal time interval.  If $N=2$, then
\[
\frac{d}{dt}\log\sigma_2(t)=-2e_2(t),
\]
which implies for any $T>0$ that
$$\sigma_2(T) = \sigma_2(0)\exp\left(-2\int_0^T e_2(s)\, ds\right) \geq \sigma_2(0)\exp\left(-2\int_0^\infty e_2(s)\, ds\right)>0,$$
so $\sigma_2(t)$ has a positive limit.  If $N>2$, set
$p_N=(N-2)/N$.  Then
\[
\frac{d}{dt}\sigma_2(t)^{-p_N}=(N-2)e_2(t),
\]
and so 
$$\sigma_2(T)^{-p_N} = \sigma_2(0)^{-p_N} + (N-2)\int_0^T e_2(s)\, ds$$
remains bounded for all $T>0$. Thus $\sigma_2(t)^{-p_N}$ has a finite limit and $\sigma_2(t)$ again cannot
tend to zero.  Both conclusions contradict $W(t)\to W_\gamma$.
\end{proof}

\begin{remark}
The first part of the proposition concerns attraction to the two-dimensional
completion manifold $\mathcal Z$, not attraction to the one-dimensional
hyperbola $\mathcal H$ in the full four-dimensional phase space.  Inside the
rank-one stratum, the diagonal-residual map has rank two and the same positive
normal matrix computed in the proof, so $\mathcal H$ is locally normally
attracting there.  In the full matrix space, however, the additional direction
tangent to $\mathcal Z$ and transverse to $\mathcal H$ is neutral.
\end{remark}

\begin{remark}[Exact rank collapse under block observations]\label{rem:SY-block-collapse}
Part~\textup{(ii)} is a statement about the diagonal observation pattern, not a general rank rigidity of the reduced flow.  For block-diagonal observations with block size $s\geq2$ and $n$ blocks, Shin and Yun prove that the balanced factor flow started from a full-rank deterministic initialization converges to a zero-loss limit whose singular values vanish exactly beyond the $n$th \cite[Theorem 3.3]{ShinYun2026}.  There is no conflict.  In the block case the collapsing directions are the within-block traceless modes, which are observed; on the invariant matrix family used there, the residual component in such a direction equals the vanishing eigenvalue itself, so the singular-value equation degenerates to $\dot\lambda=-N\lambda^{3-2/N}$ and the decay is only algebraic.  The uniform spectral gap of $M_N(W_\gamma)$, which produces the exponential residual decay driving the proof of part~\textup{(ii)}, is therefore absent.  Whether infinite-time rank collapse can occur is thus governed by whether the collapsing directions are seen by the observation pattern.
\end{remark}

\subsection{Exact singular-value and angular coordinates}

We next derive an exact four-dimensional coordinate system in the
positive-determinant component.  Let
\[
R(\vartheta)=
\begin{pmatrix}
\cos\vartheta&-\sin\vartheta\\
\sin\vartheta&\cos\vartheta
\end{pmatrix},
\]
and write, locally away from repeated singular values,
\[
W
=
R(\theta+\delta)
\begin{pmatrix}s&0\\0&t\end{pmatrix}
R(\theta-\delta)^T,
\qquad s>t>0.
\]
Set
\[
u=\frac{s+t}{2},\qquad v=\frac{s-t}{2}.
\]
Then
\begin{equation}\label{eq:zero-temp-coordinate-map}
W=
\begin{pmatrix}
 u\cos2\delta+v\cos2\theta
 &v\sin2\theta-u\sin2\delta\\
 v\sin2\theta+u\sin2\delta
 &u\cos2\delta-v\cos2\theta
\end{pmatrix}.
\end{equation}
Thus $\theta$ records the common orientation of the two singular frames,
while $\delta$ records their mismatch.  In these coordinates,
\begin{equation}\label{eq:zero-temp-energy-coordinates}
E_2(W)
=\bigl(u\cos2\delta-1\bigr)^2+v^2\cos^22\theta.
\end{equation}

Define
\[
A_N(s,t)
=
\sum_{j=0}^{N-1}
 s^{2(N-1-j)/N}t^{2j/N}
=
\frac{s^2-t^2}{s^{2/N}-t^{2/N}},
\]
with the continuous value $A_N(s,s)=Ns^{2-2/N}$.

\begin{proposition}[Exact width-two system]\label{prop:zero-temp-svd-system}
In the preceding chart, the unregularized flow \eqref{eq:zero-temp-flow} is
\begin{equation}\label{eq:zero-temp-svd-system}
\begin{aligned}
\dot s
&=Ns^{2-2/N}
\left[(1-u\cos2\delta)\cos2\delta-v\cos^22\theta\right],\\
\dot t
&=Nt^{2-2/N}
\left[(1-u\cos2\delta)\cos2\delta+v\cos^22\theta\right],\\
\dot\theta
&=\frac{A_N(s,t)}4\sin4\theta,\\
\dot\delta
&=\frac{A_N(s,t)}{2u}
\bigl(u\cos2\delta-1\bigr)\sin2\delta.
\end{aligned}
\end{equation}
In particular, $\theta=\pi/4$ and $\delta=0$ define invariant
hypersurfaces, and their intersection is the symmetric equal-diagonal plane.
\end{proposition}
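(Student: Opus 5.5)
The plan is to differentiate the chart \eqref{eq:zero-temp-coordinate-map} and match it against the vector field $-\Aop_{N,W}(E_2'(W))$, using the fact that $\Aop_{N,W}$ is diagonal in the singular frame \eqref{eq:A-spectrum}. Write $W=U\Sigma V^T$ with $U=R(\theta+\delta)$, $V=R(\theta-\delta)$ and $\Sigma=\diag(s,t)$. Put $U_\perp=R(\pi/2)$ for the generator of rotations. A tangent vector then has the form
\[
\dot W=U\bigl(\dot\Sigma+(\dot\theta+\dot\delta)\,U_\perp^{}\Sigma-(\dot\theta-\dot\delta)\,\Sigma\, U_\perp^{}\bigr)V^T,
\]
using $R(\varphi)^TR'(\varphi)=U_\perp$ and the transpose of the right factor. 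In the singular frame the diagonal entries of the bracket are $\dot s,\dot t$. The off-diagonal entries are $(\dot\theta+\dot\delta)(\mp t)$ and $-(\dot\theta-\dot\delta)(\pm s)$; up to the sign conventions of $U_\perp$, the $(1,2)$ entry is $-(\dot\theta+\dot\delta)t+(\dot\theta-\dot\delta)s$ and the $(2,1)$ entry is $(\dot\theta+\dot\delta)s-(\dot\theta-\dot\delta)t$. From these, $\dot\theta$ is read off the antisymmetric-type combination with coefficient $(s-t)$, and $\dot\delta$ the symmetric-type combination with coefficient $(s+t)=2u$.

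Next I compute the right-hand side. Let $P=E_2'(W)=\diag(W_{11}-1,W_{22}-1)$. From \eqref{eq:zero-temp-coordinate-map},
\[
P=(u\cos2\delta-1)I+v\cos2\theta\,D .
\]
Conjugating into the singular frame gives $\widetilde P=U^TPV$. The identity part gives $(u\cos2\delta-1)R(-2\delta)$, since $U^TV=R(-2\delta)$. The $D$ part gives $v\cos2\theta\,R(-\theta-\delta)DR(\theta-\delta)$. Using $R(\alpha)D=DR(-\alpha)$, this equals $v\cos2\theta\,DR(2\theta)$. So
\[
\widetilde P=(u\cos2\delta-1)\begin{pmatrix}\cos2\delta&\sin2\delta\\-\sin2\delta&\cos2\delta\end{pmatrix}+v\cos2\theta\begin{pmatrix}\cos2\theta&-\sin2\theta\\-\sin2\theta&-\cos2\theta\end{pmatrix}.
\]
By \eqref{eq:A-spectrum}, $-\Aop(P)$ in the singular frame multiplies the diagonal entries by $Ns^{2-2/N}$ and $Nt^{2-2/N}$, and both off-diagonal entries by $A_N(s,t)$. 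The diagonal entries then give the $\dot s,\dot t$ equations immediately, with $\cos^2 2\theta$ arising as $\cos2\theta\cdot\cos2\theta$.

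For the off-diagonal entries, the $(1,2)$ and $(2,1)$ entries of $-\widetilde P$ are $-(u\cos2\delta-1)\sin2\delta+v\cos2\theta\sin2\theta$ and $(u\cos2\delta-1)\sin2\delta+v\cos2\theta\sin2\theta$. The difference of the two frame entries isolates the $\delta$ mode and the sum isolates the $\theta$ mode. The sum is $(s-t)(\dot\theta-\dot\delta)+(s-t)(\dot\theta+\dot\delta)$ up to signs; that is, the sum is $2(s-t)\dot\theta=4v\dot\theta$ and the difference is $2(s+t)\dot\delta=4u\dot\delta$. Matching gives
\[
4v\dot\theta=2A_N v\cos2\theta\sin2\theta
\quad\Longrightarrow\quad
\dot\theta=\tfrac{A_N}{4}\sin4\theta,
\]
and
\[
4u\dot\delta=2A_N(u\cos2\delta-1)\sin2\delta
\quad\Longrightarrow\quad
\dot\delta=\tfrac{A_N}{2u}(u\cos2\delta-1)\sin2\delta .
\]
The factor $v$ cancels here, which is why the chart requires $s>t$.

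The main obstacle is purely bookkeeping: fixing the sign convention for the rotation generator and checking which combination of off-diagonal entries corresponds to $\theta$ versus $\delta$. I would verify this once at $\theta=\delta=0$ by direct perturbation, to avoid a global sign error.

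The invariance claims follow by inspection. If $\theta=\pi/4$, then $\sin4\theta=0$, so $\dot\theta=0$. If $\delta=0$, then $\sin2\delta=0$, so $\dot\delta=0$. Their intersection corresponds, via \eqref{eq:zero-temp-coordinate-map}, to matrices $\begin{pmatrix}u&v\\v&u\end{pmatrix}$, which is the symmetric equal-diagonal plane.
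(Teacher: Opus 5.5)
Your proposal is correct, and your explicit formulas all check out. With $U_\perp=R(\pi/2)$, the off-diagonal frame entries of $U^T\dot WV$ are exactly $2v\dot\theta-2u\dot\delta$ and $2v\dot\theta+2u\dot\delta$, so no sign ambiguity remains and your proposed check at $\theta=\delta=0$ is unnecessary. Your $\widetilde P=U^TE_2'(W)V$ is also right, so the sum and the difference $(2,1)-(1,2)$ give the $\theta$ and $\delta$ equations exactly as you state.

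Your route differs from the paper's only in the last step. The paper uses the same frame variation to write the pulled-back metric in chart coordinates, which is diagonal:
\[
g^N=\frac{ds^2}{Ns^{2-2/N}}+\frac{dt^2}{Nt^{2-2/N}}+\frac{8v^2}{A_N(s,t)}\,d\theta^2+\frac{8u^2}{A_N(s,t)}\,d\delta^2.
\]
It then takes the coordinate gradient of $E_2=(u\cos2\delta-1)^2+v^2\cos^22\theta$. You instead push the Euclidean gradient into the singular frame and match entries of $U^T\dot WV$ against $-a_{ij}\widetilde P_{ij}$. This needs the conjugation identity $R(\alpha)D=DR(-\alpha)$ but no coordinate form of the energy.

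Your version is a direct verification of the vector field. The paper's version also shows that $(s,t,\theta,\delta)$ are $g^N$-orthogonal coordinates, and it produces the coordinate energy formula that is reused later, e.g.\ $E_2=(x-1)^2$ on $\theta=\pi/4$ in the equal-diagonal theorem.
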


\begin{proof}
A variation of the singular-value decomposition satisfies
\[
U^T(dW)V
=
\begin{pmatrix}
 ds&2v\,d\theta-2u\,d\delta\\
 2v\,d\theta+2u\,d\delta&dt
\end{pmatrix}.
\]
The eigenvalues of $\Aop_{N,W}$ in the SVD frame are
\[
Ns^{2-2/N},\qquad Nt^{2-2/N},\qquad A_N(s,t),\qquad A_N(s,t).
\]
Consequently, the Riemannian metric is
\[
g^N
=
\frac{ds^2}{Ns^{2-2/N}}
+
\frac{dt^2}{Nt^{2-2/N}}
+
\frac{8v^2}{A_N(s,t)}\,d\theta^2
+
\frac{8u^2}{A_N(s,t)}\,d\delta^2.
\]
Differentiating \eqref{eq:zero-temp-energy-coordinates} and taking its
negative gradient with respect to this diagonal metric gives
\eqref{eq:zero-temp-svd-system}.
\end{proof}

The angular equations in \eqref{eq:zero-temp-svd-system} separate much of the
geometry.  If $0<\theta<\pi/2$, then $\theta$ moves monotonically toward
$\pi/4$.  If $u\cos2\delta<1$ and $|\delta|<\pi/4$, then $|\delta|$ decreases.
Thus $\theta\to\pi/4$ damps the difference of the observed diagonal
entries, while $\delta\to0$ aligns the left and right singular frames.  The
latter angular alignment can coexist, as shown below, with transient
amplification of the antisymmetric matrix coordinate.

\subsection{The symmetric invariant plane}

Let
\[
Q=
\begin{pmatrix}0&1\\1&0\end{pmatrix},
\qquad
\mathcal P=\{xI+zQ:x,z\in\mathbb R\}.
\]
The plane $\mathcal P$ is invariant under \eqref{eq:zero-temp-flow}.  If
\[
\lambda_+=x+z,
\qquad
\lambda_-=x-z
\]
are the two signed eigenvalues, then
\begin{equation}\label{eq:zero-temp-symmetric-eigenvalues}
\dot\lambda_\pm
=N(1-x)|\lambda_\pm|^{2-2/N},
\qquad
x=\frac{\lambda_++\lambda_-}{2}.
\end{equation}
This system can be integrated exactly up to the terminal zero-loss condition
$x_\infty=1$.

\begin{proposition}[Exact depth transition on the symmetric plane]
\label{prop:zero-temp-symmetric-plane}
In each case below, fix $a,b>0$ and assume
$0<\eps<2/(a+b)$.  Let $W_0^{\eps}\in\mathcal P$ be the matrix with the
displayed signed eigenvalues.  Its endpoint exists in each case; write
$W_\infty^{\eps}=\mathcal E_N(W_0^{\eps})$.
\begin{enumerate}[label=\textup{(\roman*)}]
\item Let $N=2$ and
\[
\lambda_+(0)=\eps a,
\qquad
\lambda_-(0)=\eps b,
\qquad a,b>0.
\]
Then
\[
W_\infty^{\eps}
=
\begin{pmatrix}
1&\dfrac{a-b}{a+b}\\[2mm]
\dfrac{a-b}{a+b}&1
\end{pmatrix}.
\]
Thus the endpoint is independent of $\eps$ and, as $a/b$ varies, fills
\[
\left\{
\begin{pmatrix}1&c\\c&1\end{pmatrix}:|c|<1
\right\}.
\]
The closure of this segment is exactly the set of nuclear-norm minimizers in
$\mathcal Z$: for $W\in\mathcal Z$, one has
$\|W\|_*\geq\operatorname{tr}W=2$, with equality precisely for the
positive semidefinite completions.

\item Let $N=2$ and
\[
\lambda_+(0)=\eps a>0,
\qquad
\lambda_-(0)=-\eps b<0.
\]
Then
\[
\lambda_-(\infty)=-\delta_\eps,
\qquad
\lambda_+(\infty)=2+\delta_\eps,
\]
where
\[
\delta_\eps=\sqrt{1+\eps^2ab}-1
=\frac{ab}{2}\eps^2+O(\eps^4).
\]
In particular,
\[
\sigma_2(W_\infty^{\eps})=\delta_\eps
=\frac{ab}{2}\eps^2+O(\eps^4),
\]
and $W_\infty^{\eps}\to J$.

\item Let $N>2$, put $p_N=(N-2)/N$, and suppose
\[
\lambda_+(0)=\eps a,
\qquad
\lambda_-(0)=\eps b,
\qquad a>b>0.
\]
Then
\[
\sigma_2(W_\infty^{\eps})=\lambda_-(\infty)
\sim
\bigl(b^{-p_N}-a^{-p_N}\bigr)^{-1/p_N}\eps,
\qquad
W_\infty^{\eps}\longrightarrow J.
\]
If instead
\[
\lambda_+(0)=\eps a>0,
\qquad
\lambda_-(0)=-\eps b<0,
\]
then
\[
\sigma_2(W_\infty^{\eps})=|\lambda_-(\infty)|
\sim
\bigl(a^{-p_N}+b^{-p_N}\bigr)^{-1/p_N}\eps,
\qquad
W_\infty^{\eps}\longrightarrow J.
\]
Conjugating by $D$, equivalently exchanging the two eigenlines, gives the
corresponding limits toward $DJD$.
\end{enumerate}
For every admissible $\eps>0$ the endpoints in all of these cases are full rank.
\end{proposition}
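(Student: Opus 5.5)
The plan is to integrate the decoupled eigenvalue system \eqref{eq:zero-temp-symmetric-eigenvalues} through conserved quantities and then fix the terminal state by the zero-loss condition $x_\infty=1$.

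\emph{Separating the equations.} Dividing the two equations removes the common factor $N(1-x)$ and gives $|\lambda_+|^{-(2-2/N)}d\lambda_+=|\lambda_-|^{-(2-2/N)}d\lambda_-$. Hence there is a first integral.
\begin{itemize}
\item For $N=2$, $\lambda_+^{-1}-\lambda_-^{-1}$ is conserved on same-sign data. On opposite-sign data, with $\lambda_-<0$, the conserved quantity is $\log\lambda_+ +\log|\lambda_-|$, that is, $\lambda_+|\lambda_-|$.
\item For $N>2$, $\sgn(\lambda)|\lambda|^{-p_N}$ enters with coefficient $-1/p_N$, so $\lambda_+^{-p_N}-\lambda_-^{-p_N}$ is conserved in the same-sign case and $\lambda_+^{-p_N}+|\lambda_-|^{-p_N}$ in the mixed case.
\end{itemize}

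\emph{Sign preservation and convergence.} Since $\dot\lambda_\pm$ vanish at $\lambda_\pm=0$, signs are preserved. The standing assumption $\eps<2/(a+b)$ gives $x(0)<1$, and the factor $(1-x)$ makes each $\lambda_\pm$ monotone in the same direction as long as $x<1$. The hypersurface $x=1$ consists of equilibria and cannot be crossed in finite time.

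I will set $y=1-x$. Then $\dot y=-\tfrac N2 y(|\lambda_+|^{p}+|\lambda_-|^{p})$ with $p=2-2/N$. In the same-sign case $\lambda_+$ increases away from zero. In the mixed case $\lambda_+$ also increases, because $\dot\lambda_+>0$. So $|\lambda_+|^p$ is bounded below by a positive constant, and $y\to0$ exponentially.

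It follows that $\dot\lambda_\pm$ is integrable, the endpoint exists, and $x_\infty=1$, i.e.\ $\lambda_+(\infty)+\lambda_-(\infty)=2$. Boundedness of the $\lambda_\pm$ is not automatic. I get it from the first integrals: they confine the $\lambda$'s to a curve, and $y\ge0$ gives $\lambda_+ +\lambda_-\le2$.

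\emph{Solving at the endpoint.}
\begin{itemize}
\item Case (i): from $\lambda_+^{-1}-\lambda_-^{-1}=\eps^{-1}(a^{-1}-b^{-1})$ and $\lambda_++\lambda_-=2$, write $\lambda_\pm=1\pm c$. Then $1/(1+c)-1/(1-c)=-2c/(1-c^2)$, and the relation is solved by $c=(a-b)/(a+b)$. This uses $\eps<2/(a+b)$ to select the branch with $|c|<1$, and it gives $z=c$ independent of $\eps$. The nuclear-norm remark follows from $\|W\|_*\ge|\operatorname{tr}W|$, with equality iff $W\succeq0$.
\item Case (ii): $\lambda_+|\lambda_-|=\eps^2ab$ together with $\lambda_+-|\lambda_-|=2$ gives $|\lambda_-|=\sqrt{1+\eps^2ab}-1$.
\item Case (iii): let $m=\lambda_-(\infty)$ and $\lambda_+(\infty)=2-m$. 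The conserved quantity is of order $\eps^{-p_N}$, so $m\to0$ while $2-m$ stays of order one. Thus $m^{-p_N}=\eps^{-p_N}(b^{-p_N}-a^{-p_N})+O(1)$, which yields the asymptotics, and the mixed case is the same with a plus sign.
\end{itemize}
In all cases the endpoint is nonzero because of the strict monotone ordering.

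\emph{Full rank and conjugation.} Full rank of the endpoints comes either from these explicit formulas or from \cref{prop:zero-temp-rank-one}(ii). Conjugating by $D$ maps $\mathcal P$ to itself, swaps $\lambda_\pm$, and sends $J$ to $DJD$.

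The main obstacle is the global existence and exact terminal-condition step: I must rule out degeneration and show $y\to0$ rather than merely decreasing. The plan is to use the lower bound on $|\lambda_+|^p$ from monotonicity.
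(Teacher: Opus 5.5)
\textbf{Genuine error in case (i).} At $N=2$ your separated equation reads $d\lambda_+/|\lambda_+|=d\lambda_-/|\lambda_-|$. Its antiderivative is $\log|\lambda|$, not $\lambda^{-1}$. So on same-sign data the first integral is the ratio $\lambda_+/\lambda_-$, since $\frac{d}{dt}\log\lambda_\pm=2(1-x)$. The quantity you wrote is not conserved: $\frac{d}{dt}(\lambda_+^{-1}-\lambda_-^{-1})=-2(1-x)(\lambda_+^{-1}-\lambda_-^{-1})\neq0$ whenever $a\neq b$. It is the $p_N=1$ analogue of your $N>2$ invariant, whereas depth two corresponds to the degenerate limit $p_N\to0$, where $\psi_N$ becomes a logarithm.

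Your algebra also does not close. With $c=(a-b)/(a+b)$ one gets $1/(1+c)-1/(1-c)=(a+b)(b-a)/(2ab)$. This equals $\eps^{-1}(a^{-1}-b^{-1})=\eps^{-1}(b-a)/(ab)$ only at the excluded value $\eps=2/(a+b)$. Solving the relation you actually wrote gives $c/(1-c^2)=(a-b)/(2\eps ab)$. That makes $c$ depend on $\eps$, with $|c|\to1$ as $\eps\to0$, i.e. rank collapse at depth two. This is the opposite of the statement, so the slip is not cosmetic.

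\textbf{The fix.} Ratio conservation gives $\lambda_+(\infty)/\lambda_-(\infty)=a/b$. Combined with $\lambda_+(\infty)+\lambda_-(\infty)=2$, this yields $\lambda_\pm(\infty)=2a/(a+b)$ and $2b/(a+b)$, so $z_\infty=(a-b)/(a+b)$ for every admissible $\eps$. No branch selection is involved. The hypothesis $\eps<2/(a+b)$ is used only to guarantee $x(0)<1$.

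\textbf{What is correct.} Apart from this step, your route is the paper's. The mixed-sign depth-two invariant $\lambda_+|\lambda_-|$ and the $N>2$ invariants $\psi_N(\lambda_+)-\psi_N(\lambda_-)$ are exactly the ones used there, and your endpoint algebra for cases (ii) and (iii) is right. Your argument that $y=1-x$ decays exponentially, via $\lambda_+(t)\geq\eps a$, is more complete than the paper's. The paper only notes that $x$ is nondecreasing and cannot cross $1$, which by itself does not force $x_\infty=1$.

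\textbf{Minor point.} Boundedness does not need the first integrals. Sign preservation and monotonicity already give $0<\lambda_\pm<2$ in the same-sign case. In the mixed case they give $-\eps b\leq\lambda_-<0$ and $\lambda_+\leq2+\eps b$.
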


\begin{proof}
Equation \eqref{eq:zero-temp-symmetric-eigenvalues} shows that the plane is
invariant.  In every case under consideration $x(0)<1$.
On the symmetric plane $\mathcal{P}$, since $2x= \lambda_++\lambda_0 $,
$$\dot{x} = \frac{N}{2} (1-x)\left( |\lambda_-|^{2-2/N} + |\lambda_+|^{2-2/N}\right) $$
which is positive is $x\in(0,1)$ and zero when $x = 1$.

On the invariant hypersurface $\theta = \pi/4$, $\dot{x} = (1-x)Q_N(s,t,\delta)$ where $Q_N(s,t,\delta)$ is a positive function. This also implies that $\dot{x}$ is positive is $x\in(0,1)$ and zero when $x = 1$. It follows that $x(t)$ is nondecreasing and cannot cross $1$. In either case, $x_\infty=1$. Note that since $2x= \lambda_++\lambda_0$, we have that $2= \lambda_+(\infty)+\lambda_0(\infty)$.

When $N=2$ and both eigenvalues are positive,
\[
\frac{d}{dt}\log\lambda_+
=
\frac{d}{dt}\log\lambda_-
=2(1-x),
\]
so their ratio is conserved, and so in the limit we have that $\frac{\lambda_+(\infty)}{\lambda_-(\infty)}=\frac{a}{b}$. This and the identity $\lambda_+(\infty)+\lambda_-(\infty)=2$ derived above gives a system of equations with solutions
$$\lambda_+(\infty) = \frac{2a}{a+b},\;\;\; \lambda_-(\infty) = \frac{2b}{a+b}.$$
Reconstructing $W^\varepsilon_\infty$ using the appropriate eigenbasis we recover the explicit form $W^\varepsilon_\infty$ above and (i) is proved.

If $N=2$ and the signs are opposite, then
\[
\frac{d}{dt}\log\lambda_+=2(1-x),
\qquad
\frac{d}{dt}\log|\lambda_-|=-2(1-x).
\]
Adding both equations we get that $\log \lambda_+|\lambda_-|$ is constant. Using the initial values we obtain that $\lambda_+|\lambda_-|=\eps^2ab$. Writing the terminal values as
$\lambda_-(\infty) = -\delta_\eps$ and $\lambda_+ = 2+\delta_\eps$ gives
\[
\delta_\eps(2+\delta_\eps)=\eps^2ab.
\]
Solving for $\delta_\eps$ finishes the proof of part~\textup{(ii)}.

Suppose now that $N>2$ and define
\[
\psi_N(\lambda)=\sgn(\lambda)|\lambda|^{-p_N}.
\]
A direct differentiation of \eqref{eq:zero-temp-symmetric-eigenvalues} gives
\[
\frac{d}{dt}\psi_N(\lambda_\pm)=-(N-2)(1-x).
\]
Thus
\[
\psi_N(\lambda_+)-\psi_N(\lambda_-)
\]
is conserved.  In the positive-positive case, if
$t_\eps=\lambda_-(\infty)$, then
\[
t_\eps^{-p_N}-(2-t_\eps)^{-p_N}
=
\eps^{-p_N}\bigl(b^{-p_N}-a^{-p_N}\bigr).
\]
This implies the first asymptotic in part~\textup{(iii)}.  In the
positive-negative case, if
$\delta_\eps=|\lambda_-(\infty)|$, then
\[
(2+\delta_\eps)^{-p_N}+\delta_\eps^{-p_N}
=
\eps^{-p_N}\bigl(a^{-p_N}+b^{-p_N}\bigr),
\]
which gives the second asymptotic.  The matrix limits follow from the fixed
eigenvectors of $\mathcal P$.
\end{proof}

In terms of the endpoint map, the preceding formulas quantify its behavior at
the origin.  Put $p_N=(N-2)/N$.  In the same-sign family of
part~\textup{(iii)},
\[
\sigma_2\bigl(\mathcal E_N(W_0^{\eps})\bigr)
\sim
\bigl(b^{-p_N}-a^{-p_N}\bigr)^{-1/p_N}\eps,
\]
while in its mixed-sign family,
\[
\sigma_2\bigl(\mathcal E_N(W_0^{\eps})\bigr)
\sim
\bigl(a^{-p_N}+b^{-p_N}\bigr)^{-1/p_N}\eps.
\]
In both cases $W_0^{\eps}\to0$ and
$\mathcal E_N(W_0^{\eps})\to J$; exchanging the two eigenlines gives the
corresponding limit $DJD$.  In the mixed-sign depth-two family of
part~\textup{(ii)},
\[
\sigma_2\bigl(\mathcal E_2(W_0^{\eps})\bigr)
=\frac{ab}{2}\eps^2+O(\eps^4),
\qquad
\mathcal E_2(W_0^{\eps})\longrightarrow J.
\]
Since $\mathcal E_N(0)=0$, these scaled families prove that $\mathcal E_N$ is
discontinuous at the origin.  By contrast, part~\textup{(i)}
shows that the depth-two same-sign families have scale-independent full-rank
endpoints rather than rank collapse.

\begin{remark}[Relation to the results of Shin and Yun]\label{rem:SY-correspondence}
The same-sign cases of \cref{prop:zero-temp-symmetric-plane} recover, in width two, the diagonal-observation case of \cite[Theorem 3.3]{ShinYun2026}, which precedes this paper.  Shin and Yun analyze the factor gradient flow with every layer initialized to the symmetric matrix $(\alpha-\alpha/m)I_d+(\alpha/m)J_d$ for $\alpha>0$ and $m>1$.  This initialization is balanced, so the end-to-end trajectory solves the reduced flow \eqref{eq:zero-temp-flow}; at $d=2$ with unit diagonal targets it lies on the plane $\mathcal P$ with
\[
\lambda_+(0)=\Bigl(\frac{\alpha(m+1)}{m}\Bigr)^{\!N},
\qquad
\lambda_-(0)=\Bigl(\frac{\alpha(m-1)}{m}\Bigr)^{\!N}.
\]
Their implicit equations for the limiting singular values \cite[equations (8) and (9)]{ShinYun2026} are precisely the conservation of $\psi_N(\lambda_+)-\psi_N(\lambda_-)$ used in the proof above, their depth-two closed form is the endpoint of part~\textup{(i)}, and their decoupled case $m=\infty$ is the degenerate ray $a=b$, whose endpoint is the identity.  Writing $\operatorname{srank}(W)=\|W\|_F^2/\|W\|_{\mathrm{op}}^2$, their Corollary~3.4 states that the stable rank of the limit tends to one as $\alpha\to0$.  In width two, their end-to-end initialization scale is $\eps=\alpha^N$ up to constants depending on $m$ and $N$; part~\textup{(iii)} therefore gives a terminal smaller singular value of order $\alpha^N$, while every fixed-$\alpha$ endpoint remains full rank, consistently with \cref{prop:zero-temp-rank-one}.  Since the ratio $\lambda_+(0)/\lambda_-(0)=\bigl((m+1)/(m-1)\bigr)^N$ covers all of $(1,\infty)$, their family realizes every same-sign initialization on $\mathcal P$ up to scale.  The opposite-sign cases, in which the determinant is negative, and \cref{thm:zero-temp-equal-diagonal} have no analogue in \cite{ShinYun2026}.  The derivation here also gives their algebraic computation a geometric interpretation: the implicit equations are conservation laws of the Riemannian system \eqref{eq:zero-temp-symmetric-eigenvalues}.  At depth two, the positive-determinant segment law of part~\textup{(i)} should also be compared with the connectivity framework of Bai, Zhao, and Zhang, in which depth-two completion with disconnected observations whose components are complete bipartite selects minimal-nuclear-norm solutions under additional assumptions \cite{BaiZhaoZhang2024}.
\end{remark}

The proposition gives a rigorous form of the depth-two-versus-deeper
transition observed numerically.  At depth two, positive determinant can
retain two singular values of order one as $\eps\to0$.  At every depth
$N>2$, the smaller terminal singular value is of order $\eps$ on this plane.
For negative determinant, depth two instead produces the smaller scale
$\eps^2$.

\subsection{A nonlinear nonsymmetric perturbation}

The symmetric plane sets both angular variables to their invariant values.
The next result allows a fixed mismatch between the left and right singular
frames.  It is therefore a genuinely nonsymmetric perturbation, although the
two observed diagonal entries remain equal.

\begin{theorem}[Equal-diagonal perturbations]\label{thm:zero-temp-equal-diagonal}
Fix $N>2$, $a>b>0$, and
$\delta_0\in(-\pi/4,\pi/4)$.  Let $W_0^{\eps}$ be the matrix whose
coordinates in \eqref{eq:zero-temp-coordinate-map} satisfy
\[
s(0)=\eps a,
\qquad
t(0)=\eps b,
\qquad
\theta(0)=\frac\pi4,
\qquad
\delta(0)=\delta_0.
\]
Assume
\[
0<\eps<\frac{2}{(a+b)\cos2\delta_0}.
\]
Then the solution converges to a full-rank zero-loss completion
$W_{\eps,\infty}=\mathcal E_N(W_0^{\eps})$.  If $p_N=(N-2)/N$, then
\[
t_\infty
=\sigma_2(W_{\eps,\infty})
\sim
\bigl(b^{-p_N}-a^{-p_N}\bigr)^{-1/p_N}\eps.
\]
Moreover, there is a constant $C_{N,a,b}>0$ such that
\[
\sin2\delta_\infty
=
C_{N,a,b}\sin2\delta_0\,
\eps^{2/N}\bigl(1+o(1)\bigr).
\]
Consequently,
\[
W_{\eps,\infty}\longrightarrow J
\qquad\text{as }\eps\to0,
\]
and
\[
(W_{\eps,\infty})_{21}-(W_{\eps,\infty})_{12}
=
2C_{N,a,b}\sin2\delta_0\,
\eps^{2/N}\bigl(1+o(1)\bigr).
\]
In particular, when $\sin2\delta_0\neq0$, the terminal asymmetry has exact
order $\eps^{2/N}$.
\end{theorem}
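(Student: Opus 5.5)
The plan is to work entirely on the invariant hypersurface $\theta=\pi/4$ of \cref{prop:zero-temp-svd-system}. There $\cos^22\theta=0$, so \eqref{eq:zero-temp-energy-coordinates} reduces to $E_2=(u\cos2\delta-1)^2$. Writing $c=\cos2\delta$ and $r=1-uc$, the system \eqref{eq:zero-temp-svd-system} becomes
\[
\dot s=Ns^{2-2/N}rc,\qquad \dot t=Nt^{2-2/N}rc,\qquad \dot\delta=-\frac{A_N(s,t)}{2u}\,r\sin2\delta .
\]
The hypothesis on $\eps$ says exactly that $r(0)>0$. The first step is a monotonicity argument showing that $r$ stays positive and decreases to $0$.
\begin{itemize}
\item While $r>0$ and $|\delta|<\pi/4$, the $\dot\delta$ equation makes $|\delta|$ decrease, so $c$ increases, and $u$ increases.
\item A direct computation should give $\frac{d}{dt}(uc)=r\,Q$ with $Q>0$. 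This is the $\theta=\pi/4$ version of the positivity used in the proof of \cref{prop:zero-temp-symmetric-plane}.
\item Hence $uc\uparrow 1$ and $r$ cannot change sign.
\item Boundedness of $s$ and $t$ (from $u\le 1/\cos2\delta_0$), together with the exponential residual decay from \cref{prop:zero-temp-rank-one}, then gives convergence to a zero-loss endpoint $W_{\eps,\infty}$ with $u_\infty\cos2\delta_\infty=1$.
\item \cref{prop:zero-temp-rank-one}(ii) also shows this endpoint is full rank.
\end{itemize}

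The second step handles $t_\infty$ via a conservation law. The $s$ and $t$ equations share the factor $rc$, so exactly as in \cref{prop:zero-temp-symmetric-plane}(iii),
\[
\frac{d}{dt}s^{-p_N}=\frac{d}{dt}t^{-p_N}=-(N-2)rc .
\]
Hence $t^{-p_N}-s^{-p_N}=\eps^{-p_N}(b^{-p_N}-a^{-p_N})$ is conserved. Since $s_\infty+t_\infty=2/c_\infty\in[2,2/\cos2\delta_0]$ is bounded and bounded away from $0$, and the right side blows up, we get $t_\infty\to0$. Then $t_\infty\sim(b^{-p_N}-a^{-p_N})^{-1/p_N}\eps$.

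The third step handles the angle. Observe that $\frac{d}{dt}\log|\tan\delta|=2\dot\delta/\sin2\delta=-A_N r/u$ and $\dot u=\tfrac N2 rc(s^{2-2/N}+t^{2-2/N})$. Dividing eliminates $r$ and gives
\[
\frac{d\log|\tan\delta|}{du}=-\frac{2A_N(s,t)}{Nuc\,(s^{2-2/N}+t^{2-2/N})}.
\]
Since $u$ is monotone, it serves as the time variable, running from $\eps(a+b)/2$ to $u_\infty\to1$. I would split the integrand as $-\frac{2}{N}\frac1u$ plus a remainder. Integrating the leading term produces $\bigl(\eps(a+b)/2\bigr)^{2/N}$, which is the source of $\eps^{2/N}$.

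The main obstacle is showing that the remainder integral converges to a finite limit as $\eps\to0$. The remainder combines the factor $1/c-1$ with the deviation of $A_N/(s^{2-2/N}+t^{2-2/N})$ from $1$. To control it I would pass to the rescaled variable $\xi=u/\eps$ on the early phase, where $t/s$ is of order one and the integrand is an explicit $\eps$-independent function of $\xi$ through the conservation law. On the late phase, $t/s=O(\eps^{\,\cdot})\to0$ and $A_N\approx s^{2-2/N}$, so the contribution converges. Dominated convergence would then apply, using $|\delta|\le|\delta_0|$ and $c\ge\cos2\delta_0$. Also, since $\delta_\infty\to0$, the $1/c$ correction is integrable. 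This yields $\tan\delta_\infty=C\tan\delta_0\,\eps^{2/N}(1+o(1))$, with $C$ depending only on $N,a,b$, because the $\delta_0$-dependence washes out as $\delta\to0$. Converting to $\sin2\delta$ gives the stated asymptotic.

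Finally, the coordinate formula \eqref{eq:zero-temp-coordinate-map} with $\theta=\pi/4$ gives $W_{21}-W_{12}=2u_\infty\sin2\delta_\infty$. Using $u_\infty\to1$ and $v_\infty\to1$, this yields both the asymmetry asymptotic and $W_{\eps,\infty}\to J$.
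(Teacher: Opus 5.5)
The structural part of your argument is sound: the monotonicity on $\theta=\pi/4$, the bound $u\le 1/\cos2\delta_0$, and the conservation law $t^{-p_N}-s^{-p_N}=\eps^{-p_N}(b^{-p_N}-a^{-p_N})$ with the resulting asymptotic for $t_\infty$ all match the paper. One small looseness: the exponential residual decay in \cref{prop:zero-temp-rank-one} is proved near $\mathcal H$ under a convergence hypothesis. It is simpler to note that $s,t,|\delta|$ are monotone and bounded, hence converge, and that $x_\infty<1$ would keep $\dot s$ bounded below.

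The genuine gap is in the angular step. By tracking $\log|\tan\delta|$ you leave the factor $1/c=1/\cos2\delta$ in the integrand. That integrand is then not determined by the conservation law alone; it depends on the solution $\delta(\cdot)$, and hence on $\delta_0$. Your claim that this dependence \emph{washes out} is false. Since $\log|\sin2\delta|-\log|\tan\delta|=\log(1+\cos2\delta)$, the $(1/c-1)$ correction integrates exactly to
\[
\log\frac{1+\cos2\delta_\infty}{1+\cos2\delta_0}\longrightarrow-\log\cos^2\delta_0 .
\]
This is an $O(1)$, $\delta_0$-dependent amount, accumulated in the early phase $u=O(\eps)$ where $\delta$ is still comparable to $\delta_0$. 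The correct tangent asymptotic is $\tan\delta_\infty=C_{N,a,b}\cos^2\delta_0\tan\delta_0\,\eps^{2/N}(1+o(1))$. If you convert your claimed formula $\tan\delta_\infty=C\tan\delta_0\,\eps^{2/N}(1+o(1))$, you get $\sin2\delta_\infty=C\sec^2\delta_0\sin2\delta_0\,\eps^{2/N}(1+o(1))$. Its prefactor depends on $\delta_0$, so the stated theorem does not follow. Relatedly, $c\geq\cos2\delta_0$ does not give an integrable majorant for $(1/c-1)/\xi$ on $[\xi_0,\infty)$; dominated convergence would first need a decay rate for $\delta$ in $\xi$.

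The repair is to track $\log|\sin2\delta|$ instead, as the paper does. Then $\frac{d}{dt}\log|\sin2\delta|=-A_N c\,r/u$, and dividing by $\dot s=Ns^{2-2/N}rc$ cancels both $r$ and $c$:
\[
\frac{d}{ds}\log|\sin2\delta|=-\frac{2A_N(s,t)}{N(s+t)s^{2-2/N}} .
\]
Substitute $t=\eps h(s/\eps)$ with $h(q)=(q^{-p_N}+b^{-p_N}-a^{-p_N})^{-1/p_N}$, and use the homogeneity of $A_N$. The result is an integral over $q\in[a,s_\infty/\eps]$ of an explicit, $\delta_0$-free function equal to $\frac{2}{Nq}+O(q^{-1-2/N})$. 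No early/late splitting or dominated convergence is needed, and $\delta_0$ enters only through $\sin2\delta_0$. Finally, $s_\infty\to2$ gives $C_{N,a,b}=(a/2)^{2/N}e^{-L}$, where $L$ is the convergent integral over $[a,\infty)$ of the integrand minus $2/(Nq)$.
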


\begin{proof}
The hypersurface $\theta=\pi/4$ is invariant by
\eqref{eq:zero-temp-svd-system}.  On this hypersurface, put
$x=u\cos2\delta$.  The displayed bound on $\eps$ gives $x(0)<1$.  Since
$E_2=(x-1)^2$ there, energy dissipation implies that $x(t)$ increases and
cannot cross $1$.  Also $|\delta(t)|$ decreases, so
$\cos2\delta(t)\geq\cos2\delta_0>0$.  It follows that $s$ and $t$ increase,
while
\[
u(t)=\frac{x(t)}{\cos2\delta(t)}\leq\frac1{\cos2\delta_0}.
\]
Thus the trajectory is bounded and remains full rank for every finite time.
Its monotone coordinates have limits.  If $x_\infty<1$, the equation for
$\dot s$ would remain bounded below by a positive constant, contradicting
boundedness.  Hence $x_\infty=1$, so the terminal point has zero loss.

On $\theta=\pi/4$, the $s$ and $t$ equations imply the exact conservation
law
\begin{equation}\label{eq:zero-temp-equal-diagonal-invariant}
t^{-p_N}-s^{-p_N}
=
\eps^{-p_N}\bigl(b^{-p_N}-a^{-p_N}\bigr).
\end{equation}
Since $x_\infty=1$ implies $u_\infty\geq1$, one has
$s_\infty\geq1$.  It follows from
\eqref{eq:zero-temp-equal-diagonal-invariant} that
\[
t_\infty
\sim
\bigl(b^{-p_N}-a^{-p_N}\bigr)^{-1/p_N}\eps.
\]

For $\delta_0\neq0$, the angular equation and the $s$ equation give
\begin{equation}\label{eq:zero-temp-delta-versus-s}
\frac{d}{ds}\log|\sin2\delta|
=-\frac{2A_N(s,t)}{N(s+t)s^{2-2/N}}.
\end{equation}
The case $\delta_0=0$ follows by invariance.  Put
\[
c_0=b^{-p_N}-a^{-p_N},
\qquad
h(q)=\bigl(q^{-p_N}+c_0\bigr)^{-1/p_N}.
\]
Equation \eqref{eq:zero-temp-equal-diagonal-invariant} gives
$t=\eps h(s/\eps)$.  By the homogeneity of $A_N$, integration of
\eqref{eq:zero-temp-delta-versus-s} yields
\[
\log\frac{|\sin2\delta_0|}{|\sin2\delta_\infty|}
=
\int_a^{s_\infty/\eps}
\frac{2A_N(q,h(q))}{N(q+h(q))q^{2-2/N}}\,dq.
\]
The integrand satisfies
\[
\frac{2A_N(q,h(q))}{N(q+h(q))q^{2-2/N}}
=
\frac{2}{Nq}+O(q^{-1-2/N})
\qquad(q\to\infty).
\]
Consequently, the integral equals
\[
\frac2N\log\frac1\eps+O(1).
\]
It follows that $\delta_\infty\to0$.  Since
$x_\infty=u_\infty\cos2\delta_\infty=1$, we obtain
$u_\infty\to1$, and the preceding asymptotic for $t_\infty$ gives
$s_\infty\to2$.  Subtracting $2/(Nq)$ from the integrand shows that the
$O(1)$ term has a finite limit.  More precisely,
\[
L_{N,a,b}
=
\int_a^\infty
\left[
\frac{2A_N(q,h(q))}{N(q+h(q))q^{2-2/N}}
-
\frac2{Nq}
\right]dq
\]
converges, and
\[
C_{N,a,b}
=
\exp(-L_{N,a,b})\left(\frac a2\right)^{2/N}>0.
\]
This proves the asserted asymptotic for $\sin2\delta_\infty$.

Finally, \eqref{eq:zero-temp-coordinate-map} with $\theta=\pi/4$ gives
\[
(W_{\eps,\infty})_{21}-(W_{\eps,\infty})_{12}
=2u_\infty\sin2\delta_\infty.
\]
Since $u_\infty\to1$, the asymmetry estimate and the convergence to $J$
follow.
\end{proof}

At depth three, when $\sin2\delta_0\neq0$, the theorem gives the two
distinct endpoint scales
\[
\sigma_2(W_{\eps,\infty})\asymp\eps,
\qquad
|(W_{\eps,\infty})_{21}-(W_{\eps,\infty})_{12}|
\asymp\eps^{2/3}.
\]
Thus the endpoint can be much less symmetric than its smallest singular value
would suggest, while still converging to the symmetric rank-one matrix $J$.

\section{Interpretation and open problems}
\label{sec:discussion}

\subsection{Failure of an equilibrium selection principle}

The original thermodynamic proposal was that the minimizing set
\[
\operatorname*{argmin}_{W\in\GLd}F_\beta(W)
\]
might select among a degenerate family of energy minimizers, especially as $\beta\to\infty$ \cite{Menon2025,MenonYu2026}.  For the diagonal-completion loss, the proposed set is empty for every finite $\beta$.  The stronger dynamical conclusions are:
\[
\begin{array}{c}
\text{the only finite full-rank equilibria are saddles},\\[0.3em]
\text{finite rank-deficient limits are impossible},\\[0.3em]
\text{and generic trajectories are unbounded}.
\end{array}
\]
Thus fiber entropy does not select one completion from the affine zero-loss set.  It creates drift along noncompact directions that the loss does not control.

The obstruction is not peculiar to the diagonal observation pattern.  By \cref{thm:linear-sensing-dichotomy}, every noninjective squared linear matrix-sensing loss on the unrestricted matrix space has unbounded free energy.  The criterion in \cref{prop:Theta-criterion} gives the more intrinsic statement: entropic minimization fails whenever a finite energy sublevel carries arbitrarily large fiber entropy.  Diagonal completion realizes the strongest version of this pathology, since the entropy is already unbounded on the exact minimizing set.

This does not make the entropy irrelevant.  It identifies a different mechanism: after the energy has nearly minimized the observed coordinates, the entropy acts on the flat unobserved degrees of freedom.  In this example, that action is destabilizing rather than selecting.

\subsection{Noncommuting limits and metastability}

The zero-temperature analysis in \cref{sec:zero-temperature} makes one side of the comparison rigorous.  The endpoint map $\mathcal E_N$ is defined by the infinite-time limit of the unregularized flow whenever that limit exists.  By \cref{prop:zero-temp-rank-one}, a full-rank initial condition with an endpoint in $\mathcal Z$ always satisfies $\sigma_2(\mathcal E_N(W_0))>0$.  Nevertheless, on the symmetric invariant plane and on the nonsymmetric equal-diagonal family of \cref{thm:zero-temp-equal-diagonal}, scaled full-rank initializations $W_0^{\eps}=\eps\widehat W_0$ satisfy $\mathcal E_N(W_0^{\eps})\to J$ or $DJD$ as $\eps\to0$.  For $N>2$ the smaller terminal singular value is asymptotic to the explicit positive multiples of $\eps$ in \cref{prop:zero-temp-symmetric-plane,thm:zero-temp-equal-diagonal}, while at $N=2$ the positive-determinant symmetric family retains a nondegenerate rank-two limit and, in the notation of \cref{prop:zero-temp-symmetric-plane}, the mixed-sign family satisfies $\sigma_2(\mathcal E_2(W_0^{\eps}))=(ab/2)\eps^2+O(\eps^4)$.

This is compatible with the finite-temperature theorem because the two procedures ask different long-time questions.  Taking the zero-temperature system first and then sending $t\to\infty$ produces a finite completion whose subsequent small-initialization limit may be rank one.  At any fixed finite $\beta$, by contrast, almost every orbit is unbounded.  Thus low-rank selection by ordinary training and eventual entropy-driven escape are not competing descriptions of the same fixed dynamical system.

A genuine metastability theorem remains open.  For large finite $\beta$, one expects a trajectory first to shadow the unregularized flow, enter a neighborhood of the completion selected at $\beta=\infty$, and only later depart along an entropy-driven direction.  Such a theorem should quantify
\[
T_{\mathrm{escape}}(\beta,W_0,N,d)
\]
and relate it to the initialization scale.  The rank-boundary proof suggests that the closest approach to a corank-one state should depend sensitively on $\beta$.  At corank at least two, the logarithmic barrier may permit exponentially small singular values before the entropy term becomes dynamically decisive.

\subsection{The remaining zero-temperature width-two problem}

The exact calculations in \cref{sec:zero-temperature} do not yet determine the pushforward of Gaussian end-to-end initialization under the endpoint map $\mathcal E_N$.  The positive-determinant SVD system identifies two stable angular mechanisms: unequal diagonal entries are driven toward $\theta=\pi/4$, and the mismatch of the left and right singular frames is damped while the observed loss is decreasing.  The feedback of the first angular displacement into the singular-value competition begins quadratically.  These facts explain why the rank-one limit persists on the invariant families treated above, but they do not yet give a global basin theorem.

A complete solution requires a global analysis of the angular phase portrait, a corresponding singular-vector chart with one signed singular value for the negative-determinant component, and a description of the separatrices between the branches tending toward $J$ and $DJD$.  The final probabilistic step is then to determine the law of $\mathcal E_N(\eps G)$ for a Gaussian matrix $G$, including its conditional behavior on the two determinant components.  At depth two and positive determinant, this should identify the nondegenerate law on the minimal-nuclear-norm segment.  At depths $N>2$, it should determine the probabilities and rates of concentration near the two rank-one completions.

\subsection{What generic unboundedness does not yet say}

The conclusion
\[
\sup_{t\geq0}\normF{W(t)}=\infty
\]
does not determine an escape rate or asymptotic direction.  It also does not prove that the norm tends to infinity without large returns.  Natural next questions are:

\begin{enumerate}
\item Does $\normF{W(t)}\to\infty$ for almost every initial condition?
\item Does the completion loss satisfy $E_d(W(t))\to0$ along generic escaping trajectories?
\item Does $W(t)/\normF{W(t)}$ converge, and if so, what ranks and sign patterns can occur?
\item Can one classify singular escape at infinity, where some singular values diverge and others vanish?
\end{enumerate}

\subsection{The finite-temperature depth-two problem}

The depth-two results in \cref{sec:zero-temperature} concern the unregularized system.  At finite temperature, unboundedness of the free energy remains true in every width, but the equilibrium and rank-boundary arguments used in the main theorem change.  For $d\geq3$, the entropy is still strictly concave in singular-value coordinates, while its derivative at a one-zero boundary point is finite and the energy and entropy terms have the same order in the smallest singular value.  Consequently, the proof of existence in every sign chamber and the universal boundary repulsion both require modification.  Width two is additionally exceptional because the singular-value Hessian has a kernel and critical manifolds appear.

A complete finite-temperature depth-two classification in arbitrary width is therefore a separate problem.  It may involve temperature-dependent loss of interior equilibria through the corank-one boundary.

\subsection{Adding confinement and recovering a selection problem}

The general criteria of \cref{sec:general-energy} identify a hierarchy of hypotheses.  To make $F_\beta$ merely bounded below, the energy must dominate the logarithmic entropy growth.  To prevent escape in norm, strict domination is needed.  To guarantee an actual minimizer inside the open manifold $\GLd$, one must additionally prevent minimizing sequences from approaching the corank-one boundary, where the full-rank entropy has a finite limit.

A coercive penalty such as
\[
E_{d,\lambda}(W)
=
E_d(W)+\frac\lambda2\normF{W}^2,
\qquad \lambda>0,
\]
is more than sufficient at infinity: the quadratic term dominates the logarithmic entropy for every finite $\beta$.  More generally, \cref{cor:norm-growth-criteria} shows that any superlogarithmically coercive energy confines the free energy in norm.  The ridge term does not itself create a barrier at rank loss, however, so it does not by itself guarantee that the infimum is attained in $\GLd$.  One may instead work on a closed full-rank region such as $\{\sigma_d\geq\eta\}$, add a barrier at corank one, or formulate compatible entropies on all rank strata.

Once compactness and attainment are available, the usual low-temperature mechanism becomes rigorous.  Proposition~\ref{prop:low-temp-selection} shows that every limit point of minimizers of $E-\beta^{-1}S_N$ as $\beta\to\infty$ maximizes $S_N$ on the minimizing set of $E$.  Thus entropy can genuinely select among degenerate minimizers when that set is compact, the entropy is continuous there, and the free-energy minimizers remain precompact.  The failure for diagonal completion is not a failure of this variational argument.  It is the failure of its compactness hypotheses: the minimizing set is noncompact and $S_N$ has no finite maximizer on it.


\section{Proof of the main theorem}

\begin{proof}[Proof of \cref{thm:main}]
Part (i) is \cref{thm:unbounded}.  Part (ii) is \cref{thm:critical-diagonal,thm:2d-critical}.  Part (iii) is \cref{thm:Morse-index,cor:hyperbolic}.  Part (iv) is \cref{cor:no-singular-limit}.  Part (v) is \cref{thm:forward-complete}.  Part (vi) follows from \cref{lem:bounded-converges,thm:generic-escape}.
\end{proof}

\appendix

\section{A direct derivation of the off-diagonal Hessian block}\label{app:hessian}

For completeness, we give the $2\times2$ calculation behind \eqref{eq:hessian-offdiag}.  Let
\[
X=\begin{pmatrix}x&0\\0&y\end{pmatrix},
\qquad
H=\begin{pmatrix}0&a\\b&0\end{pmatrix},
\qquad x\neq y>0.
\]
Then
\[
(X+tH)^T(X+tH)
=
\begin{pmatrix}
 x^2+t^2b^2&t(xa+yb)\\
 t(xa+yb)&y^2+t^2a^2
\end{pmatrix}.
\]
Second-order eigenvalue perturbation gives
\begin{align*}
\sigma_x''(0)
&=
\frac1x\left(
 b^2+\frac{(xa+yb)^2}{x^2-y^2}
\right),\\
\sigma_y''(0)
&=
\frac1y\left(
 a^2-\frac{(xa+yb)^2}{x^2-y^2}
\right).
\end{align*}
If $g_x=\partial_xf$ and $g_y=\partial_yf$, the first singular-value derivatives vanish, so
\[
D^2(f\circ\sigma)(X)[H,H]
=g_x\sigma_x''(0)+g_y\sigma_y''(0).
\]
Algebra gives
\begin{align*}
D^2(f\circ\sigma)(X)[H,H]
&=
\frac12\frac{g_x-g_y}{x-y}(a+b)^2\\
&\quad+
\frac12\frac{g_x+g_y}{x+y}(a-b)^2.
\end{align*}
Passing from $X$ to a signed diagonal matrix replaces $(a,b)$ by $(\epsilon_i a,\epsilon_jb)$, which is \eqref{eq:hessian-offdiag}.

\section{Uniform corank-one repulsion on bounded sets}\label{app:uniform-repulsion}

We record the quantitative estimate used in \cref{prop:bounded-precompact}.  Suppose
\[
m\leq\sigma_j\leq M,
\qquad j<d,
\qquad
0<b=\sigma_d<m/2.
\]
From \eqref{eq:boundary-drift-exact},
\begin{align*}
N b^{2-2/N}g_d
&=
\sum_{j<d}
\left[
\frac{b}{\sigma_j^{2/N}-b^{2/N}}
-
\frac{N b^{3-2/N}}{\sigma_j^2-b^2}
\right]\\
&\geq
\frac{d-1}{M^{2/N}}b
-C_1b^{1+2/N}-C_2b^{3-2/N}.
\end{align*}
Also,
\[
-N e_d b^{2-2/N}\geq-C_3b^{2-2/N}
\]
on a bounded set.  Since every exponent in the three error terms is strictly greater than $1$ when $N>2$, there is $\delta>0$ such that
\[
\dot b\geq\frac{d-1}{2\beta M^{2/N}}b
\qquad
\text{whenever }0<b\leq\delta.
\]
This is the outward barrier \eqref{eq:outward-barrier}.

\bibliographystyle{amsalpha}
\bibliography{biblio}

@inproceedings{BaiZhaoZhang2024,
  author    = {Bai, Zhiwei and Zhao, Jiajie and Zhang, Yaoyu},
  title     = {Connectivity shapes implicit regularization in matrix factorization models for matrix completion},
  booktitle = {Advances in Neural Information Processing Systems 37},
  year      = {2024},
}

@inproceedings{ShinYun2026,
  author    = {Shin, Baekrok and Yun, Chulhee},
  title     = {Implicit bias and loss of plasticity in matrix completion: Depth promotes low-rankness},
  booktitle = {International Conference on Learning Representations},
  year      = {2026},
  note      = {arXiv:2603.04703},
}

@inproceedings{AroraCohenHazan2018,
  author    = {Arora, Sanjeev and Cohen, Nadav and Hazan, Elad},
  title     = {On the Optimization of Deep Networks: Implicit Acceleration by Overparameterization},
  booktitle = {Proceedings of the 35th International Conference on Machine Learning},
  series    = {Proceedings of Machine Learning Research},
  volume    = {80},
  pages     = {244--253},
  year      = {2018},
  url       = {https://proceedings.mlr.press/v80/arora18a.html}
}

@inproceedings{AroraCohenHuLuo2019,
  author    = {Arora, Sanjeev and Cohen, Nadav and Hu, Wei and Luo, Yuping},
  title     = {Implicit Regularization in Deep Matrix Factorization},
  booktitle = {Advances in Neural Information Processing Systems 32},
  pages     = {7411--7422},
  year      = {2019},
  url       = {https://proceedings.neurips.cc/paper/2019/hash/c0c783b5fc0d7d808f1d14a6e9c8280d-Abstract.html}
}

@article{BahEtAl2022,
  author  = {Bah, Bubacarr and Rauhut, Holger and Terstiege, Ulrich and Westdickenberg, Michael},
  title   = {Learning Deep Linear Neural Networks: {R}iemannian Gradient Flows and Convergence to Global Minimizers},
  journal = {Information and Inference: A Journal of the IMA},
  volume  = {11},
  number  = {1},
  pages   = {307--353},
  year    = {2022},
  doi     = {10.1093/imaiai/iaaa039}
}

@misc{ChenKotwalMenon2025,
  author        = {Chen, Alan and Kotwal, Tejas S. and Menon, Govind},
  title         = {Entropic Regularization in the Deep Linear Network},
  year          = {2025},
  eprint        = {2512.06137},
  note        = {arxiv: 2512.06137},
  archiveprefix = {arXiv},
  primaryclass  = {cs.NE},
  url           = {https://arxiv.org/abs/2512.06137}
}

@article{CohenMenonVeraszto2023,
  author  = {Cohen, Nadav and Menon, Govind and Veraszto, Zsolt},
  title   = {Deep Linear Networks for Matrix Completion---an Infinite Depth Limit},
  journal = {SIAM Journal on Applied Dynamical Systems},
  volume  = {22},
  number  = {4},
  pages   = {3208--3232},
  year    = {2023},
  doi     = {10.1137/22M1530653}
}

@misc{DrusvyatskiyKempton2015,
  author        = {Drusvyatskiy, Dmitriy and Kempton, Courtney},
  title         = {Variational Analysis of Spectral Functions Simplified},
  year          = {2015},
  eprint        = {1506.05170},
  note        = {arxiv:1506.05170},
  archiveprefix = {arXiv},
  primaryclass  = {math.OC},
  url           = {https://arxiv.org/abs/1506.05170}
}

@inproceedings{GunasekarEtAl2017,
  author    = {Gunasekar, Suriya and Woodworth, Blake and Bhojanapalli, Srinadh and Neyshabur, Behnam and Srebro, Nathan},
  title     = {Implicit Regularization in Matrix Factorization},
  booktitle = {Advances in Neural Information Processing Systems 30},
  year      = {2017},
  url       = {https://proceedings.neurips.cc/paper/2017/hash/58191d2a914c6dae66371c9dcdc91b41-Abstract.html}
}

@book{HirschPughShub1977,
  author    = {Hirsch, Morris W. and Pugh, Charles C. and Shub, Michael},
  title     = {Invariant Manifolds},
  series    = {Lecture Notes in Mathematics},
  volume    = {583},
  publisher = {Springer-Verlag},
  address   = {Berlin},
  year      = {1977},
  doi       = {10.1007/BFb0092042}
}

@article{Lewis1996,
  author  = {Lewis, Adrian S.},
  title   = {Derivatives of Spectral Functions},
  journal = {Mathematics of Operations Research},
  volume  = {21},
  number  = {3},
  pages   = {576--588},
  year    = {1996},
  doi     = {10.1287/moor.21.3.576}
}

@article{LewisSendov2001,
  author  = {Lewis, Adrian S. and Sendov, Hristo S.},
  title   = {Twice Differentiable Spectral Functions},
  journal = {SIAM Journal on Matrix Analysis and Applications},
  volume  = {23},
  number  = {2},
  pages   = {368--386},
  year    = {2001},
  doi     = {10.1137/S089547980036838X}
}

@incollection{Menon2025,
  author    = {Menon, Govind},
  title     = {The Geometry of the Deep Linear Network},
  booktitle = {XIV Symposium on Probability and Stochastic Processes},
  series    = {Progress in Probability},
  publisher = {Birkh{\"a}user},
  year      = {2025},
  pages     = {1--47},
  note      = {arXiv:2411.09004},
  url       = {https://arxiv.org/abs/2411.09004}
}

@misc{MenonYu2026,
  author        = {Menon, Govind and Yu, Tianmin},
  title         = {An Entropy Formula for the Deep Linear Network},
  year          = {2025},
  eprint        = {2509.09088},
  archiveprefix = {arXiv},
  primaryclass  = {cs.LG},
  note          = {to appear in SIAM Journal on Mathematical Analysis},
  url           = {https://arxiv.org/abs/2509.09088}
}

@article{AlbertAnderson1984,
  author  = {Albert, A. and Anderson, J. A.},
  title   = {On the Existence of Maximum Likelihood Estimates in Logistic Regression Models},
  journal = {Biometrika},
  volume  = {71},
  number  = {1},
  pages   = {1--10},
  year    = {1984},
  doi     = {10.1093/biomet/71.1.1}
}

@article{CandesRecht2009,
  author  = {Cand\`es, Emmanuel J. and Recht, Benjamin},
  title   = {Exact Matrix Completion via Convex Optimization},
  journal = {Foundations of Computational Mathematics},
  volume  = {9},
  number  = {6},
  pages   = {717--772},
  year    = {2009},
  doi     = {10.1007/s10208-009-9045-5}
}

@article{FriedmanHastieTibshirani2008,
  author  = {Friedman, Jerome and Hastie, Trevor and Tibshirani, Robert},
  title   = {Sparse Inverse Covariance Estimation with the Graphical Lasso},
  journal = {Biostatistics},
  volume  = {9},
  number  = {3},
  pages   = {432--441},
  year    = {2008},
  doi     = {10.1093/biostatistics/kxm045}
}

@article{FriedmanHastieTibshirani2010,
  author  = {Friedman, Jerome and Hastie, Trevor and Tibshirani, Robert},
  title   = {Regularization Paths for Generalized Linear Models via Coordinate Descent},
  journal = {Journal of Statistical Software},
  volume  = {33},
  number  = {1},
  pages   = {1--22},
  year    = {2010},
  doi     = {10.18637/jss.v033.i01}
}

@article{Huber1964,
  author  = {Huber, Peter J.},
  title   = {Robust Estimation of a Location Parameter},
  journal = {The Annals of Mathematical Statistics},
  volume  = {35},
  number  = {1},
  pages   = {73--101},
  year    = {1964},
  doi     = {10.1214/aoms/1177703732}
}

@article{RechtFazelParrilo2010,
  author  = {Recht, Benjamin and Fazel, Maryam and Parrilo, Pablo A.},
  title   = {Guaranteed Minimum-Rank Solutions of Linear Matrix Equations via Nuclear Norm Minimization},
  journal = {SIAM Review},
  volume  = {52},
  number  = {3},
  pages   = {471--501},
  year    = {2010},
  doi     = {10.1137/070697835}
}

@article{SoudryEtAl2018,
  author  = {Soudry, Daniel and Hoffer, Elad and Nacson, Mor Shpigel and Gunasekar, Suriya and Srebro, Nathan},
  title   = {The Implicit Bias of Gradient Descent on Separable Data},
  journal = {Journal of Machine Learning Research},
  volume  = {19},
  number  = {70},
  pages   = {1--57},
  year    = {2018},
  url     = {https://jmlr.org/papers/v19/18-188.html}
}

@article{GavishDonoho2017,
  author  = {Gavish, Matan and Donoho, David L.},
  title   = {Optimal Shrinkage of Singular Values},
  journal = {IEEE Transactions on Information Theory},
  volume  = {63},
  number  = {4},
  pages   = {2137--2152},
  year    = {2017},
  doi     = {10.1109/TIT.2017.2653801}
}

@article{CandesPlan2011,
  author  = {Cand\`es, Emmanuel J. and Plan, Yaniv},
  title   = {Tight Oracle Inequalities for Low-Rank Matrix Recovery from a Minimal Number of Noisy Random Measurements},
  journal = {IEEE Transactions on Information Theory},
  volume  = {57},
  number  = {4},
  pages   = {2342--2359},
  year    = {2011},
  doi     = {10.1109/TIT.2011.2111771}
}

@article{HoerlKennard1970,
  author  = {Hoerl, Arthur E. and Kennard, Robert W.},
  title   = {Ridge Regression: Biased Estimation for Nonorthogonal Problems},
  journal = {Technometrics},
  volume  = {12},
  number  = {1},
  pages   = {55--67},
  year    = {1970},
  doi     = {10.1080/00401706.1970.10488634}
}

@article{KoltchinskiiLouniciTsybakov2011,
  author  = {Koltchinskii, Vladimir and Lounici, Karim and Tsybakov, Alexandre B.},
  title   = {Nuclear-Norm Penalization and Optimal Rates for Noisy Low-Rank Matrix Completion},
  journal = {The Annals of Statistics},
  volume  = {39},
  number  = {5},
  pages   = {2302--2329},
  year    = {2011},
  doi     = {10.1214/11-AOS894}
}

@article{CortesVapnik1995,
  author  = {Cortes, Corinna and Vapnik, Vladimir},
  title   = {Support-Vector Networks},
  journal = {Machine Learning},
  volume  = {20},
  number  = {3},
  pages   = {273--297},
  year    = {1995},
  doi     = {10.1007/BF00994018}
}

@article{LangeLittleTaylor1989,
  author  = {Lange, Kenneth L. and Little, Roderick J. A. and Taylor, Jeremy M. G.},
  title   = {Robust Statistical Modeling Using the {$t$} Distribution},
  journal = {Journal of the American Statistical Association},
  volume  = {84},
  number  = {408},
  pages   = {881--896},
  year    = {1989},
  doi     = {10.1080/01621459.1989.10478852}
}

@article{BeatonTukey1974,
  author  = {Beaton, Albert E. and Tukey, John W.},
  title   = {The Fitting of Power Series, Meaning Polynomials, Illustrated on Band-Spectroscopic Data},
  journal = {Technometrics},
  volume  = {16},
  number  = {2},
  pages   = {147--185},
  year    = {1974},
  doi     = {10.1080/00401706.1974.10489171}
}

@inproceedings{GiampourasEtAl2020,
  author    = {Giampouras, Paris and Vidal, Ren\'e and Rontogiannis, Athanasios A. and Haeffele, Benjamin D.},
  title     = {A Novel Variational Form of the {S}chatten-{$p$} Quasi-Norm},
  booktitle = {Advances in Neural Information Processing Systems 33},
  pages     = {21453--21463},
  year      = {2020},
  url       = {https://proceedings.neurips.cc/paper/2020/hash/f53eb4122d5e2ce81a12093f8f9ce922-Abstract.html}
}

@inproceedings{KobayashiAkramVonOswald2024,
  author    = {Kobayashi, Seijin and Akram, Yassir and von Oswald, Johannes},
  title     = {Weight Decay Induces Low-Rank Attention Layers},
  booktitle = {Advances in Neural Information Processing Systems 37},
  pages     = {4481--4510},
  year      = {2024},
  doi       = {10.52202/079017-0146}
}

\end{document}